\theoremstyle{plain}
\newtheorem{prop}{Proposition}[section]
\newtheorem{coro}[prop]{Corollary}
\newtheorem{lemm}[prop]{Lemma}
\theoremstyle{definition}
\newtheorem{conv}[prop]{Convention}
\newtheorem{defi}[prop]{Definition}
\newtheorem{exam}[prop]{Example}
\newtheorem{rema}[prop]{Remark}
\numberwithin{equation}{section}
\def\BPoint(#1,#2,#3){\cnode[style=thin,fillcolor=black,fillstyle=solid](#1,#2){0.5}{#3}}
\def\GPoint(#1,#2,#3){\cnode[style=thin,fillcolor=lightgray,fillstyle=solid](#1,#2){0.5}{#3}}
\def\WPoint(#1,#2,#3){\cnode[style=thin,fillcolor=white,fillstyle=solid](#1,#2){0.5}{#3}}
\def\AArrow(#1,#2){\ncline[nodesep=0.5mm, linewidth=0.8pt, border=1.2pt]{->}{#1}{#2}}
\def\BArrow(#1,#2){\ncline[linecolor=blue, linewidth=1.2pt, linestyle=dotted, nodesep=0.5mm, border=1.2pt]{->}{#1}{#2}}
\def\CArrow(#1,#2){\ncline[linecolor=red, nodesep=0.5mm, linewidth=0.8pt, border=1.2pt]{->}{#1}{#2}}
\def\DArrow(#1,#2){\ncline[linecolor=red,style=thickexist,nodesep=0.5mm,border=2pt]{->}{#1}{#2}}
\def\Arrow(#1,#2){\ncline[nodesep=1mm,border=2pt]{->}{#1}{#2}}
\def\ArrowE(#1,#2){\ncline[nodesep=1mm,border=2pt,style=exist]{->}{#1}{#2}}
\def\ETC(#1,#2){\ncline[nodesep=2mm,border=2pt,style=etc]{#1}{#2}}
\def\Point(#1,#2,#3){\pnode(#1,#2){#3}}
\def\PointN(#1,#2,#3,#4){\pnode(#1,#2){#3}{#4}}
\renewcommand\aa{a}
\newcommand\bb{b}
\newcommand\BB{B}
\newcommand\BP[1]{B^{\scriptscriptstyle+}_{#1}}
\newcommand\card[1]{\mathtt{\#}{#1}}
\newcommand\cc{c}
\newcommand\comp{\mathbin{\scriptscriptstyle\circ}}
\newcommand\dd{d}
\newcommand\Deltaz{\Delta\refer}
\newcommand\Div{\mathrm{Div}}
\newcommand\dive{\mathbin{\preccurlyeq}}
\newcommand\Double{\Psi}
\newcommand\ee{e}
\newcommand\equivz{\mathrel{\equiv\refer}}
\newcommand\ew{\varepsilon}
\newcommand\ff{f}
\renewcommand\ge{\geqslant\nobreak}
\renewcommand\gg{g}
\newcommand\GG{G}
\newcommand\gga{a}
\newcommand\GGb{\overline\GG}
\newcommand\GL{\mathrm{GL}}
\newcommand\hh{h}
\newcommand\hha{b}
\newcommand\HS[1]{\hspace{#1ex}}
\newcommand\ID{\mathrm{id}}
\newcommand\ii{i}
\newcommand\II{I}
\newcommand\inv{^{-1}}
\newcounter{ITEM}
\newcommand\ITEM[1]{\setcounter{ITEM}{#1}\leavevmode\hbox{\rm(\roman{ITEM})}}
\newcommand\jj{j}
\newcommand\JJ{J}
\newcommand\kk{k}
\newcommand\LCM[2]{\mathrm{lcm}(#1,#2)}
\renewcommand\le{\leqslant\nobreak}
\newcommand\leSb{\mathrel{\le_{\HS{-0.7}\SSb}}}
\newcommand\LGG[2]{\Vert#2\Vert_{\!_{#1}}}
\newcommand\linrep{\Theta}
\newcommand\linrepz{\Theta_{\HS{-0.2}\scriptscriptstyle\bullet}}
\newcommand\mm{m}
\newcommand\MM{M}
\newcommand\MMb{M_{\HS{-0.2}\scriptscriptstyle\bullet}}
\newcommand\Nb[2]{\mathtt{\#}_{#1}(#2)}
\newcommand\nn{n}
\newcommand\NN{N}
\newcommand\NNNN{\mathbb{N}}
\newcommand\nub{\overline\nu}
\newcommand\op{*}
\newcommand\OP{\mathbin{\scriptstyle\bullet}}
\newcommand\opl{\HS{0.2}\rceil\HS{0.1}}
\newcommand\opL{\mathbin{\tilde\op}}
\newcommand\opp{^{\scriptscriptstyle\mathrm{opp}}}
\newcommand\opr{\HS{0.1}\lceil\HS{0.2}}
\newcommand\pdots{\hspace{0.2ex}{\cdot}{\cdot}{\cdot}\hspace{0.2ex}}
\newcommand\piz{\pi\refer}
\newcommand\pp{p}
\newcommand\PP{P}
\newcommand\Pol{\Omega}
\newcommand\Polt{\widetilde\Pol}
\newcommand\Pres[2]{(#1, #2)}
\newcommand\PRES[2]{\langle#1\,\vert\, #2\rangle}
\newcommand\PRESp[2]{\langle#1\,\vert\, #2\rangle^{\scriptscriptstyle\!+}\!}\newcommand\Pw{\mathfrak{P}}
\newcommand\Pwfin{\Pw_{\HS{-0.3}f\HS{-0.2}i\HS{-0.2}n\HS{-0.3}}}
\newcommand\qq{q}
\newcommand\QQQQ{\mathbb{Q}}
\def\resp{\mbox{\it resp}.\ }
\newcommand\RC{\theta}
\newcommand\refer{_{\scriptscriptstyle\bullet}}
\newcommand\rr{r}
\newcommand\RR{R}
\newcommand\RRRR{\mathbb{R}}
\newcommand\sep{\HS{0.05}\vert\HS{0.05}}
\newcommand\Seq[2]{#1^{[#2]}}
\newcommand\sig[1]{\sigma_{\hspace{-0.2ex}#1}^{\null}}
\newcommand\sigg[2]{\sigma_{#1}^{#2}}
\newcommand\sigmaz{\sigma\refer}
\newcommand\sol{\rho}
\newcommand\SPol{\Pi}
\newcommand\SPolt{\widetilde\SPol}
\renewcommand\ss{s}
\renewcommand\SS{S}
\newcommand\SSb{\overline\SS}
\newcommand\sst{\widetilde\ss}
\newcommand\Sym{\mathfrak{S}}
\renewcommand\tt{t}
\newcommand\tta{\mathtt{a}}
\newcommand\ttb{\mathtt{b}}
\newcommand\ttc{\mathtt{c}}
\newcommand\tts{\mathtt{s}}
\newcommand\under{\backslash}
\newcommand\uu{u}
\newcommand\vv{v}
\newcommand\VV{V}
\def\VR(#1,#2){\vrule width0pt height#1mm depth#2mm}
\newcommand\wdots{, ...\hspace{0.2ex},}
\newcommand\WW{W}
\newcommand\xx{x}
\newcommand\yy{y}
\newcommand\Zd{\ZZZZ{/}\dd\ZZZZ}
\newcommand\zz{z}
\newcommand\ZZZZ{\mathbb{Z}}
\begin{document}

\title[Yang--Baxter equation, RC-calculus, and Garside germs]{Set-theoretic solutions of the Yang--Baxter equation, RC-calculus, and Garside germs}

\author{Patrick DEHORNOY}
\address{Laboratoire de Math\'ematiques Nicolas Oresme, UMR 6139 CNRS, Universit\'e de Caen BP 5186, 14032 Caen Cedex, France}
\curraddr{\sc Laboratoire Preuves, Programmes, Syst\`emes, UMR 7126 CNRS, Universit\'e Paris-Diderot Case 7014, 75205 Paris Cedex 13, France}
\email{dehornoy@math.unicaen.fr}
\urladdr{//www.math.unicaen.fr/\!\hbox{$\sim$}dehornoy}

\keywords{Yang--Baxter equation, set-theoretic solution, Garside monoid, Garside group, monoid of $I$-type, right-cyclic law, RC-quasigroup, birack}

\subjclass[2010]{20F38, 20N02, 20M10, 20F55, 06F05, 16T25}

\maketitle

\begin{abstract}
Building on a result by W.\,Rump, we show how to exploit the right-cyclic law $(\xx \yy) (\xx \zz) = (\yy \xx) (\yy \zz)$ in order to investigate the structure groups and monoids attached with (involutive nondegenerate) set-theoretic solutions of the Yang--Baxter equation. We develop a sort of right-cyclic calculus, and use it to obtain short proofs for the existence both of the Garside structure and of the $I$-structure of such groups. We describe finite quotients that play for the considered groups the role that Coxeter groups play for Artin--Tits groups. 
\end{abstract}

The Yang--Baxter equation (YBE) is a fundamental equation occurring in integrable models in statistical mechanics and quantum field theory~\cite{Jim}. Among its many solutions, some simple ones called set-theoretic turn out to be connected with several interesting algebraic structures. In particular, a group and a monoid are attached with every set-theoretic solution of~YBE~\cite{Eti}, and the family of all groups and monoids arising in this way is known to have rich properties: as shown by T.\,Gateva--Ivanova and M.\,Van den Bergh in~\cite{Gav} and by E.\,Jespers and J.\,Okni\'nski in~\cite{JeO}, they admit an $I$-structure, meaning that their Cayley graph is isometric to that of a free Abelian group, and, as shown by F.\,Chouraqui in~\cite{Cho}, they admit a Garside structure, (roughly) meaning that they are groups of fractions of monoids in which divisibility relations are lattice orders. 

It was shown by W.\,Rump in~\cite{RumYB} that (involutive nondegenerate) set-theoretic solutions of YBE are in one-to-one correspondence with algebraic structures consisting of a set equipped with a binary operation~$\op$ that obeys the right-cyclic law $(\xx\yy)(\xx\zz) = (\yy\xx)(\yy\zz)$ and has bijective left-translations. In this paper, we merge the ideas stemming from the right-cyclic law (RC-law) and those coming from Garside theory to give easy alternative proofs of earlier results and derive new results. The key technical point is the connection between the RC-law and the least common right-multiple operation. A nice point is that one never needs to restrict to squarefree solutions of YBE, that is, those satisfying $\sol(\ss, \ss) = (\ss, \ss)$. 

The main benefit of the current approach is to provide a simple and complete solution to the problem of finding a \emph{Garside germ} for every group associated with a set-theoretic solution of YBE, namely finding a finite quotient of the group that encodes the whole structure in the way a finite Coxeter group encodes the associated Artin--Tits group. The precise statement (Proposition~\ref{P:Cox}) says that, if $(\SS, \op)$ is an RC-quasigroup with cardinality~$\nn$ and class~$\dd$ (a certain numerical parameter attached with every finite RC-quasigroup), then starting from the canonical presentation of the associated group and adding the RC-torsion relations~$\ss^{[\dd]} =\nobreak 1$ with~$\ss$ in~$\SS$ (where $\ss^{[\dd]}$ is a sort of twisted $\dd$th power) provides a finite group~$\GGb$ of order~$\dd^\nn$ from which the Garside structure of~$\GG$ can be retrieved. Partial results corresponding to class~$2$, namely RC-quasigroups satisfying $(\xx \xx) (\xx \yy) = \yy$, are established by hand in~\cite{ChG}. Our current approach based on RC-calculus and the $I$-structure enables one to address the general case directly. 

The above finite ``Coxeter-like'' group~$\GGb$ does not coincide (in general) with the finite quotient~$G^0_{\!X}$ considered in~\cite{Eti} and called involutive Yang--Baxter group (IYB) in~\cite{CeJeRiYB, CeJeOkYB}: the latter is a (proper) quotient of the group~$\GGb$ (see Remark~\ref{R:IYB}\ITEM2) and, contrary to~$\GGb$, it does not fully encode the situation since many different solutions may be associated with the same IYB group~\cite{CeJeRiYB}. 

No exhaustive description of Coxeter-like groups is known so far, but it is easy to characterize them as those finite groups that admit a ``modular $I$-structure'', namely the counterpart of an $I$-structure where the free Abelian group~$\ZZZZ$ is replaced with a cyclic group~$\Zd$ (Proposition~\ref{P:CharCox}), a special case of the notion of IG-structure considered in~\cite{GoJe}. If $\GGb$ is associated with a cardinal~$\nn$ RC-quasigroup, its Cayley graph is naturally drawn on an $\nn$-torus, and $\GGb$ can be realized as a group of isometries in an $\nn$-dimensional Hermitian space (Corollary~\ref{C:Isometries}). We hope that further properties will be discovered soon. 

The paper is organized as follows. In Section~\ref{S:Several}, we recall the connection between set-theoretic solutions of the Yang--Baxter equation and algebraic systems that obey the RC-law and introduce the derived structure monoids and groups. In Section~\ref{S:RCCalculus}, we establish various consequences of the RC-law which altogether make a sort of right-cyclic calculus. In Section~\ref{S:StrMonoid}, this calculus is used to investigate the divisibility relations of the monoids associated with RC-quasigroups and their Garside structure. Then, in Section~\ref{S:IStructure} (which is mostly independent from Section~\ref{S:StrMonoid}), we use the RC-calculus to similarly investigate the $I$-structure. Finally, in Section~\ref{S:Coxeter}, we merge the results to construct a finite quotient that encodes the whole structure and give several descriptions of this ``Coxeter-like'' group, in particular as a group of isometries of an Hermitian space. 

\section*{Acknowledgments}
The author wishes to thank Tatiana Gateva-Ivanova, Eddy Godelle, Eric Jespers, Gilbert Levitt, Jan Okni\'nski, and, specially, Wolfgang Rump for discussions about the content of this paper. 

\section{Several equivalent frameworks}
\label{S:Several}

In this introductory section, we recall the definition of set-theoretic solutions of the Yang--Baxter equation~\cite{Eti} and their connection with what we shall call \emph{RC-quasigroups}, which are sets equipped with a binary operation obeying the right-cyclic law $(\xx \op \yy) \op (\xx \op \zz) = (\yy \op \xx) \op (\yy \op \zz)$, as established by W.\,Rump in~\cite{RumYB}.

\begin{defi}\cite{Dri, Eti}
\label{D:SetTheor}
A \emph{set-theoretic solution of YBE} (or \emph{braided quadratic set}) 
is a pair~$(\SS, \sol)$ where $\SS$ is a set and $\sol$ is a bijection of~$\SS \times \SS$ into itself that satisfies
\begin{equation}
\label{E:SetTheor}
\sol^{12} \sol^{23} \sol^{12} = \sol^{23} \sol^{12} \sol^{23}.
\end{equation}
where $\sol^{ij}$ is the map of~$\SS^3$ to itself obtained when~$\sol$ acts on the $\ii$th and~$\jj$th entries. 
\end{defi}

If $(\SS, \sol)$ is a set-theoretic solution of YBE and $\VV$ is a vector space based on~$\SS$, then the (unique) linear operator~$\RR$ on~$\VV \otimes \VV$ that extends~$\sol$ is a solution of the (non-parametric, braid form of) the  Yang--Baxter equation $\RR^{12} \RR^{23} \RR^{12} = \RR^{23} \RR^{12} \RR^{23}$, and, conversely, every solution of YBE such that there exists a basis~$\SS$ of the ambient vector space such that $\SS^{\otimes 2}$ is globally preserved is of this type. 

A set-theoretic solution $(\SS, \sol)$ of YBE is called \emph{nondegenerate} if, writing $\sol_1(\ss, \tt)$ and $\sol_2(\ss, \tt)$ for the first and second entries of~$\sol(\ss, \tt)$, the left-translation $\yy\mapsto \sol_1(\ss,\yy)$ is one-to-one for every~$\ss$ in~$\SS$ and the right-translation $\xx\mapsto \sol_2(\xx,\tt)$ is one-to-one for every~$\tt$ in~$\SS$. A solution $(\SS,\sol)$ is called \emph{involutive} if $\sol\comp\sol$ is the identity of~$\SS \times \SS$. 

A map from~$\SS \times \SS$ to itself is a pair of maps from~$\SS \times \SS$ to~$\SS$, hence a pair of binary operations on~$\SS$. Translating into the language of binary operations the constraints that define set-theoretic solutions of YBE is straightforward.

\begin{lemm}
\label{L:SolRack}
Define a \emph{birack} to be an algebraic system~$(\SS, \opl, \opr)$ consisting of a set~$\SS$ equipped with two binary operations~$\opl$ and~$\opr$ that satisfy
\begin{gather}
\label{E:Rack1}
(\aa\opl\bb) \opl ((\aa\opr\bb) \opl \cc) = \aa \opl (\bb\opl\cc),\\
\label{E:Rack2}
(\aa\opl\bb) \opr ((\aa\opr\bb) \opl \cc) = (\aa \opr (\bb\opl\cc)) \opl (\bb\opr\cc),\\
\label{E:Rack3}
(\aa\opr\bb) \opr \cc = (\aa \opr (\bb\opl\cc)) \opr (\bb\opr\cc),
\end{gather}
and are such that the left-translations of~$\opl$ and the right-translations of~$\opr$ are one-to-one, and call a birack \emph{involutive} if it satisfies in addition
\begin{equation}
\label{E:Rack4}
(\aa\opl\bb) \opl (\aa\opr\bb) = \aa \text{\quad and \quad} (\aa\opl\bb) \opr (\aa\opr\bb) = \bb.
\end{equation}

\ITEM1 If $(\SS, \sol)$ is a nondegenerate set-theoretic solution of YBE, then defining $\aa \opl \bb = \sol_1(\aa, \bb)$ and $\aa \opr \bb = \sol_2(\aa, \bb)$ yields a birack~$(\SS, \opl, \opr)$. If $(\SS, \sol)$ is involutive, then the birack $(\SS, \opl, \opr)$ is involutive. 

\ITEM2 Conversely, if $(\SS, \opl, \opr)$ is a birack, then defining $\sol(\aa, \bb) = (\aa \opl \bb, \aa \opr \bb)$ yields a nondegenerate set-theoretic solution $(\SS, \sol)$ of YBE. If the birack $(\SS, \opl, \opr)$ is involutive, then $(\SS, \sol)$ is involutive.
\end{lemm}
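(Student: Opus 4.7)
The plan is to prove both directions simultaneously by a direct computation: apply each of $\sol^{12}\sol^{23}\sol^{12}$ and $\sol^{23}\sol^{12}\sol^{23}$ to a generic triple $(\aa,\bb,\cc)\in\SS^3$, then read off the three component equations \eqref{E:Rack1}--\eqref{E:Rack3}. Since \eqref{E:SetTheor} is an equality of functions on~$\SS^3$ and the birack axioms are universally quantified identities on~$\SS$, the correspondence will be an ``if and only if'' at the level of the defining equations, so nothing needs to be done twice.

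Concretely, I first compute step by step
\[
\sol^{12}\sol^{23}\sol^{12}(\aa,\bb,\cc) = \bigl((\aa\opl\bb) \opl ((\aa\opr\bb) \opl \cc),\; (\aa\opl\bb) \opr ((\aa\opr\bb) \opl \cc),\; (\aa\opr\bb)\opr\cc\bigr),
\]
using $\sol(\xx,\yy)=(\xx\opl\yy,\xx\opr\yy)$ at each stage, and similarly
\[
\sol^{23}\sol^{12}\sol^{23}(\aa,\bb,\cc) = \bigl(\aa \opl (\bb\opl\cc),\; (\aa \opr (\bb\opl\cc)) \opl (\bb\opr\cc),\; (\aa \opr (\bb\opl\cc)) \opr (\bb\opr\cc)\bigr).
\]
Equating the first, second, and third coordinates yields exactly \eqref{E:Rack1}, \eqref{E:Rack2}, and \eqref{E:Rack3} respectively. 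So \eqref{E:SetTheor} holds for~$\sol$ if and only if $(\SS,\opl,\opr)$ satisfies the three birack identities.

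Next I handle the nondegeneracy/bijectivity clause. By construction, the left-translation $\yy\mapsto\sol_1(\ss,\yy)$ equals the left-translation $\yy\mapsto \ss\opl\yy$, and the right-translation $\xx\mapsto\sol_2(\xx,\tt)$ equals the right-translation $\xx\mapsto \xx\opr\tt$; hence nondegeneracy of~$(\SS,\sol)$ is literally the bijectivity condition imposed on biracks. It remains to observe that, under these bijectivity hypotheses, the map~$\sol$ built from a birack is itself a bijection of~$\SS\times\SS$: this is obtained in the involutive case from \eqref{E:Rack4}, and in general from a standard argument expressing $\sol\inv$ in terms of the inverse left- and right-translations (which I would verify in a short line).

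Finally, the involutive case is a one-line expansion: $\sol\comp\sol(\aa,\bb) = \sol(\aa\opl\bb,\aa\opr\bb) = \bigl((\aa\opl\bb)\opl(\aa\opr\bb),\;(\aa\opl\bb)\opr(\aa\opr\bb)\bigr)$, so $\sol\comp\sol = \ID$ is equivalent to \eqref{E:Rack4}. Since the whole argument is a transparent bookkeeping exercise, the only mild obstacle is the mechanical care needed when unfolding the three-step compositions; I would present the two compositions in parallel displays so that the matching of coordinates with \eqref{E:Rack1}--\eqref{E:Rack3} is visually immediate.
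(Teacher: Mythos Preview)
Your proposal is correct and is the standard verification; the paper itself does not give a proof of this lemma at all but simply attributes it to Remark~1.6 of~\cite{GaMa}, so there is no argument to compare against beyond noting that your direct coordinate-by-coordinate expansion is exactly how such a citation would be unpacked. The one point you flag yourself---that $\sol$ built from a general (non-involutive) birack is a bijection of~$\SS\times\SS$---is indeed the only place requiring a moment's thought, and your parenthetical indication that it follows from the inverse translations (via the usual ``sideways'' map argument) is the right hint; in the involutive case, which is all the paper actually uses downstream, \eqref{E:Rack4} settles it immediately as you say.
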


Lemma~\ref{L:SolRack} appears as Remark~1.6 in~\cite{GaMa}, using the notation $(^\aa\bb, \aa^\bb)$ for $(\aa\opl\bb, \aa\opr\bb)$. Biracks appeared in low-dimensional topology as a natural algebraic counterpart of Reidemeister move~III~\cite{FJK}. If $\aa \opr \bb = \aa$ always holds, \eqref{E:Rack1}--\eqref{E:Rack3} reduce to the left-selfdistributivity law $(\aa\opl\bb) \opl (\aa \opl \cc) = \aa \opl (\bb\opl\cc)$, corresponding, when left-translations are bijective, to $(\SS, \opl)$ being a \emph{rack}~\cite{FeR}. Note that a birack obtained from a rack is involutive only if $\ss \opl \tt = \tt$ holds for all~$\ss, \tt$ (trivial birack).

Thus, investigating involutive nondegenerate set-theoretic solutions of YBE and involutive biracks are equivalent tasks. We now make a second step following~\cite{RumYB}. If $\opl$ is a binary operation on~$\SS$ and its left-translations are one-to-one, defining $\aa \op \bb$ to be the unique $\cc$ satisfying $\aa \opl \cc = \bb$ provides a well-defined binary operation on~$\SS$, which can be viewed as a left-inverse of~$\opl$. The seminal observation of~\cite{RumYB} is that, if $(\SS, \opl, \opr)$ is a birack, then the left-inverse~$\op$ of the operation~$\opl$ obeys a simple algebraic law and the whole structure can be recovered from that operation~$\op$.

\begin{defi}[Rump \cite{RumYB}]
\label{D:RC}
A \emph{right-cyclic system}, or \emph{RC-system}, is a pair $(\SS, \op)$ where $\op$ is a binary operation on the set~$\SS$ that obeys the \emph{right-cyclic law}~$RC$ 
\begin{equation}
\label{E:RC}
(\xx \op \yy) \op (\xx \op \zz) = (\yy \op \xx) \op (\yy \op \zz).
\end{equation}
An \emph{RC-quasigroup} is an RC-system whose left-translations are one-to-one, that is, for every~$\ss$ in~$\SS$, the map $\tt \mapsto \ss \op \tt$ is one-to-one. An RC-system is called \emph{bijective} if the map $(\ss, \tt) \mapsto (\ss\op\tt, \tt \op\ss)$ is a bijection of $\SS\times\SS$ to itself.
\end{defi}

In~\cite{RumYB}, RC-systems are called ``cycloids'' and RC-quasigroups are called ``cycle sets''; the current terminology may seem convenient in view of subsequent variants (and the widely used convention that ``quasigroup'' refers to bijective translations).

\begin{exam}
\label{X:RC}
Every operation $\ss\op\tt = \ff(\tt)$ with $\ff$ a permutation of~$\SS$ provides a (semi-trivial) bijective RC-quasigroup. Another example (important in Section~\ref{S:StrMonoid} below) is the right-complement operation in a monoid: if $\MM$ is a left-cancellative monoid in which any two elements admit a unique least common right-multiples, then the operation~$\under$ such that $\ff (\ff \under \gg)$ is the least common right-multiple of~$\ff$ and~$\gg$ obeys~\eqref{E:RC}, as easily follows from the commutativity and associativity of the right-lcm. So $(\MM, \under)$ is an RC-system (but, in general, not an RC-quasigroup).
\end{exam}

The following result, which is essentially~\cite[Prop.~1]{RumYB} shows that the context of a bijective RC-quasigroup is entirely equivalent to that of an involutive nondegenerate set-theoretic solution of the YBE.

\begin{prop}
\label{P:SolRC}
\ITEM1 Assume that $(\SS, \sol)$ is an involutive nondegenerate set-theoretic solution of~YBE. For $\ss, \tt$ in~$\SS$, define $\ss \op \tt$ to be the unique $\rr$ satisfying $\sol_1(\ss, \rr) = \tt$. Then $(\SS, \op)$ is a bijective RC-quasigroup. 

\ITEM2 Conversely, assume that $(\SS, \op)$ is a bijective RC-quasigroup. For~$\aa, \bb$ in~$\SS$, define $\sol(\aa, \bb)$ to be the unique pair~$(\aa', \bb')$ satisfying $\aa \op \aa' = \bb$ and $\aa' \op \aa = \bb'$. Then $(\SS, \sol)$ is an involutive nondegenerate set-theoretic solution of~YBE. 
\end{prop}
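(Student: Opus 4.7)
My plan is to use Lemma~\ref{L:SolRack} as a bridge, going through involutive biracks in both directions. Under the dictionary $\sol(\aa, \bb) = (\aa \opl \bb, \aa \opr \bb)$, the prescribed operation $\op$ is the left-inverse of $\opl$: $\ss \op \tt = \rr$ if and only if $\ss \opl \rr = \tt$. For part~\ITEM{1}, I first pass to the associated involutive birack; nondegeneracy guarantees that left-translations of $\opl$ are bijective, so the left-inverse $\op$ is well-defined and has bijective left-translations too. For part~\ITEM{2}, I define $\opl$ as the left-inverse of $\op$ and set $\aa \opr \bb := (\aa \opl \bb) \op \aa$, then verify that $(\SS, \opl, \opr)$ is an involutive birack and invoke Lemma~\ref{L:SolRack}.

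For part~\ITEM{1}, the RC-law will come out as a combination of axiom~\eqref{E:Rack1} and the involutivity relation~\eqref{E:Rack4}. Setting $\uu = \xx \op \yy$ and $\ww = \uu \op (\xx \op \zz)$, axiom~\eqref{E:Rack1} applied to $(\xx, \uu, \ww)$ gives $\yy \opl ((\xx \opr \uu) \opl \ww) = \zz$, hence $\yy \op \zz = (\xx \opr \uu) \opl \ww$; the first identity in~\eqref{E:Rack4} simplifies $\xx \opr \uu$ to $\yy \op \xx$, so that $\ww = (\yy \op \xx) \op (\yy \op \zz)$, matching $(\xx \op \yy) \op (\xx \op \zz) = \ww$. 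For the bijectivity of $(\ss, \tt) \mapsto (\ss \op \tt, \tt \op \ss)$, I will factor it as $(\ss, \tt) \mapsto (\ss, \ss \op \tt) \mapsto (\ss \op \tt, \ss) \mapsto (\ss \op \tt, \ss \opr (\ss \op \tt))$, using the involutive identity $\ss \opr (\ss \op \tt) = \tt \op \ss$ at the last step; each factor is a bijection (first by bijective left-translations of $\op$, second by swap, third by bijective right-translations of $\opr$ coming from nondegeneracy).

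For part~\ITEM{2}, the involutivity relations~\eqref{E:Rack4} are immediate from the definitions: with $\aa' := \aa \opl \bb$ and $\bb' := \aa' \op \aa$, one has $\aa' \opl \bb' = \aa$ by definition of $\opl$, and $\aa' \opr \bb' = (\aa' \opl \bb') \op \aa' = \aa \op \aa' = \bb$. Bijectivity of right-translations of $\opr$ follows from that of the map $(\aa, \cc) \mapsto (\aa \op \cc, \cc \op \aa)$ through the identification $(\aa, \aa \opl \bb) \mapsto (\bb, \aa \opr \bb)$. The remaining axioms~\eqref{E:Rack1}--\eqref{E:Rack3} then reduce to the RC-law by direct computation: for~\eqref{E:Rack1}, introducing auxiliary variables and translating both sides through the $\opl$/$\op$ correspondence, the two sides become the two members of the RC-law for a suitable triple, and injectivity of left-translations closes the argument. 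The main obstacle I expect is the bookkeeping for~\eqref{E:Rack2} and~\eqref{E:Rack3}, which follow the same pattern but with more auxiliary variables; this can be substantially shortened by noting that in the involutive setting $\opr$ is determined by $\opl$ via~\eqref{E:Rack4}, so these axioms essentially rephrase~\eqref{E:Rack1} modulo involutivity and one verification suffices.
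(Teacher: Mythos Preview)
Your approach is essentially the paper's: both go through Lemma~\ref{L:SolRack} and exploit the dictionary ``$\yy = \xx \opl \zz$ iff $\xx \op \yy = \zz$, and then $\xx \opr \zz = \yy \op \xx$''. Your derivation of the RC-law in~\ITEM1 and your bijectivity argument for $(\ss,\tt)\mapsto(\ss\op\tt,\tt\op\ss)$ are correct and match the paper's sketch.

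The one genuine gap is in~\ITEM2, where you assert that ``in the involutive setting $\opr$ is determined by $\opl$ via~\eqref{E:Rack4}, so \eqref{E:Rack2} and~\eqref{E:Rack3} essentially rephrase~\eqref{E:Rack1} modulo involutivity and one verification suffices''. That $\opr$ is determined by $\opl$ is true, but it does not by itself imply that the remaining two components of the braid relation follow from the first; this needs an argument, and you have not supplied one. You must either verify~\eqref{E:Rack2} and~\eqref{E:Rack3} directly (the same substitution trick you used for~\eqref{E:Rack1} works, just with more bookkeeping), or---and this is what the paper does---bypass the axiom-by-axiom check and verify YBE for~$\sol$ in one shot. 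The paper's cube (Figure~\ref{F:Cube}) amounts to the following: every triple in~$\SS^3$ can be written uniquely as $(\rr_1,\ \rr_1\op\rr_2,\ (\rr_1\op\rr_2)\op(\rr_1\op\rr_3))$ using bijectivity of left-translations; since $\sol(\xx, \xx\op\yy) = (\yy, \yy\op\xx)$, applying $\sol^{12}$ or $\sol^{23}$ just transposes two of the $\rr_\ii$'s in this parametrisation (the RC-law is exactly what makes $\Pol_3$ symmetric in its first two entries), and both $\sol^{12}\sol^{23}\sol^{12}$ and $\sol^{23}\sol^{12}\sol^{23}$ send $(\rr_1,\rr_2,\rr_3)$ to $(\rr_3,\rr_2,\rr_1)$. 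This gives all three components of YBE at once and is shorter than three separate verifications.
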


As the result essentially appears in~\cite{RumYB}, we shall not go into the details of the proof, whose principle is clear: according to Lemma~\ref{L:SolRack}, the point is to go from the birack laws to the RC-law and \textit{vice versa}, and it consists in repeatedly using the fact that, for all~$\xx, \yy, \zz$ in the considered set~$\SS$, the relation $\yy = \xx \opl \zz$ is equivalent to $\xx\op\yy = \zz$ and it implies $\yy \op \xx = \xx \opr \zz$ (for~\ITEM1) and, symmetrically, that $\zz = \xx \op \yy$ is equivalent to $\xx\opl\zz = \yy$ and it implies $\xx\opr\zz = \yy \op \xx$ (for~\ITEM2). The argument then amounts to completing the cube displayed in Figure~\ref{F:Cube}, where a square diagram 
\VR(0,6)
\begin{picture}(18,8)(-3,3)
\pcline{->}(1,8)(9,8)
\taput{$\aa'$}
\pcline{->}(1,0)(9,0)
\tbput{$\bb$}
\pcline{->}(0,7)(0,1)
\tlput{$\aa$}
\pcline{->}(10,7)(10,1)
\trput{$\bb'$}
\end{picture}
means that we have $\aa' = \aa\opl\bb$ and $\bb' = \aa\opr\bb$, that is, equivalently, $\bb = \aa \op \aa'$ and $\bb' = \aa' \op \aa$. As observed by W.\,Rump, this picture illustrates the nature of~\eqref{E:RC} as a (discrete form of) an integrability condition.

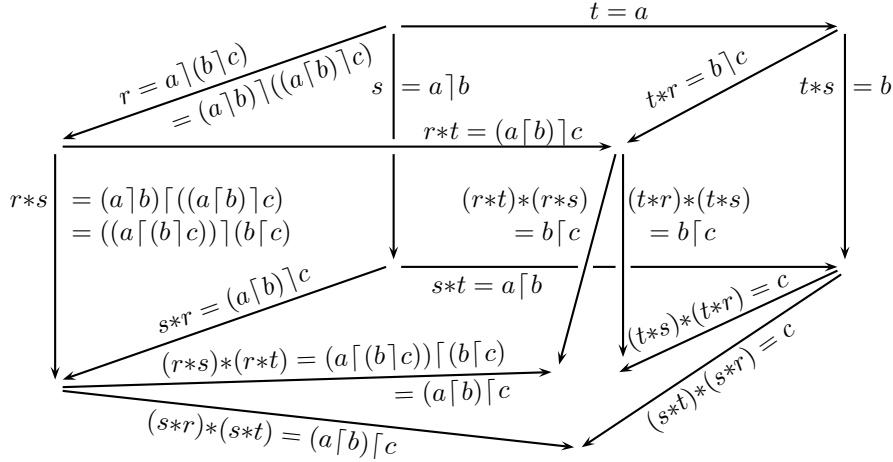
\begin{figure}[htb]
\begin{picture}(105,58)(0,-1)
\pcline{->}(46,56)(104,56)
\taput{$\tt = \aa$}

\pcline{->}(44,56)(1,41)
\put(8,45.5){\rotatebox{20}{\hbox{$\rr = \aa \opl(\bb\opl\cc)$}}}
\put(15,42){\rotatebox{20}{\hbox{$ = (\aa\opl\bb)\opl((\aa\opr\bb)\opl\cc)$}}}

\pcline{->}(0,39)(0,9)
\put(-6,32){$\rr{\op}\ss$}
\put(1,32){ $= (\aa\opl\bb)\opr((\aa\opr\bb)\opl\cc)$}
\put(1,27.5){ $= ((\aa\opr(\bb\opl\cc))\opl(\bb\opr\cc)$}

\pcline(45,55)(45,41)
\pcline{->}(45,39)(45,25)
\put(42,47){$\ss$}
\put(46,47){$ = \aa\opl\bb$}

\pcline{->}(1,40)(74,40)
\put(49,41){$\rr{\op}\tt = (\aa\opr\bb)\opl\cc$}

\pcline{->}(44,24)(1,9)
\put(13,15){\rotatebox{20}{\hbox{$\ss{\op}\rr = (\aa\opr\bb)\opl\cc$}}}

\pcline{->}(1,7.5)(69,0)
\put(12,2.5){\rotatebox{-6}{\hbox{$(\ss{\op}\rr){\op}(\ss{\op}\tt) = (\aa\opr\bb)\opr\cc$}}}

\pcline{->}(1,8)(66,10)
\put(14,10){\rotatebox{2}{\hbox{$(\rr{\op}\ss){\op}(\rr{\op}\tt) = (\aa\opr(\bb\opl\cc))\opr(\bb\opr\cc) $}}}
\put(45,6){\rotatebox{2}{\hbox{$= (\aa\opr\bb)\opr\cc$}}}

\pcline{->}(74.5,39)(67,11)
\put(54,32){$(\rr{\op}\tt){\op}(\rr{\op}\ss)$}
\put(61,27.5){$ = \bb\opr\cc$}

\pcline{->}(75.5,39)(75.5,12)
\put(76,32){$(\tt{\op}\rr){\op}(\tt{\op}\ss)$}
\put(79,27.5){$ = \bb\opr\cc$}

\pcline{->}(105,55)(105,25)
\put(99,47){$\tt{\op}\ss$}
\put(106,47){$ = \bb$}

\pcline{->}(104,55.5)(76,40.5)
\put(78,45){\rotatebox{26}{\hbox{$\tt{\op}\rr = \bb\opl\cc$}}}

\pcline(46,24)(69.5,24)
\pcline(71.5,24)(74.5,24)
\pcline{->}(76.5,24)(104,24)
\put(50,20.5){$\ss{\op}\tt = \aa\opr\bb$}

\pcline{->}(104.5,23)(70,0)
\put(78,2){\rotatebox{35}{\hbox{$(\ss{\op}\tt){\op}(\ss{\op}\rr) = \cc$}}}

\pcline{->}(104,23.5)(75,10)
\put(75.5,13){\rotatebox{22}{\hbox{$(\tt{\op}\ss){\op}(\tt{\op}\rr) = \cc$}}}

\end{picture}
\caption[]{\sf\smaller Proof of Proposition~\ref{P:SolRC}\ITEM1: one successively evaluates the edges of the cube in terms of~$\aa, \bb, \cc$ and the relations~\eqref{E:Rack1}--\eqref{E:Rack3} guarantee that the cube closes. The same diagram can be used for the proof of~\ITEM2 below, except that one starts with a closed cube and evaluates some edges in two different ways to establish~\eqref{E:Rack1}--\eqref{E:Rack3}.}
\label{F:Cube}
\end{figure}

The (small) miracle in Proposition~\ref{P:SolRC} is that it reduces the constraints of a birack, which involve two operations and three laws, to those of an RC-quasigroup, which only involves one operation and one law. Actually, a second, symmetric operation is also present in every bijective RC-quasigroup. 

\begin{defi}
An \emph{RLC-system} is a triple $(\SS, \op, \opL)$ such that $(\SS, \op)$ is an RC-system, $\opL$ is a second binary operation on~$\SS$ that obeys the \emph{left-cyclic law}~$LC$ 
\begin{equation}
\label{E:LC}
(\zz \opL \xx) \opL (\yy \opL \xx) = (\zz \opL \yy) \opL (\xx \opL \yy),
\end{equation}
and both operations are connected by
\begin{equation}
\label{E:InvolRLC}
(\yy \op \xx)\opL(\xx \op \yy) = \xx = (\yy \opL \xx) \op (\xx \opL \yy).
\end{equation}
An \emph{RLC-quasigroup} is an RLC-system~$(\SS, \op, \opL)$ such that the left-translations of~$\op$ and the right-translations of~$\opL$ are one-to-one.
\end{defi}

Then RLC-quasigroups and bijective RC-quasigroups are equivalent structures.

\begin{lemm}
\label{L:SecondOp}
For all binary operations $\op, \opL$ on a set~$\SS$, the following are equivalent:

\ITEM1 The system $(\SS, \op, \opL)$ obeys the involutivity laws~\eqref{E:InvolRLC}.

\ITEM2 The map $\Double : (\ss, \tt) \mapsto (\ss\op\tt, \tt\op\ss)$ is a bijection of $\SS\times\SS$ to itself and $\opL$ is the unique operation on~$\SS$ such that the map $(\ss, \tt) \mapsto (\ss\opL\tt, \tt\opL\ss)$ is the inverse of~$\Double$
\end{lemm}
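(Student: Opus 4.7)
The plan is to recognize that the two involutivity laws in~\eqref{E:InvolRLC} are nothing but the componentwise expression of the relations $\Double \comp \Double_L = \ID$ and $\Double_L \comp \Double = \ID$, where $\Double_L$ denotes the map $(\ss, \tt) \mapsto (\ss\opL\tt, \tt\opL\ss)$. Once that observation is made, the equivalence follows from the standard characterization of two-sided inverses.

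More precisely, I would first compute, for $\xx, \yy$ in~$\SS$,
\[
\Double_L(\Double(\xx, \yy)) = \bigl((\xx\op\yy)\opL(\yy\op\xx),\ (\yy\op\xx)\opL(\xx\op\yy)\bigr).
\]
The second component equals~$\xx$ exactly when the first law of~\eqref{E:InvolRLC} holds, and the first component then equals~$\yy$ by exchanging the roles of~$\xx$ and~$\yy$ in that same law. So $\Double_L \comp \Double = \ID_{\SS\times\SS}$ is equivalent to the first half of~\eqref{E:InvolRLC}. A symmetric computation shows that $\Double \comp \Double_L = \ID_{\SS\times\SS}$ is equivalent to the second half of~\eqref{E:InvolRLC}.

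For \ITEM1$\Rightarrow$\ITEM2, these two identities force $\Double$ to be a bijection with two-sided inverse~$\Double_L$. Uniqueness of the operation $\opL$ making $\Double_L$ equal to~$\Double^{-1}$ is immediate: the pair $(\ss\opL\tt, \tt\opL\ss)$ is determined by $\Double^{-1}(\ss, \tt)$, so reading off the first component for each pair~$(\ss, \tt)$ fixes $\opL$ completely. For \ITEM2$\Rightarrow$\ITEM1, the assumption that $\Double_L = \Double^{-1}$ gives both compositions~$= \ID$, whence both laws of~\eqref{E:InvolRLC} by the above equivalence.

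There is really no obstacle here beyond careful bookkeeping of which component of the composed map produces which of the two identities in~\eqref{E:InvolRLC}; the whole lemma is a direct unfolding of definitions and will occupy only a few lines.
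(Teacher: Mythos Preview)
Your approach is the paper's: both arguments unwind~\eqref{E:InvolRLC} into statements about the composites of~$\Double$ with the companion map $\Double_L \colon (\ss,\tt) \mapsto (\ss\opL\tt, \tt\opL\ss)$. There is, however, a bookkeeping slip. You correctly compute that, under the first law of~\eqref{E:InvolRLC}, the pair $\Double_L(\Double(\xx,\yy))$ has first component~$\yy$ and second component~$\xx$---but that is the transposition $(\xx,\yy)\mapsto(\yy,\xx)$, not~$\ID_{\SS\times\SS}$. The same happens for $\Double\comp\Double_L$ under the second law. This does not wreck the argument: the transposition is an involution, so the two equalities still force $\Double$ to be a bijection, with inverse $(\ss,\tt)\mapsto(\tt\opL\ss,\ss\opL\tt)$. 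You should either record this explicitly, or redefine~$\Double_L$ with its components swapped so that the composites genuinely give the identity. (The paper's own phrasing of~\ITEM2 and its proof carry the same swap ambiguity, so the substance is unaffected.) Once this is corrected, the uniqueness of~$\opL$ follows exactly as you indicate, and nothing further is needed.
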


\begin{proof}
Assume that $(\SS, \op, \opL)$ satisfies~\eqref{E:InvolRLC}. Let $(\ss', \tt')$ belong to~$\SS \times \SS$. Put $\ss = \tt' \opL\ss'$ and $\tt = \ss' \opL\tt'$. By~\eqref{E:InvolRLC}, we have $\ss\op\tt = \ss'$ and $\tt \op\ss = \tt'$, whence $\Double(\ss, \tt) = (\ss', \tt')$. So $\Double$ is surjective. Conversely, assume $\Double(\ss, \tt) = (\ss', \tt')$. Then the left-hand equality in~\eqref{E:InvolRLC} gives $\ss = \tt'\op\ss'$ and $\tt = \ss'\op\tt'$. So $\Double$ is injective. Moreover, the equalities show that the map $(\ss, \tt) \mapsto (\ss\opL\tt, \tt\opL\ss)$ is~$\Double\inv$. So \ITEM1 implies~\ITEM2.

Conversely, if $\Double$ is a bijection from $\SS \times \SS$ to itself, defining $\ss' \opL\tt'$ to be the unique~$\tt$ satisfying $\ss\op\tt = \ss'$ and $\tt\op\ss = \tt'$ for some~$\ss$ guarantees that $(\ss, \tt) \mapsto (\ss\opL\tt, \tt\opL\ss)$ is~$\Double\inv$. Then \eqref{E:InvolRLC} is satisfied by definition, that is, \ITEM2 implies~\ITEM1.
\end{proof}

Summarizing, we see that nondegenerate involutive set-theoretic solutions of the Yang--Baxter equation, involutive biracks, bijective RC-quasi\-groups, and RLC-quasigroups are entirely equivalent frameworks.

\begin{conv}
From now on, we write ``solution of~YBE'' for ``involutive nondegenerate set-theoretic solution of~YBE''.
\end{conv} 

According to~\cite{Eti}, a group and a monoid are associated with every solution of YBE, hence, equivalently, with every (bijective) RC-quasigroup. 

\begin{defi}
\label{D:StrYB}
The \emph{structure group} (\resp \emph{monoid}) associated with a solution~$(\SS, \sol)$ of YBE is the group (\resp \emph{monoid}) defined by the presentation
\begin{equation}
\label{E:StrYB}
\PRES\SS{\{\aa\bb = \aa'\bb' \mid \aa, \bb, \aa', \bb' \in\SS \text{ satisfying } \sol(\aa, \bb) = (\aa', \bb')\}}.
\end{equation}
The \emph{structure group} (\resp \emph{monoid}) associated with an RC-quasigroup~$(\SS, \op)$ is the group (\resp \emph{monoid}) defined by the presentation
\begin{equation}
\label{E:Str}
\PRES\SS{\{\ss (\ss \op \tt) = \tt (\tt \op \ss) \mid \ss \not= \tt \in \SS\}}.
\end{equation}
\end{defi}

Such monoids and groups will be the main subject of investigation in this paper. The first observation is that, as can be expected, the monoids and groups  associated with solutions of YBE and with bijective RC-quasigroups coincide.

\begin{lemm}
\label{L:Str}
If a solution $(\SS, \sol)$ of YBE and a bijective RC-quasigroup $(\SS, \op)$ are connected as in Proposition~\ref{P:SolRC}, then the structure monoids of~$(\SS, \sol)$ and~$(\SS, \op)$ coincide, and so do the corresponding groups.
\end{lemm}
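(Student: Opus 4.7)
The strategy is to check directly that the two sets of defining relations in~\eqref{E:StrYB} and~\eqref{E:Str} coincide (up to trivial relations and the symmetry $\sol\comp\sol = \ID$), so that the presented monoids and groups are literally the same. The work is entirely in translating indices using Proposition~\ref{P:SolRC}.

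First I would start from the YBE side. Take a defining relation $\aa \bb = \aa' \bb'$ with $\sol(\aa, \bb) = (\aa', \bb')$. Setting $\ss := \aa$ and $\tt := \aa'$, by definition of~$\op$ in Proposition~\ref{P:SolRC}\ITEM1 the element $\ss \op \tt$ is the unique $\rr$ satisfying $\sol_1(\ss, \rr) = \tt$, so $\rr = \bb$ by nondegeneracy; hence $\bb = \ss \op \tt$. Using involutivity, $\sol(\aa', \bb') = (\aa, \bb)$ gives in the same way $\bb' = \tt \op \ss$. Thus the relation $\aa\bb = \aa'\bb'$ rewrites exactly as $\ss (\ss \op \tt) = \tt (\tt \op \ss)$. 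If $\aa = \aa'$ (equivalently $\ss = \tt$), this is the trivial relation $\aa\bb = \aa\bb$ and may be discarded.

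Conversely, starting from an RC-relation $\ss (\ss \op \tt) = \tt (\tt \op \ss)$ with $\ss \neq \tt$, set $\aa := \ss$, $\bb := \ss \op \tt$, $\aa' := \tt$, $\bb' := \tt \op \ss$. By Proposition~\ref{P:SolRC}\ITEM2, $\sol(\aa, \bb) = (\aa', \bb')$ because, by construction, $\aa \op \aa' = \ss \op \tt = \bb$ and $\aa' \op \aa = \tt \op \ss = \bb'$. Hence this is a YBE-relation. Since the map $(\aa, \bb) \mapsto (\aa, \sol_1(\aa, \bb))$ is a bijection of~$\SS \times \SS$ to itself (by nondegeneracy), the two correspondences just described are mutually inverse, so the two lists of relations coincide up to trivial relations. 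The presented structure monoids therefore coincide, and the corresponding groups are obtained by adjoining inverses to the same monoid presentation.

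The only mild subtlety—and what I would be most careful about—is bookkeeping with the involutive pairing: each non-trivial relation in~\eqref{E:StrYB} is indexed twice (by $(\aa, \bb)$ and by $(\aa', \bb')$), just as each relation in~\eqref{E:Str} is indexed twice (by $(\ss, \tt)$ and by $(\tt, \ss)$). The bijection $(\aa, \bb) \mapsto (\ss, \tt) = (\aa, \sol_1(\aa, \bb))$ respects these symmetries, so no relation is missed or counted differently. Beyond this, the argument is a direct rewriting and does not require any of the RC-calculus developed later.
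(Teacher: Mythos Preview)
Your argument is correct and is exactly the approach the paper takes: it simply observes that, via the dictionary of Proposition~\ref{P:SolRC}, the relations in~\eqref{E:StrYB} and~\eqref{E:Str} coincide up to trivial relations $\aa\bb=\aa\bb$ and the obvious double indexing. You have merely spelled out in detail what the paper states in one sentence.
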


The result directly follows from the connection of Proposition~\ref{P:SolRC}, which shows that the relations of~\eqref{E:StrYB} and~\eqref{E:Str} coincide, up to some repetitions and adding trivial relations $\aa\bb = \aa\bb$ in~\eqref{E:StrYB}. 

Thus investigating structure monoids of solutions of YBE and of bijective RC-quasigroups are equivalent tasks. The goal of this paper is to show the advantages of the second approach. 

\section{RC-calculus}
\label{S:RCCalculus}

In order to exploit the RC-law, we introduce sorts of polynomials involving the RC-operation and establish various algebraic relations that will be heavily used in the sequel. Most verifications are easy, but introducing convenient notation is important to obtain simple formulas and perform computations that, otherwise, would require tedious developments. It should be mentioned that some formulas admit counterparts in the world of braces~\cite{RumBM, RumBr}, which are equivalent to linear RC-systems, defined to be RC-systems equipped with a compatible abelian group operation (by~\cite[Prop.~6]{RumYB}, the group~$\GG$ associated with a bijective RC-quasigroup~$(\SS, \op)$ is the brace~$\ZZZZ^{(\SS)}$ with the Jacobson circle operation).

Everywhere in the sequel, $\opL$, $\op$, and $\cdot$ refer to binary operations.

\begin{defi}
\label{D:Pol}
For $\nn \ge 1$, we inductively define formal expressions $\Pol_\nn(\xx_1 \wdots \xx_\nn)$ and $\Polt_\nn(\xx_\nn \wdots\xx_1)$ by $\Pol_1(\xx_1) = \Polt_1(\xx_1) = \xx_1$ and 
\begin{gather}
\Pol_\nn(\xx_1 \wdots\xx_\nn) = \Pol_{\nn-1}(\xx_1 \wdots\xx_{\nn-1}) \op \Pol_{\nn-1}(\xx_1 \wdots\xx_{\nn-2}, \xx_\nn),\\
\Polt_\nn(\xx_1 \wdots\xx_\nn) = \Polt_{\nn-1}(\xx_1, \xx_3 \wdots\xx_\nn) \opL \Polt_{\nn-1}(\xx_2 \wdots\xx_\nn).
\end{gather}
\end{defi}

The expression $\Pol_\nn(\xx_1 \wdots \xx_\nn)$ is a sort of $\nn$-variable monomial involving~$\op$. We find $\Pol_2(\xx_1, \xx_2) = \xx_1 \op \xx_2$, then $\Pol_3(\xx_1, \xx_2, \xx_3) = (\xx_1 \op \xx_2) \op (\xx_1 \op \xx_3)$, etc. Clearly, $2^{\nn-1}$ variables~$\xx_\ii$ occur in $\Pol_\nn(\xx_1 \wdots\xx_\nn)$, with brackets as in a balanced binary tree. In the language of braces, $\Pol_\nn(\xx_1 \wdots \xx_\nn)$ would correspond to $(\xx_1 + \pdots + \xx_\nn) \op \xx_\nn$ 

If $(\SS, \opL)$ is an algebraic system, $\Pol_\nn(\ss_1 \wdots \ss_\nn)$ is  the evaluation of~$\Pol_\nn(\xx_1 \wdots \xx_\nn)$ when $\xx_\ii$ is given the value~$\ss_\nn$. The next result is an iterated version of the RC-law.

\begin{lemm}
\label{L:Pol}
Assume that $(\SS, \op)$ is an RC-system. Then, for all $\ss_1 \wdots \ss_\nn$ in~$\SS$ and~$\pi$ in~$\Sym_{\nn-1}$, we have $\Pol_\nn(\ss_{\pi(1)} \wdots \ss_{\pi(\nn-1)}, \ss_\nn) = \Pol_\nn(\ss_1 \wdots \ss_\nn)$.
\end{lemm}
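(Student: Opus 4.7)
I would argue by induction on $\nn$, with the cases $\nn = 1$ and $\nn = 2$ being trivial (the second because $\Sym_1$ is trivial). For the induction step, I would use the fact that the symmetric group $\Sym_{\nn-1}$ is generated by its subgroup $\Sym_{\nn-2}$ acting on the positions $1 \wdots \nn-2$ together with the single transposition~$\tau$ that swaps positions $\nn-2$ and $\nn-1$. It therefore suffices to establish two statements: (a) $\Pol_\nn$ is invariant under any permutation of its first $\nn-2$ arguments, and (b) $\Pol_\nn$ is invariant under $\tau$.

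For part~(a), I would expand $\Pol_\nn(\ss_1 \wdots \ss_\nn) = \Pol_{\nn-1}(\ss_1 \wdots \ss_{\nn-1}) \op \Pol_{\nn-1}(\ss_1 \wdots \ss_{\nn-2}, \ss_\nn)$ and apply the induction hypothesis to each of the two $\Pol_{\nn-1}$ factors: inside each of them, the first $\nn-2$ arguments may be freely permuted, which gives exactly the invariance we want.

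For part~(b), which is the heart of the argument, I would introduce the abbreviations
\[
A = \Pol_{\nn-2}(\ss_1 \wdots \ss_{\nn-2}), \qquad B = \Pol_{\nn-2}(\ss_1 \wdots \ss_{\nn-3}, \ss_{\nn-1}), \qquad C = \Pol_{\nn-2}(\ss_1 \wdots \ss_{\nn-3}, \ss_\nn).
\]
Unfolding the definition of $\Pol_{\nn-1}$ in terms of $\Pol_{\nn-2}$ gives $\Pol_\nn(\ss_1 \wdots \ss_\nn) = (A \op B) \op (A \op C)$, whereas the expression obtained after swapping $\ss_{\nn-2}$ and $\ss_{\nn-1}$ equals $(B \op A) \op (B \op C)$. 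The equality of these two expressions is precisely the instance of the right-cyclic law~\eqref{E:RC} applied to the triple $(A, B, C)$, which completes the argument.

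The only point requiring some care is the bookkeeping in part~(b): one must check that the three $\Pol_{\nn-2}$-subexpressions occurring after unfolding each of the four $\Pol_{\nn-1}$ pieces really match the $A$, $B$, $C$ above (up to a permutation of their first $\nn-3$ arguments, which is harmless by the induction hypothesis applied at level $\nn-2$). Once this bookkeeping is done, the whole proof reduces to one application of~\eqref{E:RC} per induction step, which is exactly the ``polynomial'' reformulation of the RC-law the definition of~$\Pol_\nn$ is designed to produce.
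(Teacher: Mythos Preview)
Your argument is correct and is essentially the same as the paper's: the paper reduces to adjacent transpositions and uses an ``obvious induction on~$\nn \ge 3$'', with the base case $\nn = 3$ being precisely the RC-law, which is exactly what your parts~(a) and~(b) spell out. One small remark: in part~(b), the caution about the $\Pol_{\nn-2}$-subexpressions matching only ``up to a permutation of their first $\nn-3$ arguments'' is unnecessary---the unfolding gives $A$, $B$, $C$ on the nose, with no reordering needed.
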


It suffices to verify the result when $\pi$ is a transposition of adjacent entries, and then the result follows from an obvious induction on~$\nn \ge 3$, the case $\nn = 3$ precisely corresponding to the RC-law.

Of course, the counterpart of Lemma~\ref{L:Pol} involving~$\Polt_\nn$ is valid when  $\opL$ satisfies the LC-law~\eqref{E:LC}. Further results appear when the monomials~$\Pol_\nn$ are evaluated in an RC-quasigroup, that is, when left-translations are one-to-one.

\begin{lemm}
\label{L:Distinct}
Assume that $(\SS, \op)$ is an RC-quasigroup and $\ss_1 \wdots \ss_\nn$ lie in~$\SS$. 

\ITEM1 The map $\ss \mapsto \Pol_{\nn+1}(\ss_1 \wdots \ss_\nn, \ss)$ is a bijection of~$\SS$ into itself.

\ITEM2 There exist $\rr_1 \wdots \rr_\nn$ in~$\SS$ satisfying $\Pol_\ii(\rr_1 \wdots \rr_\ii) = \ss_\ii$ for $1 \le \ii \le \nn$. 

\ITEM3 Put $\sst_\ii = \Pol_\nn(\ss_1 \wdots \widehat{\ss_\ii}, \wdots \ss_\nn, \ss_\ii)$ for $1 \le \ii \le \nn$. Then, for all~$\ii, \jj$, the relations $\ss_\ii = \ss_\jj$ and $\sst_\ii = \sst_\jj$ are equivalent.
\end{lemm}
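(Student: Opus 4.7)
For~\ITEM1, I would argue by induction on $\nn \ge 0$: the case $\nn = 0$ is immediate since $\Pol_1$ is the identity; for $\nn \ge 1$, expanding
\[
\Pol_{\nn+1}(\ss_1 \wdots \ss_\nn, \ss) = \Pol_\nn(\ss_1 \wdots \ss_\nn) \op \Pol_\nn(\ss_1 \wdots \ss_{\nn-1}, \ss)
\]
shows that, by induction, the inner map $\ss \mapsto \Pol_\nn(\ss_1 \wdots \ss_{\nn-1}, \ss)$ is a bijection of~$\SS$, while left-translation by $\Pol_\nn(\ss_1 \wdots \ss_\nn)$ is a bijection by the RC-quasigroup hypothesis; hence so is their composition. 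Part~\ITEM2 then follows by a straightforward induction on~$\nn$: one sets $\rr_1 = \ss_1$ at the base, and, given $\rr_1 \wdots \rr_{\nn-1}$ satisfying $\Pol_\ii(\rr_1 \wdots \rr_\ii) = \ss_\ii$ for $\ii \le \nn-1$, invokes~\ITEM1 to extract $\rr_\nn$ with $\Pol_\nn(\rr_1 \wdots \rr_\nn) = \ss_\nn$.

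For~\ITEM3, the case $\ii = \jj$ is trivial, so I would assume $\ii \ne \jj$ and use Lemma~\ref{L:Pol} to freely reorder the first $\nn - 1$ entries of the polynomials defining $\sst_\ii$ and $\sst_\jj$. Fixing an ordering $\tau_1 \wdots \tau_{\nn-2}$ of $\{1 \wdots \nn\} \setminus \{\ii, \jj\}$, I would rewrite
\begin{align*}
\sst_\ii &= \Pol_\nn(\ss_{\tau_1} \wdots \ss_{\tau_{\nn-2}}, \ss_\jj, \ss_\ii) = \vv \op \uu,\\
\sst_\jj &= \Pol_\nn(\ss_{\tau_1} \wdots \ss_{\tau_{\nn-2}}, \ss_\ii, \ss_\jj) = \uu \op \vv,
\end{align*}
with $\uu := \Pol_{\nn-1}(\ss_{\tau_1} \wdots \ss_{\tau_{\nn-2}}, \ss_\ii)$ and $\vv := \Pol_{\nn-1}(\ss_{\tau_1} \wdots \ss_{\tau_{\nn-2}}, \ss_\jj)$. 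Part~\ITEM1 already gives $\uu = \vv \Leftrightarrow \ss_\ii = \ss_\jj$, so the task reduces to proving that $\vv \op \uu = \uu \op \vv \Leftrightarrow \uu = \vv$.

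One implication is trivial. For the other, I would apply the RC-law with $(\xx, \yy, \zz) = (\uu, \vv, \zz)$ to get
\[
(\uu \op \vv) \op (\uu \op \zz) = (\vv \op \uu) \op (\vv \op \zz);
\]
combined with $\uu \op \vv = \vv \op \uu$, left-cancellation yields $\uu \op \zz = \vv \op \zz$ for every $\zz \in \SS$. Specialising to $\zz = \uu$ gives $\uu \op \uu = \vv \op \uu = \uu \op \vv$, and cancelling $\uu$ on the left one more time produces $\uu = \vv$. The delicate point in this plan is this last step, extracting $\uu = \vv$ from the weaker relation $\uu \op \vv = \vv \op \uu$: it depends in an essential way on the left-injectivity built into the notion of RC-quasigroup, since without it one would only recover equality of the left-translations determined by $\uu$ and $\vv$.
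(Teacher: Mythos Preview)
Your proof is correct and follows essentially the same strategy as the paper. Parts~\ITEM1 and~\ITEM2 are identical in spirit; in~\ITEM2 you invoke~\ITEM1 directly to produce~$\rr_\nn$, whereas the paper first solves $\ss_{\nn-1} \op \ss = \ss_\nn$ and then applies~\ITEM1 one level lower---your shortcut is fine. In~\ITEM3, the paper frames the argument as an induction on~$\nn$, proving the implication $\uu \op \vv = \vv \op \uu \Rightarrow \uu = \vv$ as the base case $\nn = 2$ and then reducing $\nn \ge 3$ to it; you instead isolate that implication as a standalone fact and apply it once after the direct reduction via Lemma~\ref{L:Pol} and~\ITEM1. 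The chain of equalities you use (RC-law plus left-cancellation, then specialize $\zz = \uu$) is the same computation the paper carries out, just organized slightly differently.
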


\begin{proof}
\ITEM1 Use induction on~$\nn$. For $\nn = 1$, the result directly follows from the assumption. For $\nn \ge 2$, we have $\Pol_{\nn+1}(\ss_1 \wdots \ss_\nn, \ss) = \tt \op \Pol_\nn(\ss_1 \wdots \ss_{\nn-1}, \ss)$  with $\tt = \Pol_\nn(\ss_1 \wdots \ss_{\nn-1})$. By induction hypothesis, $\ss \mapsto \Pol_\nn(\ss_1 \wdots \ss_{\nn-1}, \ss)$ is bijective. Composing with the left-translation by~$\tt$ yields a bijection. 

\ITEM2 Use induction on~$\nn$. For $\nn = 1$, take $\tt_1 = \ss_1$. Assume $\nn \ge 2$. By induction hypothesis, there exist $\rr_1 \wdots \rr_{\nn-1}$ satisfying $\Pol_\ii(\rr_1 \wdots \rr_\ii) = \ss_\ii$ for $1 \le \ii \le \nn-1$. Then, by definition of~$\Pol_\nn$ and owing to $\Pol_{\nn-1}(\rr_1 \wdots \rr_{\nn-1}) = \ss_{\nn-1}$, we have $\Pol_\nn(\rr_1 \wdots \rr_{\nn-1}, \xx) = \ss_{\nn-1} \op \Pol_{\nn-1}(\rr_1 \wdots \rr_{\nn-2}, \xx)$. As the left-translation by~$\ss_{\nn-1}$ is surjective, there exists~$\ss$ satisfying $\ss_{\nn-1} \op \ss = \ss_\nn$. Then, by~\ITEM1, there exists~$\rr_\nn$ satisfying $\Pol_{\nn-1}(\rr_1 \wdots \rr_{\nn-2}, \rr_\nn) = \ss$, whence $\Pol_\nn(\rr_1 \wdots \rr_\nn) = \ss_\nn$.

\ITEM3 Again an induction on~$\nn$. For $\nn = 1$ there is nothing to prove. For $\nn =\nobreak 2$,  we find $\sst_1 = \ss_2 \op \ss_1$ and $\sst_2 = \ss_1 \op \ss_1$, and $\ss_1 = \ss_2$ implies $\sst_1 = \sst_2$. Conversely, assume $\ss_1 \op \ss_2 = \ss_2 \op \ss_1$. Using the assumption twice and the RC-law, we obtain $(\ss_1 \op \ss_2) \op (\ss_2 \op \ss_2) = (\ss_2 \op \ss_1) \op (\ss_2 \op \ss_2) = (\ss_1 \op \ss_2) \op (\ss_1 \op \ss_2) = (\ss_1 \op \ss_2) \op (\ss_2 \op \ss_1)$. As left-translations are injective, we deduce $\ss_2 \op \ss_2 = \ss_2 \op \ss_1$, and $\ss_2 = \ss_1$.  Assume now $\nn \ge 3$. Fix~Ê$\ii, \jj$, write~$\vec\ss$ for $\ss_1 \wdots \widehat{\ss_\ii} \wdots \widehat{\ss_\jj} \wdots \ss_\nn$ and put $\tt_\kk = \Pol_{\nn-1}(\vec\ss, \ss_\kk)$. Then, by~\ITEM1 and by definition, we have  $\sst_\ii = \Pol_\nn(\vec\ss, \ss_\jj, \ss_\ii) = \Pol_{\nn-1}(\vec\ss, \ss_\jj) \op \Pol_{\nn-1}(\vec\ss, \ss_\ii) = \tt_\jj \op \tt_\ii$, and, similarly, $\sst_\jj = \tt_\ii \op \tt_\jj$. If $\ss_\ii = \ss_\jj$ holds, we have $\tt_\ii = \tt_\jj$, whence $\sst_\ii = \sst_\jj$. Conversely, assume $\sst_\ii = \sst_\jj$, that is, $\tt_\jj \op \tt_\ii = \tt_\ii \op \tt_\jj$. By the result for $\nn = 2$, we deduce $\tt_\ii = \tt_\jj$, that is, $\Pol_{\nn-1}(\vec\ss, \ss_\ii) = \Pol_{\nn-1}(\vec\ss, \ss_\jj)$, which is an equality of the form $\rr_1 \op (... \op (\rr_{\nn-2} \op \ss_\ii) ... ) = \rr_1 \op (... \op (\rr_{\nn-2} \op \ss_\jj) ... )$. Repeatedly using the injectivity of left-translations, we deduce $\ss_\ii = \ss_\jj$.
\end{proof}

Further results appear when two operations connected by~\eqref{E:InvolRLC} are involved. In terms of~$\Pol_1$ and~$\Pol_2$, \eqref{E:InvolRLC} says that $\sst_1 = \Pol_2(\ss_1, \ss_2)$ and $\sst_2 = \Pol_2(\ss_2, \ss_1)$ imply $\ss_1 = \Polt_2(\sst_1, \sst_2)$ and $\ss_2 = \Polt_2(\sst_2, \sst_1)$: two elements can be retrieved from their~$\Pol_2$ images using the monomial~$\Polt_2$. Here is an $\nn$-variable version of this result.

\begin{lemm}
\label{L:IterInvol}
Assume that $(\SS, \op, \opL)$ is an RLC-system and $\ss_1 \wdots \ss_\nn$ belong to~$\SS$. For $1 \le \ii \le \nn$, put $\sst_\ii = \Pol_\nn(\ss_1 \wdots \widehat{\ss_\ii}, \wdots \ss_\nn, \ss_\ii)$. Then, for $1 \le \ii \le \nn$, and for every permutation~$\pi$ in~$\Sym_\nn$, we have
\begin{equation}
\label{E:IterInvol}
\Pol_\ii(\ss_{\pi(1)} \wdots \ss_{\pi(\ii)}) = \Polt_{\nn+1-\ii}(\sst_{\pi(\ii)} \wdots \sst_{\pi(\nn)}).
\end{equation}
\end{lemm}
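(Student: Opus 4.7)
The plan is to recast the statement in symmetric index-set form and argue by induction on $\mm=\nn-\ii$. By Lemma~\ref{L:Pol}, the value $\Pol_\ii(\ss_{\pi(1)}\wdots\ss_{\pi(\ii)})$ depends only on $\jj:=\pi(\ii)$ and on the unordered set $\pi(\{1\wdots\ii-1\})$; by the LC-counterpart of Lemma~\ref{L:Pol} (valid because $\opL$ obeys~\eqref{E:LC}), the value $\Polt_{\nn+1-\ii}(\sst_{\pi(\ii)}\wdots\sst_{\pi(\nn)})$ depends only on $\jj$ and on the unordered set $T:=\pi(\{\ii+1\wdots\nn\})$. So, enumerating $T=\{\kk_1\wdots\kk_\mm\}$ with $\mm=|T|$, it suffices to show, for every $\jj$ and every $T\subseteq\{1\wdots\nn\}\setminus\{\jj\}$, that
\[
\tt^{(T)}_\jj \;=\; \Polt_{\mm+1}(\sst_\jj,\sst_{\kk_1}\wdots\sst_{\kk_\mm}),
\]
where $\tt^{(T)}_\jj$ denotes $\Pol_{\nn-\mm}$ evaluated on the $\ss_\uu$ with $\uu\notin T\cup\{\jj\}$ followed by $\ss_\jj$ in the last slot.

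For $\mm=0$ the equality is immediate: $\tt^{(\emptyset)}_\jj=\sst_\jj$ by Lemma~\ref{L:Pol}, and the right-hand side reduces to $\Polt_1(\sst_\jj)=\sst_\jj$. For the inductive step I fix some $\kk\in T$ and set $T'=T\setminus\{\kk\}$, $T''=T'\cup\{\jj\}$. Expanding $\tt^{(T')}_\jj$ through the recursive definition of $\Pol$---after using Lemma~\ref{L:Pol} to arrange its last two arguments as $(\ss_\kk,\ss_\jj)$---gives
\[
\tt^{(T')}_\jj \;=\; \tt^{(T'')}_\kk \op \tt^{(T)}_\jj,
\]
and the same manipulation with $\jj$ and $\kk$ interchanged produces $\tt^{(T')}_\kk=\tt^{(T)}_\jj\op\tt^{(T'')}_\kk$. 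A single application of~\eqref{E:InvolRLC}, with $\xx=\tt^{(T)}_\jj$ and $\yy=\tt^{(T'')}_\kk$, then yields $\tt^{(T)}_\jj=\tt^{(T')}_\jj\opL\tt^{(T')}_\kk$.

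Invoking the induction hypothesis on~$T'$ expresses $\tt^{(T')}_\jj$ and $\tt^{(T')}_\kk$ as $\Polt_\mm$ applied to $\sst_\jj$, respectively $\sst_\kk$, followed by the same $\mm-1$ arguments enumerating $\{\sst_t:t\in T'\}$. Substituting into the identity above and reading the result through the recursive definition of $\Polt_{\mm+1}$---with $\sst_\jj$, $\sst_\kk$, and these $\mm-1$ values as its first, second, and subsequent arguments---identifies $\tt^{(T)}_\jj$ with a $\Polt_{\mm+1}$ expression, which by the LC-invariance of $\Polt$ in its last $\mm$ arguments equals $\Polt_{\mm+1}(\sst_\jj,\sst_{\kk_1}\wdots\sst_{\kk_\mm})$. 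This closes the induction. The main obstacle is purely notational bookkeeping---tracking which index is skipped versus placed last on each side, and checking that one top-level unpacking of $\Pol$ dovetails with one unpacking of $\Polt$ via a single use of~\eqref{E:InvolRLC}; no algebraic ingredient beyond the two invariance statements and~\eqref{E:InvolRLC} is required.
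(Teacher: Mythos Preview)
Your proof is correct and follows essentially the same route as the paper's. Both arguments induct on the same parameter (your $\mm=\nn-\ii$ increasing is the paper's $\ii$ decreasing), both peel off one level of~$\Pol$ to obtain a relation of the form $t'=y\op x$, $t=x\op y$ and then invoke~\eqref{E:InvolRLC} once to get $x=t'\opL t$, which is precisely one level of~$\Polt$. Your index-set notation $\tt^{(T)}_\jj$ makes the appeal to Lemma~\ref{L:Pol} and its LC-counterpart explicit up front, whereas the paper keeps the permutation~$\pi$ throughout, but the underlying computation is identical.
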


\begin{proof}
For $\nn = 1$, \eqref{E:IterInvol} reduces to $\ss_{\pi(1)} = \ss_{\pi(1)}$. Fix~$\nn \ge 2$ and use induction on~$\ii$ decreasing from~$\nn$ to~$1$. For $\ii = \nn$, \eqref{E:IterInvol} is $\Pol_\nn(\ss_{\pi(1)} \wdots \ss_{\pi(\ii)}) = \Polt_1(\sst_{\pi(\ii)})$. By Lemma~\ref{L:Pol}, the left term is $\Pol_\nn(\ss_1 \wdots \widehat{\ss_\ii} \wdots \ss_{\nn-1}, \ss_{\pi(\ii)})$, hence $\sst_{\pi(\ii)})$, <hence~\eqref{E:IterInvol}. Assume now $\ii < \nn$. Put  $\ss = \Pol_\ii(\ss_{\pi(1)} \wdots \ss_{\pi(\ii)})$, $\ss' = \Pol_\ii(\ss_{\pi(1)} \wdots \ss_{\pi(\ii-1)}, \ss_{\pi(\ii+1)})$, $\tt = \Pol_{\ii+1}(\ss_{\pi(1)} \wdots \ss_{\pi(\ii)}, \ss_{\pi(\ii+1)})$, and $\tt' = \Pol_{\ii+1}(\ss_{\pi(1)} \wdots \ss_{\pi(\ii-1)}, \ss_{\pi(\ii+1)}, \ss_{\pi(\ii)})$. Using the definition of~$\Pol_{\ii+1}$ from~$\Pol_\ii$, we find $\tt = \ss \op \ss'$ and $\tt' = \ss' \op \ss$, whence $\ss = \tt' \opL \tt$ and $\ss' = \tt \opL \tt$ by the involutivity law. Now the induction hypothesis gives
$$\tt = \Polt_{\nn-\ii}(\sst_{\pi(\ii+1)} \wdots \sst_{\pi(\nn)}), \qquad \tt' = \Polt_{\nn-\ii}(\sst_{\pi(\ii)}, \sst_{\pi(\ii+2)} \wdots \sst_{\pi(\nn)}).$$
Using the definition of~$\SPol_{\nn+1-\ii}$ from~$\SPol_{\nn-\ii}$, we find $\ss = \tt' \opL \tt = \Polt_{\nn+1-\ii}(\sst_{\pi(\ii)} \wdots \sst_{\pi(\nn)})$ (and $\ss' = \tt \opL \tt = \Polt_{\nn+1-\ii}(\sst_{\pi(\ii+1)}, \sst_{\pi(\ii)}, \sst_{\pi(\ii+2)} \wdots \sst_{\pi(\nn)})$), which is~\eqref{E:IterInvol}.
\end{proof}

We now introduce terms that involve, in addition to~$\op$ and~$\opL$, a third operation~$\cdot$ that will be evaluated into an associative product.

\begin{defi}
\label{D:SPol}
For $\nn \ge 1$, we introduce the formal expressions
\begin{gather}
\label{E:TBSPol}
\SPol_\nn(\xx_1 \wdots \xx_\nn) = \Pol_1(\xx_1) \cdot \Pol_2(\xx_1, \xx_2) \cdot \pdots \cdot \Pol_\nn(\xx_1 \wdots \xx_\nn)\\
\SPolt_\nn(\xx_1 \wdots \xx_\nn) = \Polt_\nn(\xx_1 \wdots \xx_\nn) \cdot \Polt_{\nn-1}(\xx_2 \wdots \xx_\nn) \cdot \pdots \cdot \Polt_1(\xx_\nn).
\end{gather}
\end{defi}

For $\nn \ge 2$, \eqref{E:TBSPol} implies $\SPol_\nn(\xx_1 \wdots \xx_\nn) = \SPol_{\nn-1}(\xx_1 \wdots \xx_{\nn-1}) \cdot \Pol_\nn(\xx_1 \wdots \xx_\nn)$. We shall consider below a monoid generated by~$\SS$ satisfying $\ss(\ss \op \tt) = \tt(\tt \op \ss)$, that is, $\SPol_2(\ss, \tt) = \SPol_2(\tt, \ss)$. Then we have the following iterated version.

\begin{lemm}
\label{L:SPol}
Assume that $(\SS, \op)$ is an RC-system and $\MM$ is a monoid including~$\SS$ in which $\SPol_2(\ss, \tt) = \SPol_2(\tt, \ss)$ holds for all $\ss, \tt$ in~$\SS$. Then the evaluation of~$\SPol_\nn$ in~$\MM$ is a symmetric function on~$\SS^\nn$.
\end{lemm}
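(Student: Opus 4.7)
The plan is to reduce the claim to the invariance of $\SPol_\nn$ under transpositions of two adjacent arguments, since adjacent transpositions generate~$\Sym_\nn$. Fix $\ii$ with $1\le \ii\le \nn-1$ and write out $\SPol_\nn(\ss_1\wdots\ss_\nn)$ as the product of its $\nn$ factors $\Pol_\jj(\ss_1\wdots\ss_\jj)$. I need to track how each factor is affected when $\ss_\ii$ and $\ss_{\ii+1}$ are swapped. For $\jj<\ii$, the factor $\Pol_\jj(\ss_1\wdots\ss_\jj)$ involves none of the swapped entries, hence is unchanged. For $\jj>\ii+1$, the swap induces a transposition of the first $\jj-1$ entries of $\Pol_\jj(\ss_1\wdots\ss_\jj)$ while fixing the last entry $\ss_\jj$, so Lemma~\ref{L:Pol} applies and the factor is unchanged too.

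It remains to analyze the two factors at positions $\ii$ and $\ii+1$. Setting
$$\uu = \Pol_\ii(\ss_1\wdots\ss_{\ii-1},\ss_\ii), \qquad \vv = \Pol_\ii(\ss_1\wdots\ss_{\ii-1},\ss_{\ii+1}),$$
the recursive definition of~$\Pol_{\ii+1}$ gives $\Pol_{\ii+1}(\ss_1\wdots\ss_{\ii+1}) = \uu\op\vv$ and, after the swap, $\Pol_{\ii+1}(\ss_1\wdots\ss_{\ii-1},\ss_{\ii+1},\ss_\ii) = \vv\op\uu$. Hence the product of the two affected factors is $\uu\cdot(\uu\op\vv) = \SPol_2(\uu,\vv)$ before the swap and $\vv\cdot(\vv\op\uu) = \SPol_2(\vv,\uu)$ after it. Because $\op$ is a binary operation on~$\SS$, the elements $\uu$ and~$\vv$ lie in~$\SS$, so the hypothesis $\SPol_2(\ss,\tt) = \SPol_2(\tt,\ss)$ for $\ss,\tt\in\SS$ applies and yields the required equality.

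No real obstacle arises: the only delicate point is verifying that Lemma~\ref{L:Pol} indeed suffices to absorb the changes among the $\jj>\ii+1$ factors, which is immediate since the induced permutation fixes the final argument $\ss_\jj$ of each such factor. Combined with the closure of~$\SS$ under~$\op$, which places $\uu$ and~$\vv$ in the range of validity of the hypothesis, this completes the reduction to the two-variable case.
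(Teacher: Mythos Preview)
Your argument is correct and matches the paper's proof essentially line for line: both reduce to adjacent transpositions, observe that factors with $\jj<\ii$ are untouched and factors with $\jj>\ii+1$ are handled by Lemma~\ref{L:Pol}, and then rewrite the product of the two central factors as $\SPol_2(\uu,\vv)=\SPol_2(\vv,\uu)$ with $\uu,\vv\in\SS$. The paper frames this as an induction on~$\nn$, but the inductive hypothesis is never actually invoked beyond the base case $\nn=2$, so your direct presentation is if anything slightly cleaner.
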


\begin{proof}
We use induction on~$\nn$. For $\nn = 1$, there is nothing to prove. For $\nn = 2$, the result is the equality $\ss_1(\ss_1 \op \ss_2) = \ss_2(\ss_2\op\ss_1)$, which is valid in~$\MM$ by assumption. Assume $\nn \ge 3$. As in Lemma~\ref{L:Pol}, it is sufficient to consider transpositions $(\ii, \ii+\nobreak1)$, that is, to compare $\SPol_\nn(\ss_1 \wdots \ss_\nn)$ and $\SPol_\nn(\ss_1 \wdots \ss_{\ii+1}, \ss_\ii \wdots \ss_\nn)$. By definition, $\SPol_\nn(\ss_1 \wdots \ss_\nn)$ is the product of the values $\Pol_\jj(\ss_1 \wdots \ss_\jj)$ for~$\jj$ increasing from~$1$ to~$\nn$, whereas $\SPol_\nn(\ss_1 \wdots \ss_{\ii+1}, \ss_\ii \wdots \ss_\nn)$ is a similar product of $\Pol_\jj(\ss'_1 \wdots \ss'_\jj)$ with $\ss'_\ii = \ss_{\ii+1}$, $\ss'_{\ii+1} = \ss_\ii$, and $\ss'_\kk = \ss_\kk$ for $\kk \not= \ii, \ii+1$. For $\jj < \ii$, the entries $\ss_\ii$ and~$\ss_{\ii+1}$ do not occur in~$\Pol_\jj(\ss_1 \wdots \ss_\jj)$ and $\Pol_\jj(\ss'_1 \wdots \ss'_\jj)$, which are therefore equal. For $\jj > \ii+1$, the expressions $\Pol_\jj(\ss_1 \wdots \ss_\jj)$ and $\Pol_\jj(\ss'_1 \wdots \ss'_\jj)$ differ by the permutation of two non-final entries, so they are equal by Lemma~\ref{L:Pol}. There remains to compare the central entries $\tt = \Pol_\ii(\ss_1 \wdots \ss_\ii) \cdot \Pol_{\ii+1}(\ss_1 \wdots \ss_{\ii+1})$ and $ \tt' = \Pol_\ii(\ss'_1 \wdots \ss'_\ii) \cdot \Pol_{\ii+1}(\ss'_1 \wdots \ss'_{\ii+1})$. Put $\rr = \Pol_\ii(\ss_1 \wdots \ss_\ii)$ and $\rr' = \Pol_\ii(\ss_1 \wdots \ss_{\ii-1}, \ss_{\ii+1})$. By definition of~$\ss'_\kk$, we have also $\rr = \Pol_\ii(\ss'_1 \wdots \ss'_{\ii-1}, \ss'_{\ii+1})$ and $\rr' = \Pol_\ii(\ss'_1 \wdots \ss'_\ii)$. Then, by definition of~$\Pol_\ii$ and~$\Pol_{\ii+1}$, we have $\tt = \rr(\rr\op\rr')$ and $\tt' = \rr'(\rr' \op\rr)$, whence $\tt = \tt'$ in~$\MM$.
\end{proof}

Lemma~\ref{L:SPol} says in particular that, when we start with $\nn$~elements $\ss_1 \wdots \ss_\nn$ and construct in the Cayley graph of~$\MM$ the $\nn$-cube displayed in Figure~\ref{F:CubePol}, then the cube converges to a unique final vertex and all maximal paths represent~$\SPol(\ss_1 \wdots \ss_\nn)$.

\begin{figure}[htb]
\begin{picture}(90,38)(0,-2)
\pcline{->}(1,15.5)(29.5,30)
\put(7,25){$\Pol_1(\ss_1)$}
\pcline{->}(1,14.5)(29.5,0)
\put(7,3){$\Pol_1(\ss_3)$}
\pcline{->}(1,15)(29.5,15)
\put(12,16.5){$\Pol_1(\ss_2)$}
\pcline{->}(31,30)(59,30)
\taput{$\Pol_2(\ss_1, \ss_2)$}
\pcline{->}(31,15.5)(59,29.5)
\put(25,21){$\Pol_2(\ss_2, \ss_1)$}
\pcline[style=exist]{->}(31,29.5)(59,15.5)
\put(50,21){$\Pol_2(\ss_1, \ss_3)$}
\pcline[style=exist]{->}(31,0.5)(59,14.5)
\put(50,7){$\Pol_2(\ss_3, \ss_1)$}
\pcline{->}(31,14.5)(59,0.5)
\put(25,7){$\Pol_2(\ss_2, \ss_3)$}
\pcline{->}(31,0)(59,0)
\tbput{$\Pol_2(\ss_3, \ss_2)$}
\pcline{->}(60.5,30)(89,15.5)
\put(72,25){$\Pol_3(\ss_1, \ss_2, \ss_3)$}
\pcline[style=exist]{->}(60.5,15)(89,15)
\put(62,16.5){$\Pol_3(\ss_1, \ss_3, \ss_2)$}
\pcline{->}(60.5,0)(89,14.5)
\put(72,3){$\Pol_3(\ss_2, \ss_3, \ss_1)$}
\end{picture}
\caption[]{\sf\smaller The monomials $\Pol_\ii$ occur at the $\ii$th level in an $\nn$-cube built from~$\ss_1 \wdots \ss_\nn$ using~$\op$ to form elementary squares (here $\nn =\nobreak 3$).}
\label{F:CubePol}
\end{figure}
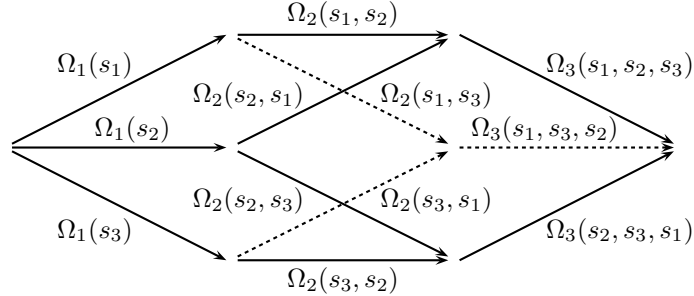

\begin{lemm}
\label{L:IterInvol2}
Assume that $(\SS, \op, \opL)$ is an RLC-system and $\MM$ is a monoid including~$\SS$ in which $\SPol(\ss, \tt) = \SPol_2(\tt, \ss)$ holds for all~$\ss, \tt$ in~$\SS$. Then, for all $\ss_1 \wdots \ss_\nn$ in~$\SS$, the equality $\SPol_\nn(\ss_1 \wdots \ss_\nn) = \SPolt_\nn(\sst_1 \wdots \sst_\nn)$ holds for $\sst_\ii = \Pol_\nn(\ss_1 \wdots \widehat{\ss_\ii}, \wdots \ss_\nn, \ss_\ii)$.
\end{lemm}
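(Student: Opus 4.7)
The plan is to obtain the stated identity as a direct consequence of the pointwise conversion formula provided by Lemma~\ref{L:IterInvol}. Specifically, I would apply that lemma with the identity permutation $\pi = \ID$ for each value of $\ii$ between $1$ and $\nn$, yielding
\[
\Pol_\ii(\ss_1 \wdots \ss_\ii) = \Polt_{\nn+1-\ii}(\sst_\ii, \sst_{\ii+1} \wdots \sst_\nn),
\]
where $\sst_\ii = \Pol_\nn(\ss_1 \wdots \widehat{\ss_\ii} \wdots \ss_\nn, \ss_\ii)$ exactly as in the present statement. Note that these equalities live in $\SS$ and require no monoid hypothesis beyond the RLC-system axioms.

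Next I would concatenate these $\nn$ equalities in $\MM$, multiplying them in the order $\ii = 1, 2, \ldots, \nn$. By the very definition of $\SPol_\nn$ in~\eqref{E:TBSPol}, the left-hand side of the resulting product is $\SPol_\nn(\ss_1 \wdots \ss_\nn)$. On the right-hand side, I read successively
\[
\Polt_\nn(\sst_1 \wdots \sst_\nn) \cdot \Polt_{\nn-1}(\sst_2 \wdots \sst_\nn) \cdot \pdots \cdot \Polt_1(\sst_\nn),
\]
which is exactly $\SPolt_\nn(\sst_1 \wdots \sst_\nn)$ by Definition~\ref{D:SPol}. This gives the desired identity.

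So there is essentially no obstacle: the whole computation is a termwise application of Lemma~\ref{L:IterInvol} followed by concatenation in $\MM$. The monoid hypothesis $\SPol_2(\ss, \tt) = \SPol_2(\tt, \ss)$ is not strictly needed for this particular equality (both sides are already well-defined words in the alphabet $\SS$), but it is stated for uniformity with Lemma~\ref{L:SPol} and because it will always hold in the intended application, namely the structure monoid of an RC-quasigroup. The only point one must be slightly careful about is bookkeeping the indices, since the $\ii$-th factor of $\SPolt_\nn$ is $\Polt_{\nn+1-\ii}(\sst_\ii \wdots \sst_\nn)$, which matches precisely the right-hand side produced by Lemma~\ref{L:IterInvol} for that value of $\ii$.
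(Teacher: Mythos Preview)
Your proof is correct and follows exactly the same route as the paper: expand $\SPol_\nn$ by definition, apply Lemma~\ref{L:IterInvol} with $\pi = \ID$ factor by factor, and recognize the resulting product as $\SPolt_\nn(\sst_1 \wdots \sst_\nn)$. Your remark that the hypothesis $\SPol_2(\ss,\tt) = \SPol_2(\tt,\ss)$ is not actually used in this particular argument is also accurate.
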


\begin{proof}
Using~\eqref{E:IterInvol} and the definitions of~$\SPol_\nn$ and $\SPolt_\nn$, we obtain
\begin{align*}
\SPol_\nn(\ss_1 \wdots \ss_\nn) 
&= \Pol_1(\ss_1) \cdot \Pol_2(\ss_1, \ss_2) \cdot \pdots \cdot \Pol_\nn(\ss_1 \wdots \ss_\nn)\\
&= \Polt_\nn(\sst_1 \wdots \sst_\nn) \cdot \Polt_{\nn-1}(\sst_2 \wdots \sst_\nn) \cdot \pdots \cdot \Polt_1(\sst_\nn) = \SPolt_\nn(\sst_1 \wdots \sst_\nn).
\rlap{\HS{1.2}$\square$}
\end{align*}
\let\qed=\relax\end{proof}
 
\section{The Garside structure}
\label{S:StrMonoid}

It was proved by F.\,Chouraqui~\cite{Cho} that the monoids associated with solutions of YBE (hence with bijective RC-quasigroups) are Garside monoids~\cite{Dgk} and that, conversely, every Garside monoid with a certain type of presentation arises in this way---see also \cite{GatGar}. Here we show how to easily derive such results from the computations of Section~\ref{S:RCCalculus}.

Let us first recall some terminology about divisibility relations in monoids. If $\MM$ is a (left)-cancellative monoid and $\ff, \gg$ belong to~$\MM$, we say that $\ff$ \emph{left-divides}~$\gg$ or, equivalently, that $\gg$ is a \emph{right-multiple} of~$\gg$, denoted $\ff \dive \gg$, if $\ff \gg' = \gg$ holds for some~$\gg'$ in~$\MM$. If $1$ is the only invertible element in~$\MM$, the relation~$\dive$ is a partial ordering on~$\MM$. We say that $\hh$ is a \emph{least common right-multiple}, or \emph{right-lcm}, of~$\ff$ and~$\gg$ if $\hh$ is a least upper bound of~$\ff$ and~$\gg$ with respect to~$\dive$. Provided $1$ is the only invertible element in~$\MM$, the right-lcm is unique when it exists. If $\ff$ and~$\gg$ admit a right-lcm, the \emph{right-complement}~$\ff \under\gg$ of~$\ff$ in~$\gg$ is the unique element~$\gg'$ such that $\ff\gg'$ is the right-lcm of~$\ff$ and~$\gg$. As mentioned in Example~\ref{X:RC}, $\under$ obeys the RC-law (whenever defined). Of course, symmetric counterparts involve the right-divisibility relation, where $\ff$ \emph{right-divides}~$\gg$ if $\gg = \gg' \ff$ holds for some~$\gg'$.

If $\MM$ is a monoid, a \emph{length function} on~$\MM$ is a map $\lambda: \MM \to \NNNN$ such that $\gg \not= 1$ implies $\lambda(\gg) \ge 1$ and $\lambda(\gg\hh) = \lambda(\gg) + \lambda(\hh)$ always holds (we say \emph{pseudolength} when $=$ is relaxed into~$\ge$ in the last formula). Note that the existence of a (pseudo)-length function on~$\MM$ implies that $1$ is the only invertible element in~$\MM$.

Our first (new) observation is that the connection between the RC-law and the so-called cube condition in Garside theory makes it extremely easy to establish the basic properties of the monoid associated with an RC-quasigroup.

\begin{prop}
\label{P:Cancel}
Assume that $(\SS, \op)$ is an RC-quasigroup and $\MM$ is the associated monoid. Then the monoid~$\MM$ admits a length function, it is left-cancellative, and any two elements of~$\MM$ admit a unique right-lcm and a unique left-gcd. Moreover, $(\SS, \op)$ can be retrieved from~$\MM$: the set~$\SS$ is the set of atoms of~$\MM$ and, for $\ss \not= \tt$, the value of~$\ss\op\tt$ is the right-complement~$\ss\under\tt$ in~$\MM$ and the value of~$\ss\op\ss$ is the unique element of~$\SS \setminus \{\ss\under\tt \mid \tt\neq\ss\in\SS\}$.
\end{prop}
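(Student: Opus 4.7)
The plan is to apply the subword-reversing machinery for right-complemented presentations, with the RC-law playing the exact role of the cube condition. First, I would observe that every defining relation $\ss(\ss\op\tt) = \tt(\tt\op\ss)$ equates two words of length two over~$\SS$, so the free-monoid word length descends to a well-defined additive map $\lambda : \MM \to \NNNN$, which is a length function on~$\MM$. This immediately shows that $1$ is the only invertible element and, by inspecting length-one elements and the shape of the relations, that the atoms of~$\MM$ are exactly (the images of) the generators $\ss \in \SS$.

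For left-cancellativity and the existence of right-lcms, I would invoke the standard criterion for right-complemented presentations (as formulated in Dehornoy's framework of subword reversing, see \emph{Foundations of Garside Theory}): the presentation~\eqref{E:Str} is right-complemented, since for every ordered pair $(\ss, \tt)$ of distinct generators there is exactly one relation of the form $\ss \cdot u = \tt \cdot v$, yielding the complement map $\ss \under \tt := \ss \op \tt$. In the presence of a length function (which forces Noetherianity of left-divisibility), left-cancellativity and right-lcms follow from the \emph{cube condition}, i.e., the requirement that for every triple $\ss_1, \ss_2, \ss_3 \in \SS$ the three pairwise right-lcm squares glue into a commutative $3$-cube. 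Unfolding the parallel paths of this cube using the complement map shows that the condition is exactly $\Pol_3(\ss_1, \ss_2, \ss_3) = \Pol_3(\ss_2, \ss_1, \ss_3)$, which is the $\nn = 3$ case of Lemma~\ref{L:Pol} and, in turn, an immediate consequence of the RC-law. Hence the cube condition holds and both left-cancellativity and the existence of right-lcms are established.

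The existence of left-gcds then follows by a standard lattice-theoretic argument: in any left-cancellative Noetherian monoid admitting right-lcms, the set of common left-divisors of any two elements is closed under right-lcm (using left-cancellativity), and Noetherianity of~$\dive$ yields a maximum, namely the left-gcd. Finally, the retrieval of $(\SS, \op)$ from~$\MM$ is essentially tautological. The atoms are $\SS$; for $\ss \neq \tt$ the defining relation exhibits $\ss(\ss \op \tt)$ as a common right-multiple of $\ss$ and~$\tt$, and the length function forces this length-two word to be the right-lcm, so $\ss \under \tt = \ss \op \tt$. Since $(\SS, \op)$ is an RC-quasigroup, the left-translation $\tt \mapsto \ss \op \tt$ is a bijection of~$\SS$, so $\{\ss \op \tt \mid \tt \neq \ss\}$ misses exactly one element, which must be $\ss \op \ss$. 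The main obstacle is isolating the cube condition cleanly within the subword-reversing framework; once this is done, the identification of the condition with the RC-law is automatic, which is precisely why the RC-law is the ``right'' algebraic hypothesis in this context.
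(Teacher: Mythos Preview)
Your proposal is correct and follows essentially the same route as the paper: both derive the length function from the homogeneity of the relations, invoke the right-complemented presentation criterion with the cube condition identified as the RC-law (your formulation via $\Pol_3(\ss_1,\ss_2,\ss_3)=\Pol_3(\ss_2,\ss_1,\ss_3)$ is just a notational repackaging of the equality $\RC(\RC(\rr,\ss),\RC(\rr,\tt))=\RC(\RC(\ss,\rr),\RC(\ss,\tt))$ used in the paper), obtain left-gcds by the standard Noetherian lattice argument, and retrieve $(\SS,\op)$ from the atom set and the right-complement operation. The paper cites the criterion as \cite[Prop.~6.1 and~6.9]{Dgp} (or \cite[Prop.~II.4.16]{Garside}), which also directly yields that the right-lcm of distinct atoms $\ss,\tt$ is $\ss(\ss\op\tt)$, so your separate length-two argument for this point is not needed.
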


\begin{proof}
The relations of~\eqref{E:Str} preserve the length, so the length of $\SS$-words induces a length function on~$\MM$. 

Next, the presentation~\eqref{E:Str} contains exactly one relation of the form $\ss... = \tt...$ for each pair of generators~$\ss, \tt$ in~$\SS$. There exists for such presentations, which are called \emph{right-complemented}, a general approach that enables one to easily establish properties of the associated monoid. Assume we consider a monoid generated by a set~$\SS$ and relations of the form $\ss \RC(\ss, \tt) = \tt \RC(\tt, \ss)$ where $\RC$ maps~$\SS \times \SS$ to~$\SS$. Then \cite[Prop.~6.1 and~6.9]{Dgp} (or \cite[Prop.~II.4.16]{Garside}) says that, when the ``cube condition''
\begin{equation}
\label{E:Cube}
\RC(\RC(\rr, \ss), \RC(\rr, \tt)) = \RC(\RC(\ss, \rr), \RC(\ss, \tt))
\end{equation}
holds for all~$\rr, \ss, \tt$ in~$\SS$, the involved monoid is left-cancellative, any two of its elements admit a right-lcm, and the right-lcm of distinct elements~$\ss, \tt$ of~$\SS$ is $\ss (\ss \op \tt)$ (and $\tt (\tt \op \ss)$). In the current case of~$\MM$, the map~$\RC$ coincides with the operation~$\op$, and the assumption that $(\SS, \op)$ obeys the RC-law guarantees that \eqref{E:Cube} is satisfied. Hence $\MM$ is left-cancellative and any two elements of~$\MM$ admit a right-lcm. Standard general arguments then show that any two elements also admit a left-gcd, that is, a greatest lower bound with respect to the left-divisibility relation.

Finally, as there is no relation involving a word of length one in~\eqref{E:Str}, the elements of~$\SS$ are atoms, and every element not lying in~$\SS \cup \{1\}$ is not an atom. So $\SS$ is the atom set of~$\MM$. Next, for distinct $\ss, \tt$ in~$\SS$, the right-lcm of~$\ss$ and~$\tt$ is $\ss (\ss\op\tt)$, so, by definition, $\ss\under\tt$ is equal to~$\ss\op\tt$. Thus all nondiagonal values~$\ss\op\tt$ can be retrieved from~$\MM$. Finally, all left-translations of $(\SS, \op)$ are one-to-one, so $\ss\op\ss$ must be the unique element of $\SS \setminus \{\ss\op\tt \mid \ss, \tt\in \SS, \ss\neq\tt\}$, that is, of $\SS \setminus \{\ss\under\tt \mid \tt\neq\ss\in\SS\}$.
\end{proof}

\begin{coro}
\label{C:Ore}
Assume that $(\SS, \op)$ is a bijective RC-quasigroup and $\MM, \GG$ are the associated structure monoid and group. Then $\MM$ admits unique left- and right-lcms and left- and right-gcds, it is a Ore monoid, and $\GG$ is a group of left- and right-fractions for~$\MM$; this group is torsion-free. 
\end{coro}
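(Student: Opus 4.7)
The plan is to leverage Proposition~\ref{P:Cancel} for the ``right-hand'' properties (left-cancellativity, right-lcms, left-gcds) and then use the second operation~$\opL$ to obtain the ``left-hand'' properties by a symmetric argument. Since $(\SS, \op)$ is bijective, Lemma~\ref{L:SecondOp} produces an operation~$\opL$ turning $(\SS, \op, \opL)$ into an RLC-system; in particular, $\opL$ obeys the LC-law~\eqref{E:LC} and the involutivity relations~\eqref{E:InvolRLC} hold.

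Next, I would rewrite the defining relations of~$\MM$ using~$\opL$. Given $\ss, \tt$ in~$\SS$, set $\aa = \ss\op\tt$ and $\bb = \tt\op\ss$; by~\eqref{E:InvolRLC} one has $\ss = \bb\opL\aa$ and $\tt = \aa\opL\bb$. Hence the defining relation $\ss(\ss\op\tt) = \tt(\tt\op\ss)$ takes the form $(\bb\opL\aa)\,\aa = (\aa\opL\bb)\,\bb$. Because $(\ss,\tt) \mapsto (\aa,\bb)$ is a bijection of $\SS\times\SS$ to itself (this is precisely the bijectivity hypothesis reread via Lemma~\ref{L:SecondOp}), this rewriting yields an equivalent \emph{left}-complemented presentation of~$\MM$, governed by~$\opL$. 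The LC-law for~$\opL$ plays on this mirror presentation exactly the role the RC-law played in the proof of Proposition~\ref{P:Cancel}: it furnishes the dual cube condition, so the mirror image of \cite[Prop.~6.1 and~6.9]{Dgp} yields right-cancellativity together with the existence of unique left-lcms in~$\MM$, from which unique right-gcds follow by standard lattice arguments.

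At this point $\MM$ is cancellative on both sides and any two elements admit a common right-multiple and a common left-multiple, so the Ore conditions are satisfied in both directions. Ore's theorem then gives that $\MM$ embeds in a group of left fractions and in a group of right fractions, and both coincide with the structure group~$\GG$ (which is determined by the same presentation). Thus $\GG$ is a group of left- and right-fractions for~$\MM$.

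Finally, torsion-freeness: $\MM$ possesses all the features of a Garside monoid, namely bilateral cancellativity, a genuine length function, and bilateral lcms, so $\GG$ is a Garside group in the sense of~\cite{Dgp}, and Garside groups are known to be torsion-free; alternatively one can invoke the $I$-structure developed in Section~\ref{S:IStructure}, which transports~$\GG$ onto the torsion-free group~$\ZZZZ^\nn$ at the set-theoretic level in a manner rigid enough to rule out finite-order elements. The main obstacle I anticipate is checking cleanly that the change of variables $(\ss,\tt) \leftrightarrow (\aa,\bb)$ really converts the RC-cube condition into the LC-cube condition required by the left-complemented framework; once this mirror symmetry is confirmed, the Ore and torsion-freeness conclusions are routine.
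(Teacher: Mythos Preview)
Your approach is essentially the paper's own: rewrite the presentation via~$\opL$ using the involutivity relations (this is exactly the paper's~\eqref{E:LStr}), then apply the mirror of Proposition~\ref{P:Cancel}---the paper phrases this as ``apply Proposition~\ref{P:Cancel} to~$\MM\opp$ and the RC-quasigroup $(\SS, \opL\opp)$'', which is your ``dual cube condition'' in different words.

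One small point on torsion-freeness: your Garside-group route presupposes a Garside element with finitely many divisors, which is not available at this stage (and would require $\SS$ finite, not assumed in the corollary), while your $I$-structure alternative is a forward reference. The paper instead invokes~\cite{Dha}: since $\MM$ has a length function it is torsion-free as a monoid, and the group of fractions of a torsion-free lcm monoid is torsion-free. This works without finiteness and without anticipating later sections.
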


\begin{proof}
Proposition~\ref{P:Cancel} guarantees left-cancellativity and existence of right-lcms and left-gcds in~$\MM$. Now, by Lemma~\ref{L:SecondOp}, there exists a second operation~$\opL$ on~$\SS$ such that $(\SS, \op, \opL)$ is an RLC-quasigroup. By construction, the presentation
\begin{equation}
\label{E:LStr}
\PRESp\SS{\{(\ss \opL \tt) \tt = (\tt \opL \ss) \ss \mid \ss \not= \tt \in \SS\}},
\end{equation}
coincides with the one of~\eqref{E:Str}. Proposition~\ref{P:Cancel} applied to the opposed monoid~$\MM\opp$ and to the RC-quasigroup~$(\SS, \opL\opp)$ implies that $\MM\opp$ is left-cancellative and admits right-lcms and left-gcds, hence $\MM$ is right-cancellative and admits left-lcms and right-gcds. Hence $\MM$ is in particular a Ore monoid (that is, a cancellative monoid where any two elements admit common left- and right-multiples). By a classical theorem of Ore~\cite{ClP}, its enveloping group~$\GG$, which admits as a group the presentation~\eqref{E:Str}, is a group of left- and right-fractions for~$\MM$. It is then known~\cite{Dha} that the group of fractions of a torsion-free monoid is torsion-free. 
\end{proof}

We can now use the RC-calculus of Section~\ref{S:RCCalculus} to easily describe the right-lcms of atoms in terms of the ``polynomials''~$\SPol_\nn$, thus avoiding the developments of~\cite{Cho}.

\begin{lemm}
\label{L:Lcm}
Assume that $(\SS, \op)$ is an RC-quasigroup and $\MM$ is the associated monoid. Then elements $\ss_1 \wdots \ss_\nn$ of~$\SS$ are pairwise distinct if and only if the element $\SPol_\nn(\ss_1 \wdots \ss_\nn)$ is the right-lcm of~$\ss_1 \wdots \ss_\nn$ in~$\MM$. In this case, if, in addition, $(\SS, \op)$ is bijective, $\SPol_\nn(\ss_1 \wdots \ss_\nn)$ is also the left-lcm of the elements $\sst_1 \wdots \sst_\nn$ defined by $\sst_\ii = \Pol_\nn(\ss_1 \wdots \widehat{\ss_\ii} \wdots \ss_\nn, \ss_\ii)$.
\end{lemm}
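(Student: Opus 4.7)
The plan is to establish the first equivalence (pairwise distinctness $\Longleftrightarrow$ $\SPol_\nn$ is the right-lcm) by induction on~$\nn$, and then derive the left-lcm assertion from its opposite via Lemma~\ref{L:IterInvol2}. The cases $\nn = 1, 2$ of the first equivalence follow from Proposition~\ref{P:Cancel}.

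Assume the first equivalence for smaller values and fix pairwise distinct $\ss_1, \ldots, \ss_\nn$. First, each $\ss_\ii$ left-divides $\SPol_\nn(\ss_1, \ldots, \ss_\nn)$: by the symmetry of~$\SPol_\nn$ proved in Lemma~\ref{L:SPol}, we rewrite this product as $\SPol_\nn(\ss_\ii, \ss_1, \ldots, \widehat{\ss_\ii}, \ldots, \ss_\nn)$, whose first factor is $\Pol_1(\ss_\ii) = \ss_\ii$. Next, let $\hh$ be any common right-multiple of the~$\ss_\ii$. The induction hypothesis, applied to the distinct $(\nn-1)$-sequences $(\ss_1, \ldots, \ss_{\nn-1})$ and $(\ss_1, \ldots, \ss_{\nn-2}, \ss_\nn)$, shows that both $\ff := \SPol_{\nn-1}(\ss_1, \ldots, \ss_{\nn-1})$ and $\gg := \SPol_{\nn-1}(\ss_1, \ldots, \ss_{\nn-2}, \ss_\nn)$ left-divide~$\hh$. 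Both factor as $\ee \aa$ and $\ee \bb$, where $\ee := \SPol_{\nn-2}(\ss_1, \ldots, \ss_{\nn-2})$, $\aa := \Pol_{\nn-1}(\ss_1, \ldots, \ss_{\nn-1})$, and $\bb := \Pol_{\nn-1}(\ss_1, \ldots, \ss_{\nn-2}, \ss_\nn)$. By Lemma~\ref{L:Distinct}\ITEM1 and $\ss_{\nn-1} \neq \ss_\nn$, we have $\aa \neq \bb$, so by Proposition~\ref{P:Cancel} their right-lcm in~$\MM$ is $\aa (\aa \op \bb) = \aa \cdot \Pol_\nn(\ss_1, \ldots, \ss_\nn)$. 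Left-cancellativity then gives the right-lcm of $\ff$ and $\gg$ as $\ee \aa \cdot \Pol_\nn(\ss_1, \ldots, \ss_\nn) = \SPol_\nn(\ss_1, \ldots, \ss_\nn)$, which therefore left-divides~$\hh$. For the converse, if $\ss_\ii = \ss_\jj$ with $\ii \neq \jj$, then the right-lcm of $\ss_1, \ldots, \ss_\nn$ coincides with that of the set of distinct values (having $k < \nn$ elements), and by the already proved forward direction it has length exactly $k < \nn$ in~$\MM$; since $\SPol_\nn(\ss_1, \ldots, \ss_\nn)$ is a product of $\nn$ atoms, hence of length~$\nn$, it cannot be the right-lcm.

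For the left-lcm assertion, bijectivity of $(\SS, \op)$ together with Lemma~\ref{L:SecondOp} yields a companion operation $\opL$ making $(\SS, \op, \opL)$ an RLC-quasigroup. Lemma~\ref{L:IterInvol2} then gives $\SPol_\nn(\ss_1, \ldots, \ss_\nn) = \SPolt_\nn(\sst_1, \ldots, \sst_\nn)$ in~$\MM$, and Lemma~\ref{L:Distinct}\ITEM3 ensures that the $\sst_\ii$ are pairwise distinct. As observed in the proof of Corollary~\ref{C:Ore}, the opposite monoid~$\MM\opp$ is the structure monoid of the RC-quasigroup $(\SS, \opL\opp)$ defined by $\ss \opL\opp \tt := \tt \opL \ss$. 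Applying the already proved first part of the lemma to this quasigroup and to the distinct sequence $(\sst_\nn, \ldots, \sst_1)$ identifies the right-lcm of these elements in~$\MM\opp$ as $\SPol_\nn^{\opL\opp}(\sst_\nn, \ldots, \sst_1)$. A straightforward induction on~$\nn$ shows that $\Pol_\nn^{\opL\opp}(\xx_1, \ldots, \xx_\nn) = \Polt_\nn(\xx_\nn, \ldots, \xx_1)$; combined with the reversal of product order between~$\MM\opp$ and~$\MM$, this translates the expression into $\SPolt_\nn(\sst_1, \ldots, \sst_\nn)$ when read in~$\MM$. Since the right-lcm in~$\MM\opp$ equals the left-lcm in~$\MM$, the left-lcm of the $\sst_\ii$ is $\SPolt_\nn(\sst_1, \ldots, \sst_\nn) = \SPol_\nn(\ss_1, \ldots, \ss_\nn)$, as desired.

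The main obstacle is the bookkeeping in the last paragraph: keeping straight the order of factors when passing to~$\MM\opp$ and verifying the identification $\Pol_\nn^{\opL\opp}(\xx_1, \ldots, \xx_\nn) = \Polt_\nn(\xx_\nn, \ldots, \xx_1)$. Once this symmetry is in place, the left-lcm claim drops out from the first part applied to the opposite structure, with Lemma~\ref{L:IterInvol2} providing the bridge back to $\SPol_\nn(\ss_1, \ldots, \ss_\nn)$.
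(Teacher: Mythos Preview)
Your proof is correct. The converse direction and the left-lcm assertion are handled exactly as in the paper (length count for the converse, passage to the LC-structure via Lemma~\ref{L:SecondOp} and Lemma~\ref{L:IterInvol2} for the left-lcm).

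For the forward implication, however, you organize the induction differently. The paper introduces the auxiliary polynomials~$\Pol'_\ii$ and~$\SPol'_\ii$ built from the right-complement~$\under$ in place of~$\op$, and proves by induction on~$\ii$ that $\Pol_\ii(\ss_{\pi(1)},\ldots,\ss_{\pi(\ii)}) = \Pol'_\ii(\ss_{\pi(1)},\ldots,\ss_{\pi(\ii)})$ for every permutation~$\pi$; since $\SPol'_\nn$ is tautologically the right-lcm, the claim follows. Your argument instead computes the right-lcm directly: you show $\SPol_\nn = \mathrm{lcm}(\ff,\gg)$ with $\ff = \SPol_{\nn-1}(\ss_1,\ldots,\ss_{\nn-1})$ and $\gg = \SPol_{\nn-1}(\ss_1,\ldots,\ss_{\nn-2},\ss_\nn)$, using the factorizations $\ff=\ee\aa$, $\gg=\ee\bb$ and the rule $\mathrm{lcm}(\ee\aa,\ee\bb)=\ee\,\mathrm{lcm}(\aa,\bb)$ in a left-cancellative monoid. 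Both routes rely on the same two ingredients (Lemma~\ref{L:Distinct}\ITEM1 for distinctness of the intermediate atoms, Proposition~\ref{P:Cancel} for $\ss\under\tt=\ss\op\tt$ when $\ss\neq\tt$), so the difference is largely organizational: the paper's version makes the identity $\Pol_\ii=\Pol'_\ii$ available as a standalone fact, while yours is slightly more self-contained and avoids introducing the primed polynomials.
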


\begin{proof}
Assume that $\ss_1 \wdots \ss_\nn$ are pairwise distinct in~$\SS$. Let $\Pol'_\nn$ and~$\SPol'_\nn$ be the counterparts of~$\Pol_\nn$ and~$\SPol_\nn$ respectively where the right-complement operation~$\under$ replaces~$\op$. We first prove using induction on~$\ii$ the equality 
\begin{equation}
\label{E:Lcm}
\Pol_\ii(\ss_{\pi(1)} \wdots \ss_{\pi(\ii)}) = \Pol'_\ii(\ss_{\pi(1)} \wdots \ss_{\pi(\ii)})
\end{equation}
for every~$\ii$ and every permutation~$\pi$ in~$\Sym_\ii$. For $\ii = 1$, we have $\Pol_1(\ss_{\pi(1)}) = \ss_{\pi(1)} = \Pol'_1(\ss_{\pi(1)})$, and the result is straightforward. Assume $\ii \ge 2$. Put $\ss = \Pol_\ii(\ss_{\pi(1)} \wdots \ss_{\pi(\ii)})$, $\ss' = \Pol_\ii(\ss_{\pi(1)} \wdots \ss_{\pi(\ii-2)}, \ss_{\pi(\ii)}, \ss_{\pi(\ii-1)})$, $\tt = \Pol_\ii(\ss_{\pi(1)} \wdots \ss_{\pi(\ii)})$, and $\tt' = \Pol_{\ii-1}(\ss_{\pi(1)} \wdots \ss_{\pi(\ii-2)}, \ss_{\pi(\ii)})$. By definition of~$\Pol_\ii$ from~$\Pol_{\ii-1}$, we have $\ss = \tt \op \tt'$ and $\ss' = \tt' \op \tt$. By Lemma~\ref{L:Distinct} applied to $(\ss_{\pi(1)} \wdots \ss_{\pi(\ii)})$, the assumption $\ss_{\pi(\ii-1)} \not= \ss_{\pi(\ii)}$ implies $\ss \not= \ss'$, which implies $\tt \op \tt' \not= \tt' \op \tt$. By Proposition~\ref{P:Cancel}, the latter relation implies $\tt \op \tt' = \tt \under \tt'$ and $\tt'\op\tt = \tt' \under \tt$ in~$\MM$. The induction hypothesis implies $\tt = \Pol'_\ii(\ss_{\pi(1)} \wdots \ss_{\pi(\ii)})$ and $\tt' = \Pol'_{\ii-1}(\ss_{\pi(1)} \wdots \ss_{\pi(\ii-2)}, \ss_{\pi(\ii)})$, so we deduce $\ss = \tt \under \tt' = (\Pol'_\ii(\ss_{\pi(1)} \wdots \ss_{\pi(\ii)})) \under (\Pol'_{\ii-1}(\ss_{\pi(1)} \wdots \ss_{\pi(\ii-2)}, \ss_{\pi(\ii)}))$, that is, $\ss = \Pol'_\ii(\ss_{\pi(1)} \wdots \ss_{\pi(\ii)})$. Then \eqref{E:Lcm} implies $\SPol_\nn(\ss_1 \wdots \ss_\nn) = \SPol'_\nn(\ss_1 \wdots \ss_\nn)$, meaning that $\SPol_\nn(\ss_1 \wdots \ss_\nn)$ is the right-lcm of~$\ss_1 \wdots \ss_\nn$ since a trivial induction shows that $\SPol'_\nn(\ss_1 \wdots \ss_\nn)$ is the right-lcm of~$\ss_1 \wdots \ss_\nn$ for every~$\nn$. 

For the other direction, let $\nn'$ be the cardinal of $\{\ss_1 \wdots \ss_\nn\}$. The result above implies that, if $\II$ is a cardinal~$\nn'$ subset of~$\SS$, then the right-lcm~$\Delta_\II$ of~$\II$ has length~$\nn'$ in~$\MM$. So, if $\nn' < \nn$ holds, the right-lcm of $\{\ss_1 \wdots \ss_\nn\}$ is an element of~$\MM$ that has length~$\nn'$, and it cannot be $\SPol_\nn(\ss_1 \wdots \ss_\nn)$ which, by definition, has length~$\nn$. 

Finally, assume that $(\SS, \op)$ is bijective and $\ss_1 \wdots \ss_\nn$ are pairwise distinct. Let~$\opL$ be the second operation provided by Lemma~\ref{L:SecondOp}. Then $(\SS, \op, \opL)$ is an RLC-quasigroup. By Lemma~\ref{L:Distinct}, $\sst_1 \wdots \sst_\nn$ are pairwise distinct. Then $(\SS, \opL)$ is an LC-quasigroup, so the counterpart of the above results implies that $\SPolt_\nn(\sst_1 \wdots \sst_\nn)$ is a left-lcm of $\sst_1 \wdots \sst_\nn$ in~$\MM$. Now, by Lemma~\ref{L:IterInvol2}, $\SPolt_\nn(\sst_1 \wdots \sst_\nn)$ is equal to~$\SPol_\nn(\ss_1 \wdots \ss_\nn)$.
\end{proof}

A \emph{Garside family}~\cite{Dif} in a monoid~$\MM$ is a generating family~$\Sigma$ such that every element~$\gg$ of~$\MM$ admits a (unique) $\Sigma$-normal decomposition, meaning a sequence $(\ss_1 \wdots \ss_\pp)$ such that $\gg = \ss_1 \pdots \ss_\pp$ holds and $\ss_\ii$ is the greatest left-divisor of~$\ss_\ii \pdots \ss_\pp$ lying in~$\Sigma$ for every~$\ii$. 

\begin{prop}
\label{P:GarFam}
Assume that $(\SS, \op)$ is a bijective RC-quasigroup and $\MM, \GG$ are the associated monoid and group. Then the right-lcm~$\Delta_\II$ of a cardinal~$\nn$ subset~$\II$ of~$\SS$ belongs to~$\SS^\nn$, it is the left-lcm of (another) cardinal~$\nn$ subset of~$\SS$, the map $\II \mapsto \Delta_\II$ is injective, and its image is the smallest Garside family of~$\MM$ that contains~$1$. 
\end{prop}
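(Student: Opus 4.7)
The plan is to dispatch the four claims in turn, exploiting Lemma~\ref{L:Lcm} to reduce to the explicit formula $\Delta_\II = \SPol_\nn(\ss_1 \wdots \ss_\nn)$ and the length function from Proposition~\ref{P:Cancel} to control sizes. Writing $\II = \{\ss_1 \wdots \ss_\nn\}$ with pairwise distinct atoms, this formula places~$\Delta_\II$ in~$\SS^\nn$ by construction, and the second half of Lemma~\ref{L:Lcm} expresses~$\Delta_\II$ as the left-lcm of the elements $\sst_\ii = \Pol_\nn(\ss_1 \wdots \widehat{\ss_\ii} \wdots \ss_\nn, \ss_\ii)$, which are pairwise distinct by Lemma~\ref{L:Distinct}\ITEM3 and thus form the required cardinality-$\nn$ subset.

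For injectivity, I plan to recover~$\II$ from~$\Delta_\II$ as $\{\ss \in \SS \mid \ss \dive \Delta_\II\}$. One inclusion is the definition of right-lcm; for the other, if some $\ss \in \SS \setminus \II$ left-divided~$\Delta_\II$, then $\Delta_{\II \cup \{\ss\}}$ would left-divide~$\Delta_\II$ too, contradicting the fact just established that the former has length~$\nn{+}1$ and the latter~$\nn$.

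For the Garside-family assertion, let $\Sigma$ denote the image of $\II \mapsto \Delta_\II$. It contains~$1$ (from $\II = \emptyset$) and every atom (from singletons), so generates~$\MM$. Given $\gg \in \MM$, the set $\II(\gg) = \{\ss \in \SS \mid \ss \dive \gg\}$ is finite by the length function; then $\Delta_{\II(\gg)}$ left-divides~$\gg$ by definition of right-lcm, while any $\Delta_\JJ$ left-dividing~$\gg$ forces $\JJ \subseteq \II(\gg)$, hence $\Delta_\JJ \dive \Delta_{\II(\gg)}$. Thus $\Delta_{\II(\gg)}$ is the greatest element of~$\Sigma$ left-dividing~$\gg$, and iterating yields a $\Sigma$-normal decomposition, so $\Sigma$ is a Garside family. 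Conversely, if $\Sigma' \ni 1$ is any Garside family of~$\MM$, it generates~$\MM$ and so contains every atom; for each~$\II$, the first factor~$\hh$ of the $\Sigma'$-normal decomposition of~$\Delta_\II$ is the greatest element of~$\Sigma'$ left-dividing~$\Delta_\II$, so each $\ss \in \II \subseteq \Sigma'$ with $\ss \dive \Delta_\II$ satisfies $\ss \dive \hh$; hence $\Delta_\II \dive \hh \dive \Delta_\II$, forcing $\Delta_\II = \hh \in \Sigma'$ and $\Sigma \subseteq \Sigma'$.

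The one potentially subtle step is the Garside-family verification, where one identifies $\Delta_{\II(\gg)}$ as the $\Sigma$-head and then invokes the standard passage from head existence to normal decomposition in the sense of~\cite{Dif}; once Lemma~\ref{L:Lcm} is in hand, the remaining three assertions reduce to routine length and divisibility comparisons.
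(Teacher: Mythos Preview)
Your argument is correct, and the first two claims and the injectivity claim are handled essentially as in the paper (the paper phrases injectivity via $\Delta_{\II\cup\JJ}$ rather than recovering~$\II$ as the set of atom left-divisors, but the underlying length comparison is the same).

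Where you genuinely diverge is in the Garside-family claim. The paper does not verify the normal-decomposition property directly; instead it invokes the external result that in a left-cancellative monoid with length function and right-lcms the smallest Garside family is the closure of the atom set under right-lcm \emph{and} right-complement, and then shows (via the identity $\ff \under \LCM{\gg_1}{\dots,\gg_\nn} = \LCM{\ff\under\gg_1}{\dots,\ff\under\gg_\nn}$) that the closure under right-lcm alone is already closed under right-complement. Your route is more elementary and self-contained: you exhibit $\Delta_{\II(\gg)}$ as the $\Sigma$-head, observe that the greedy iteration terminates by the length function, and obtain minimality by reading off $\Delta_\II$ as the first factor of its own $\Sigma'$-normal form. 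This avoids the right-complement computation entirely. The paper's approach, on the other hand, makes the connection to the general Garside machinery (closure under~$\under$) explicit, which is useful if one later wants to manipulate the family algebraically. One small point: your claim that a Garside family containing~$1$ ``contains every atom'' deserves the one-line justification that atoms have length~$1$ and $\Sigma'$ generates~$\MM$, so any atom expressed as a $\Sigma'$-product must itself appear as a factor; you use this implicitly.
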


\begin{proof}
By Proposition~\ref{P:Cancel}, $\MM$ admits a length function, it is left-cancellative, and that any two elements of~$\MM$ admit a right-lcm. Hence, by~\cite[Prop.~3.25]{Dif} (or \cite[Prop.~IV.2.46]{Garside}), $\MM$ admits a smallest Garside family~$\Sigma$, namely the closure of the atoms, that is, of~$\SS$, under the right-lcm and right-complement operations. We claim that $\Sigma$ is the closure~$\Sigma'$ of~$\SS$ under the sole right-lcm operation. 

By definition, $\Sigma'$ is included in~$\Sigma$, and the point is to prove that $\Sigma'$ is closed under the right-complement operation. This follows from the formula
\begin{equation}
\label{E:IterLcm}
\ff \under \LCM{\gg_1}{..., \gg_\nn} = \LCM{\ff\under\gg_1}{..., \ff\under\gg_\nn},
\end{equation}
which is valid in every monoid that admits unique right-lcms as shows an easy induction on~$\nn$. So assume that $\gg$ belongs to~$\Sigma'$, that is, $\gg$ is a right-lcm of elements $\tt_1 \wdots \tt_\nn$ of~$\SS$. If $\ff$ lies in~$\SS$, then, for every~$\ii$, the element $\ff\under\tt_\ii$ belongs to~$\SS \cup \{1\}$ since it is either $\ff\op\tt_\ii$, if $\ff$ and $\tt_\ii$ are distinct, or~$1$, if $\ff$ and $\tt_\ii$ coincide. Then \eqref{E:IterLcm} shows that $\ff\under\gg$ belongs to~$\Sigma'$ for every~$\ff$ in~$\SS$. Using induction on the length of~$\ff$, we deduce a similar result for every~$\ff$ in~$\MM$ from the formula $(\ff_1 \ff_2) \under \gg = \ff_2 \under (\ff_1 \under \gg)$. So $\Sigma'$ is closed under~$\under$ and it coincides with~$\Sigma$.

For~$\II$ a finite subset of~$\SS$, write~$\Delta_\II$ for the right-lcm of~$\II$. Lemma~\ref{L:Lcm} shows that $\Delta_\II$ has length~$\card\II$. Now, assume that $\II, \JJ$ are finite subsets of~$\SS$ and $\Delta_\II = \Delta_\JJ$ holds. As every element of~$\II \cup \JJ$ left-divides~$\Delta_\II$, we must have $\Delta_{\II\cup\JJ} = \Delta_\II = \Delta_\JJ$. This implies $\card{(\II \cup \JJ)} = \card\II = \card\JJ$, whence $\II = \II\cup\JJ = \JJ$.  So the map $\II \mapsto \Delta_\II$ is a bijection of~$\Pwfin(\SS)$ to~$\Sigma$.

Finally, if $\ss_1 \wdots \ss_\nn$ are pairwise distinct elements of~$\SS$, then $\SPol_\nn(\ss_1 \wdots \ss_\nn)$ is the right-lcm of $\ss_1 \wdots \ss_\nn$ and, by Lemma~\ref{L:Lcm}, the latter is also the left-lcm of the elements $\sst_1 \wdots \sst_\nn$ defined by $\sst_\ii = \Pol_\nn(\ss_1 \wdots \widehat{\ss_\ii} \wdots \ss_\nn, \ss_\ii)$.
\end{proof}

We turn to Garside monoids~\cite{Dgk}. A pair $(\MM, \Delta)$ is a \emph{Garside monoid} if $\MM$ is a cancellative monoid, it admits a weak length function, every two elements admit left- and right-lcms and gcds, and $\Delta$ is a Garside element in~$\MM$, meaning that the left- and right-divisors of~$\Delta$ coincide, generate~$\MM$, and are finite in number. We often say that a monoid~$\MM$ is a Garside monoid if there exists~$\Delta$ in~$\MM$ such that $(\MM, \Delta)$ satisfies the above conditions. 

\begin{prop}
\label{P:GarMon}
Assume that $(\SS, \op)$ is a bijective RC-quasigroup of cardinal~$\nn$ and $\MM$ is the associated monoid. Then the right-lcm~$\Delta$ of~$\SS$ is a Garside element in~$\MM$, it admits $2^\nn$ (left- or right-) divisors, and $(\MM, \Delta)$ is a Garside monoid.
\end{prop}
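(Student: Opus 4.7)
The plan is as follows. Most of the structure is already in place: by Proposition~\ref{P:Cancel} and Corollary~\ref{C:Ore} the monoid~$\MM$ is cancellative, carries a length function, and admits left- and right-lcms and gcds. So it suffices to verify that $\Delta = \Delta_\SS$ is a Garside element, namely that its sets of left- and right-divisors coincide, generate~$\MM$, and number exactly~$2^\nn$.

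First, I would identify the set of left-divisors of~$\Delta$ with the Garside family $\Sigma = \{\Delta_\II \mid \II \subseteq \SS\}$ supplied by Proposition~\ref{P:GarFam}. Since $\SS$ is finite, $\Pwfin(\SS) = \Pw(\SS)$ has $2^\nn$ elements, so the injectivity of $\II \mapsto \Delta_\II$ gives $\card\Sigma = 2^\nn$. For any $\II \subseteq \SS$, every element of~$\II$ left-divides~$\Delta$, hence so does their right-lcm~$\Delta_\II$; thus $\Sigma$ is contained in the set of left-divisors of~$\Delta$. Conversely, since $\Sigma$ is a Garside family containing~$1$ and $\Delta \in \Sigma$, the standard closure of a Garside family under left-divisors forces every left-divisor of~$\Delta$ to lie in~$\Sigma$. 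As $\SS \subseteq \Sigma$, this common set generates~$\MM$, so once the right-divisor side is handled, $\Delta$ will be a Garside element with exactly~$2^\nn$ divisors and $(\MM, \Delta)$ a Garside monoid.

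The main obstacle, and the remaining step, is to show that the set of right-divisors of~$\Delta$ also equals~$\Sigma$: reasoning only with~$\op$ produces a ``right-Garside'' family but does not force left- and right-divisors to coincide. The clean route is to invoke the symmetric content of Proposition~\ref{P:GarFam}, which states that every $\Delta_\II$ is \emph{simultaneously} a right-lcm of~$\II$ and a left-lcm of some cardinal-$\card\II$ subset of~$\SS$. Applying Proposition~\ref{P:Cancel} and Proposition~\ref{P:GarFam} to the opposed monoid~$\MM\opp$ equipped with the RC-quasigroup~$(\SS, \opL\opp)$ yields, dually, the smallest right-Garside family of~$\MM$ containing~$1$, namely the set of left-lcms in~$\MM$ of subsets of~$\SS$; the double description above identifies this set with~$\Sigma$ itself. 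Hence the set of right-divisors of~$\Delta$ equals~$\Sigma$, matching the left-divisor side and finishing the proof without any further appeal to the RC-calculus of Section~\ref{S:RCCalculus}.
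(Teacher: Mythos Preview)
Your overall strategy is sound and, once completed, is actually more self-contained than the paper's argument, which disposes of the matter by checking that $\Sigma$ is a finite, right-bounded, right-complement--closed generating family in a right-cancellative monoid and then invoking \cite[Prop.~2.1]{Dgk} as a black box. Your route via the dual structure $(\SS,\opL\opp)$ and the identification $\Sigma=\tilde\Sigma$ is the natural way to avoid that citation.

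There is, however, a genuine gap. The sentence ``the standard closure of a Garside family under left-divisors forces every left-divisor of~$\Delta$ to lie in~$\Sigma$'' is not a standard fact: Garside families in the sense used here are closed under right-complement (hence under right-divisors once $1\in\Sigma$), but \emph{not} a priori under left-divisors. You invoke this unjustified closure twice---once directly for left-divisors of~$\Delta$, and once dually in the last paragraph for right-divisors---so the argument as written is circular at that point: knowing $\Sigma=\tilde\Sigma$ does not by itself show that either divisor set is contained in~$\Sigma$.

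The fix is already implicit in your last paragraph. From $\Sigma=\tilde\Sigma$ you get that $\Sigma$ is closed under \emph{both} right-complement and left-complement. Now prove, by induction on~$\LGG\SS\gg$, that every left-divisor of an element $\gg\in\Sigma$ lies in~$\Sigma$: write $\gg=ab$; if $b\neq1$, choose an atom~$\ss$ right-dividing~$b$; then $\gg/\ss\in\Sigma$ by left-complement closure (since $\ss,\gg\in\Sigma$), it has smaller length, and $a\dive\gg/\ss$, so the induction applies. The dual argument (using right-complement closure) handles right-divisors. With this step inserted, your proof is complete and gives the same conclusion as the paper without appealing to~\cite{Dgk}.
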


\begin{proof}
As above, write $\Delta_\II$ for the right-lcm of~$\II$ for~$\II \subseteq \SS$, and write $\Delta$ for~$\Delta_\SS$. By Proposition~\ref{P:GarFam}, the family~$\Sigma$ of all elements~$\Delta_\II$ is the smallest Garside family containing~$1$ in~$\MM$, and it has $2^\nn$~elements. By definition, $\Delta_\II$ left-divides~$\Delta_\SS$, that is, every element of~$\Sigma$ left-divides~$\Delta$, and, moreover, $\Delta$ lies in~$\Sigma$. This means that the Garside family~$\Sigma$ is what is called right-bounded by~$\Delta$ \cite[Def.~VI.1.1]{Garside}, and $\Delta$ is a right-Garside element in~$\MM$. Moreover, by Corollary~\ref{C:Ore}, $\MM$ is also right-cancellative, and $\Sigma$ is a finite subset of~$\MM$ that generates~$\MM$ and is closed under the right-complement operation. By~\cite[Prop.~2.1]{Dgk}, $\Delta$ is a Garside element in~$\MM$, and $(\MM, \Delta)$ is a Garside monoid. 
\end{proof}

\begin{exam}
\label{X:Lattice}
\rightskip 35 mm
Let $\SS = \{\tta, \ttb, \ttc\}$, and let $\op$ be determined by~$\xx\op\yy = \ff(\yy)$ where $\ff$ is the cycle $\aa \mapsto \bb \mapsto \cc \mapsto \aa$. Then, as seen in Example~\ref{X:RC}, $(\SS, \op)$ is a bijective RC-quasigroup, and it is eligible for the above results. The associated monoid admits the presentation 
$$\PRESp{\tta, \ttb, \ttc}{\tta\ttc = \ttb^2, \tta^2 = \ttc\ttb, \ttb\tta = \ttc^2}.\HS{24}$$
The right-lcm~$\Delta$ of~$\SS$ is then~$\tta^3$, which is also $\ttb^3$ and~$\ttc^3$, and the lattice of the $8$~divisors of~$\Delta$ is shown on the right. 
\hfill
\begin{picture}(0,0)(-10,-3)
\rput(10,0){\rnode[c]{1}{$1$}}
\rput(0,10){\rnode[c]{a}{$\tta$}}
\rput(10,10){\rnode[c]{b}{$\ttb$}}
\rput(20,10){\rnode[c]{c}{$\ttc$}}
\rput(0,20){\rnode[c]{bb}{$\ttb^2$}}
\rput(10,20){\rnode[c]{aa}{$\tta^2$}}
\rput(20,20){\rnode[c]{cc}{$\ttc^2$}}
\rput(10,30){\rnode[c]{aaa}{$\Delta$}}
\psset{nodesep=0.5mm}
\AArrow(a,aa)
\AArrow(1,a)
\BArrow(1,b)
\CArrow(1,c)
\CArrow(a,bb)
\BArrow(b,bb)
\BArrow(c,aa)
\AArrow(b,cc)
\CArrow(c,cc)
\AArrow(aa,aaa)
\BArrow(bb,aaa)
\CArrow(cc,aaa)
\end{picture}
\end{exam}

We conclude the section with a result in the other direction. If $(\SS, \sol)$ is a solution of the YBE or, equivalently, if $(\SS, \op)$ is a finite bijective RC-quasigroup, then the associated monoid is a Garside monoid and, moreover, its definition implies that the latter admits a presentation of a certain form. We see now is that, conversely, every Garside monoid with the above properties is associated with a (bijective) RC-quasigroup. Once again, a YBE version appears in~\cite{Cho} and the point here is to show that using the RC-law provides alternative and hopefully more simple arguments. 

\begin{prop}
\label{P:Charac}
Assume that $\MM$ is a monoid with atom set~$\SS$ of cardinal~$\nn$. Then the following are equivalent:

\ITEM1 There exists an operation~$\op$ such that $(\SS, \op)$ is a bijective RC-quasigroup and $\MM$ is associated with~$(\SS, \op)$---or, equivalently, there exists $\sol$ such that $(\SS, \sol)$ is a solution of YBE and $\MM$ is associated with~$(\SS, \sol)$;

\ITEM2 The monoid~$\MM$ is a Garside monoid admitting a presentation in terms of~$\SS$ that consists of one relation $\ss \theta(\ss, \tt) = \tt \theta(\tt, \ss)$ for $\ss \not= \tt$ in~$\SS$ with $\theta : \SS \times \SS \to \SS$ such that $\tt \mapsto \theta(\ss, \tt)$ is injective for every~$\ss$.

\ITEM3 The monoid~$\MM$ is a Garside monoid admitting a presentation in terms of~$\SS$ consisting of $\nn\choose2$ relations~$\uu = \vv$ with $\uu, \vv$ of length two such that every length two $\SS$-word appears in at most one relation.
\end{prop}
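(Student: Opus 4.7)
The implication \ITEM1$\Rightarrow$\ITEM2 essentially compiles previously established results. Taking~$\theta(\ss, \tt) := \ss \op \tt$ puts the canonical presentation~\eqref{E:Str} in the required form; Propositions~\ref{P:Cancel} and~\ref{P:GarMon} give that $\SS$ is the atom set of~$\MM$ and that~$\MM$ is a Garside monoid; and the injectivity of $\tt \mapsto \theta(\ss, \tt)$ on~$\SS \setminus \{\ss\}$ is the injectivity of the left-translation by~$\ss$ in the RC-quasigroup. For \ITEM2$\Rightarrow$\ITEM3, there are $\binom{\nn}{2}$ relations (one per pair~$\{\ss,\tt\}$), both sides have length~$2$, and left-cancellation forces each relation to have distinct first letters on its two sides. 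If a length-two word~$\ss\xx$ appeared in two different relations, a case check (left versus right side, in each relation) combined with the injectivity of $\tt \mapsto \theta(\ss, \tt)$ forces the two relations to coincide.

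The substantive direction is \ITEM3$\Rightarrow$\ITEM1. First I would show that every pair $\{\ss, \tt\}$ with $\ss \neq \tt$ corresponds to exactly one relation of the form $\ss\xx = \tt\yy$. Every relation has this shape for some pair, otherwise cancellation would make it trivial. Conversely, two such relations $\ss\xx_1 = \tt\yy_1$ and $\ss\xx_2 = \tt\yy_2$ would produce two length-two common right-multiples of~$\ss$ and~$\tt$; both must equal the right-lcm (which exists by the Garside property and, since $\ss,\tt$ are distinct atoms admitting a length-two common right-multiple, must itself have length two), and cancellation yields $\xx_1 = \xx_2$ and $\yy_1 = \yy_2$. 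With $\binom{\nn}{2}$ relations and $\binom{\nn}{2}$ pairs, each pair has exactly one relation. I would then define $\ss \op \tt := \xx$ and $\tt \op \ss := \yy$ from the relation $\ss\xx = \tt\yy$; the hypothesis that each length-two word appears in at most one relation gives injectivity of $\tt \mapsto \ss \op \tt$ on $\SS \setminus \{\ss\}$, and I extend by setting $\ss \op \ss$ equal to the unique missing value, so that each left-translation is a bijection of~$\SS$.

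It remains to verify the RC-law and bijectivity. The RC-law~\eqref{E:RC} for~$\op$ is exactly the cube condition~\eqref{E:Cube} applied to atoms, and holds because $\MM$ admits (unique) right-lcms; this is the dictionary between~$\op$ and the right-complement noted in Example~\ref{X:RC}. The presentation of~$\MM$ then matches~\eqref{E:Str} by construction, so $\MM$ is indeed the structure monoid of~$(\SS, \op)$. For bijectivity of~$(\SS,\op)$, I would run the same argument on the opposite monoid~$\MM\opp$---itself Garside with the same atoms and satisfying the analogous hypothesis---to obtain an auxiliary operation~$\opL$ such that $(\xx,\yy) \mapsto (\xx\opL\yy, \yy\opL\xx)$ inverts~$\Double$ on pairs with distinct entries. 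A counting argument then completes the picture: for each $\xx \in \SS$, exactly $\nn-1$ pairs $(\ss,\tt)$ with $\ss \neq \tt$ satisfy $\ss\op\tt = \xx$; all have distinct first coordinates by injectivity of the left-translations, so a unique~$\ss$ is left out, and this~$\ss$ must satisfy $\ss\op\ss = \xx$. Hence $\ss \mapsto \ss\op\ss$ is a bijection, and $\Double$ is bijective on~$\SS\times\SS$.

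The main obstacle is the bookkeeping in \ITEM3$\Rightarrow$\ITEM1: one must extract a well-defined binary operation~$\op$ from a presentation given only in combinatorial terms, and then confirm that the RC-quasigroup axioms emerge automatically from the Garside/lcm structure rather than being separate hypotheses. The key observation is that the length-preserving, right-complemented character of the presentation (supplied by the ``at most one relation per word'' condition together with existence of right-lcms) translates the cube condition directly into the RC-law, after which the symmetric treatment of~$\MM\opp$ yields bijectivity with no further work.
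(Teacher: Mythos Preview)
Your overall architecture matches the paper's: \ITEM1$\Rightarrow$\ITEM2$\Rightarrow$\ITEM3 are routine, and for the converse you correctly extract from the relations an operation~$\op$ that agrees with the right-complement~$\under$ off the diagonal and is completed on the diagonal by the unique missing value. Your counting argument for the bijectivity of~$\Double$ (via the inverse supplied by~$\MM\opp$ on off-diagonal pairs, then pigeonhole on first coordinates) is in fact more self-contained than the paper's route, which establishes the RC-law first and then invokes Rump's theorem that every finite RC-quasigroup is bijective.

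There is, however, a genuine gap in your verification of the RC-law. You assert that ``the RC-law for~$\op$ is exactly the cube condition applied to atoms,'' but this identifies~$\op$ with~$\under$, and the two agree only off the diagonal: $\ss\under\ss = 1 \notin \SS$, whereas $\ss\op\ss \in \SS$ by construction. For pairwise distinct $\rr, \ss, \tt$ your argument is fine, since all four intermediate values $\rr\op\ss$, $\rr\op\tt$, $\ss\op\rr$, $\ss\op\tt$ are distinct atoms and one may replace~$\op$ by~$\under$ throughout. The case $\rr = \ss$ is tautological. But when $\rr \neq \ss$ and $\tt \in \{\rr, \ss\}$ the RC-law reads, say for $\tt = \ss$,
\[
(\rr\op\ss)\op(\rr\op\ss) = (\ss\op\rr)\op(\ss\op\ss),
\]
and the left-hand side is a \emph{diagonal} value of~$\op$, about which the cube condition for~$\under$ says nothing. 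Your bijectivity argument does not close this either: it shows $\Double$ is bijective and that the map from~$\MM\opp$ inverts~$\Double$ on off-diagonal pairs, but not that the~$\opL$ built from~$\MM\opp$ satisfies the diagonal involutivity $(\ss\op\ss)\opL(\ss\op\ss) = \ss$, which is what one needs.

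The paper handles exactly this point in its proof of \ITEM2$\Rightarrow$\ITEM1. It first proves the diagonal involutivity $(\ss\op\ss)\opL(\ss\op\ss) = \ss$ by arguing that no relation in~$\RR$ has the form $\ss\ss' = \cdots$ with $\ss' = \ss\op\ss$, so~$\ss$ is never of the form $(\ss\under\tt)/\ss'$ for $\tt \neq \ss$, forcing $\ss'\opL\ss' = \ss$. It then pins down the two diagonal RC-instances by a pigeonhole argument: for $\zz \neq \rr, \ss$ the element $(\rr\under\ss)\under(\rr\under\zz)$ is shown to differ from all four expressions in~\eqref{E:GarRC}, leaving only two candidate values; injectivity of left-translations and of $\xx \mapsto \xx\op\xx$ (the latter from the just-proved involutivity) then force the correct matching. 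Your proposal needs an argument of this kind before the RC-law can be claimed.
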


It follows from Definition~\ref{D:StrYB} and Proposition~\ref{P:GarMon} that \ITEM1 implies~\ITEM2; on the other hand, a presentation satisfying the conditions of~\ITEM2 necessarily satisfies those of~\ITEM3 since a length two $\SS$-word~$\ss\ss'$ may appear in a relation $\ss... = \tt...$ only if $\ss' = \theta(\ss, \tt)$ holds, which happens for at most one~$\tt$. So we are left with proving that \ITEM3 implies~\ITEM2 and that \ITEM2 implies~\ITEM1.

\begin{proof}[Proof of \ITEM3$\Rightarrow$\ITEM2 in Proposition~\ref{P:Charac}]
Let $\RR$ be the considered list of relations. Assume that $\ss, \tt$ are distinct elements of~$\SS$ and $\RR$ contains at least two relations $\ss... = \tt...$, say $\ss\tt' = \tt\ss'$ and $\ss\tt'' = \tt\ss''$ with $(\ss', \tt') \not= (\ss'', \tt'')$. As $\MM$ is cancellative, we have $\ss\tt' \not= \ss\tt''$, so $\ss\tt'$ and $\ss\tt''$ are two common right-multiples of~$\ss$ and~$\tt$ of length~$2$: this contradicts the existence of a right-lcm for~$\ss$ and~$\tt$, as the latter can have neither length~$1$ nor length~$2$. Hence $\RR$ contains at most one relation $\ss... = \tt...$ for $\ss \not= \tt$. On the other hand, $\RR$ contains no relation $\ss... = \ss...$ since $\MM$ is left-cancellative and $\ss\tt = \ss\tt'$ would imply $\tt = \tt'$. As there are $\nn\choose2$ pairs of distinct elements of~$\SS$, we deduce that $\RR$ contains exactly one relation $\ss... = \tt...$ for~$\ss \not= \tt$ in~$\SS$. We can then write the latter relation as $\ss \theta(\ss, \tt) = \tt \theta(\tt, \ss)$ for some map~$\theta$ from~$\SS^2$ to~$\SS$. The assumption that no word appears in two relations then implies that every map $\tt \mapsto \theta(\ss, \tt)$ is injective.
\end{proof}

We recall that, if $\MM$ is a cancellative monoid in which any two elements admit a unique right-lcm, then, for $\ff, \gg$ in~$\MM$, the right-complement of~$\ff$ in~$\gg$ is the element~$\ff \under\gg$ such that $\ff(\ff \under\gg)$ is the right-lcm of~$\ff$ and~$\gg$. Under symmetric assumptions, the \emph{left-commplement} $\ff / \gg$ is the element such that $(\ff / \gg)\gg$ is the left-lcm of~$\ff$ and~$\gg$. The operation~$\under$ obeys the RC-law, whereas $/$ obeys the LC-law.  

\begin{proof}[Proof of \ITEM2$\Rightarrow$\ITEM1 in Proposition~\ref{P:Charac}]
By assumption, $\ss\theta(\ss, \tt)$ is the right-lcm of~$\ss$ and~$\tt$ (which exists since $\MM$ is a Garside monoid) for $\ss \not= \tt$ in~$\SS$, so $\ss \under \tt = \theta(\ss, \tt)$ holds. We define a binary operation~$\op$ on~$\SS$. First, we put $\ss\op\tt = \ss\under\tt$ for $\ss \not= \tt$. As $\tt \not= \tt'$ implies $\theta(\ss, \tt) \not= \theta(\ss, \tt')$, the map $\xx \mapsto \ss\op\xx$ is injective on~$\SS \setminus\{\ss\}$, so the complement of $\{\ss\op\xx \mid \xx\not=\ss\}$ in~$\SS$ consists of a unique element, which we define to be $\ss\op\ss$. We thus obtain an operation~$\op$ whose left-translations are one-to-one. 

Next, we know that \ITEM2 implies \ITEM3. Now, in \ITEM3, the left and the right sides play symmetric roles. Hence the symmetric counterpart of~\ITEM2 is true and, by (the counterpart of) the above argument, we obtain a binary operation~$\opL$ on~$\SS$ connected with the left-complement operation~$/$ and such that the right-translations~$\opL$ are one-to-one. We claim that $\op$ and $\opL$ satisfy~\eqref{E:InvolRLC}. First, assume $\ss \not= \tt$. Then $\ss(\ss\op\tt) = \tt(\tt\op\ss)$ lies in~$\RR$ (the considered set of relations), implying $\ss\op\tt \not= \tt\op\ss$. Next, $\ss(\ss\op\tt)$ is the right-lcm of~$\ss$ and~$\tt$, and, as $\ss\op\tt$ and $\tt\op\ss$ are distinct, $\ss(\ss\op\tt)$ is the left-lcm of~$\ss\op\tt$ and~$\tt\op\ss$, which implies $(\ss\op\tt)\opL(\tt\op\ss) = \tt$. Now, put $\ss' = \ss\op\ss$ and $\rr = \ss'\opL\ss'$. For $\tt \not= \ss$, we have $\ss\op\tt \not= \ss'$, whence $\rr = \ss'/(\ss\under\tt)$. Then $\rr (\ss\under\tt) = ((\ss\under\tt)/\ss')\ss'$ is a relation of~$\RR$, which implies $(\ss\under\tt)/\ss' \not= \ss$ since $\RR$ contains no relation $\ss\ss' = ...$. Since $(\ss\under\tt)/\ss' \not= \ss$ holds for every~$\tt$ distinct of~$\ss$, we deduce $\ss'\opL\ss' = \ss$ since $\ss'\opL\ss'$ is the only element of~$\SS$ that is not of the form $(\ss\under\tt)\opL\ss'$ with~$\tt\not=\ss$. Hence $(\ss\op\ss)\opL(\ss\op\ss) = \ss$ holds, and the first involutivity law is satisfied in~$(\SS, \op, \opL)$. By a symmetric argument, the second involutivity law holds as well.

Finally, we claim that $(\SS, \op)$ satisfies the RC-law. Let $\rr, \ss, \tt$ lie in~$\SS$. Assume first that $\rr, \ss, \tt$ are pairwise distinct. Then we have $\rr\op\ss \not= \rr\op\tt$ and $\ss\op\rr \not= \ss\op\tt$, whence $(\rr\op\ss)\op(\rr\op\tt) = (\rr\under\ss)\under(\rr\under\tt) = (\ss\under\rr)\under(\ss\under\tt) = (\ss\op\rr)\op(\ss\op\tt)$, since the right-complement operation~$\under$ satisfies the RC-law. Next, for $\rr = \ss$, the RC-law tautologically holds for $\rr, \ss, \tt$. So there only remain the cases when $\rr \not= \ss$ and $\tt$ is either~$\rr$ or~$\ss$, that is, we have to establish $(\rr\op\ss)\op(\rr\op\ss) = (\ss\op\rr)\op(\ss\op\ss)$ and $(\ss\op\rr)\op(\ss\op\rr) = (\rr\op\ss)\op(\rr\op\rr)$, that is, owing to $\rr\not=\ss$, 
\begin{equation}
\label{E:GarRC}
(\rr\under\ss)\op(\rr\under\ss) = (\ss\under\rr)\under(\ss\op\ss) \text{ and } (\ss\under\rr)\op(\ss\under\rr) = (\rr\under\ss)\under(\rr\op\rr).
\end{equation}
Assume $\zz \not= \rr, \ss$ and put $\zz' = (\rr\under\ss)\under(\rr\under\zz)$, which is also $\zz' = (\ss\under\rr)\under(\ss\under\zz)$ since $\under$ satisfies the RC-law. Then we have $\rr\under\zz \not= \rr\under\ss$, whence $\zz' \not= (\rr\under\ss)\op(\rr\under\ss)$. Also, we have $\ss\under\zz \not= \ss\op\ss$, whence $\zz' \not= (\ss\under\rr)\under(\ss\op\ss)$. Arguing similarly with~$\rr$ and~$\ss$ exchanged, we find $\zz' \not= (\ss\under\rr)\op(\ss\under\rr)$ and $\zz' \not= (\rr\under\ss)\under(\rr\op\rr)$. So, $\zz'$ is distinct from the four expressions occurring in~\eqref{E:GarRC} and, therefore, the only possible values for the latter are the two elements of~$\SS$ that are not of the form $(\rr\under\ss)\under(\rr\under\zz)$ with $\zz \not= \rr, \ss$. As left-translations of~$\op$ are injective, we must have $(\rr\under\ss)\op(\rr\under\ss) \not= (\rr\under\ss)\under(\rr\op\rr)$ and $(\ss\under\rr)\under(\ss\op\ss) \not= (\ss\under\rr)\op(\ss\under\rr)$. So, in order to establish \eqref{E:GarRC}, it suffices to show that $(\rr\under\ss)\op(\rr\under\ss) = (\ss\under\rr)\op(\ss\under\rr)$ is impossible. Now $\rr\not= \ss$ implies $\rr\op\ss \not= \ss\op\ss$, so it is enough to prove that $\xx \not= \yy$ implies $\xx\op\xx \not= \yy\op\yy$: this follows from the involutivity relation $(\xx\op\xx) \opL (\xx\op\xx) = \xx$ established above.

So $(\SS, \op)$ is an RC-quasigroup. By~\cite[Theorem~2]{RumYB}, it is necessarily bijective since $\SS$ is finite---alternatively, one can check that $(\SS, \op, \opL)$ is an RLC-quasigroup and $(\SS, \op)$ is bijective by Lemma~\ref{L:SecondOp}. By construction, $\MM$ admits the presentation $\Pres\SS\RR$, so it is (isomorphic to) the monoid associated with~$(\SS, \op)$.
\end{proof}

\begin{rema}
The injectivity condition for left-translations associated with~$\theta$ is necessary in Proposition~\ref{P:Charac}\ITEM2: the dual braid monoid $\PRESp{\tta, \ttb, \ttc}{\tta\ttb = \ttb\ttc = \ttc\tta}$ is a Garside monoid that admits a presentation of the considered type but it is not associated with an RC-quasigroup since (for instance) the right-lcm of the atoms has $5$~divisors. Now, in this case, we have $\theta(\tta, \ttb) = \theta(\tta, \ttc) = \ttb$.
\end{rema}

\section{The $I$-structure}
\label{S:IStructure}

It has been known since~\cite{Gav} and~\cite{JeO} that the monoids associated with solutions of YBE admit a nice geometric characterization as those monoids that admit an $I$-structure, meaning that their Cayley graph is a twisted copy of that of a free abelian monoid. These results can be easily established using the framework of RC-quasigroups and the computational formulas of Section~\ref{S:RCCalculus}. In particular, we shall see that the $I$-structure can be explicitly determined using the polynomials~$\SPol_\nn$. 

In view of the final results in Section~\ref{S:Coxeter}, it is convenient to start with a slightly extended definition in which the reference monoid need not be free Abelian.

\begin{defi}
\label{D:IStr}
Assume that $\MMb$ and $\MM$ are monoids generated by a set~$\SS$. A \emph{right-$I$-structure of shape~$(\MMb, \SS)$} for~$\MM$ is a bijective map~$\nu: \MMb \to \MM$ satisfying $\nu(1) = 1$, $\nu(\ss) = \ss$ for~$\ss$ in~$\SS$ and, for every~$\gga$ in~$\MMb$,
\begin{equation}
\label{E:IStr}
\{\nu(\gga\ss) \mid \ss \in \SS\} = \{\nu(\gga)\ss \mid \ss \in \SS\}.
\end{equation}
\end{defi}

When the reference monoid~$\MMb$ is the $\SS$-power of a monoid, we skip~$\SS$ and just say ``right-$I$-structure of shape~$\MMb$''; if $\MMb$ is a free Abelian monoid, we also skip~$\MMb$ and---as is usual---say ``right-$I$-structure''. A monoid is said to be \emph{of right-$I$-type}~\cite{Gav, JeO} if it admits a right $I$-structure.

The existence of a right-$I$-structure of shape~$(\MMb, \SS)$ on a monoid~$\MM$ provides a bijection from the Cayley graph of~$\MMb$ relative to~$\SS$ onto that of~$\MM$ that preserves the path length but changes labels. Note that \eqref{E:IStr} is equivalent to the existence, for every~$\gga$ in~$\MMb$, of a permutation~$\psi(\gga)$ of~$\SS$ satisfying, for every~$\ss$ in~$\SS$, 
\begin{equation}
\label{E:IStrBis}
\nu(\gga\ss) = \nu(\gga) \cdot \psi(\gga)(\ss).
\end{equation}
We shall use~$\psi$ with this meaning everywhere in the sequel.

If $\MMb$ is a free Abelian monoid based on~$\SS$ and, more generally, if every permutation of~$\SS$ induces an automorphism of~$\MMb$, then the condition $\nu(\ss) = \ss$ for~$\ss$ in~$\SS$, which amounts to $\psi(1)$ being the identity of~$\SS$, can be ensured by precomposing~$\nu$ with the automorphism induced by~$\psi(1)\inv$ and, therefore, it could be removed from the definition without changing the range of the latter.

Hereafter we denote by~$\NNNN^{(\SS)}$ the free Abelian monoid based on a (finite or infinite) set~$\SS$, identified with the set of all finite support $\SS$-indexed sequences of elements of~$\NNNN$ (we use $\NNNN^\SS$ when $\SS$ is finite); an element~$\ss$ of~$\SS$ is identified with the sequence whose only non-zero entry is the $\ss$-entry, which is~$1$.

We first establish the following explicit ``RC-version'' of the result of~\cite{Gav, JeO}:

\begin{prop}
\label{P:RCItype}
Assume that $(\SS, \op)$ is an RC-quasigroup and $\MM$ is the associated monoid. Then the map~$\nu$ defined from~$\op$ by $\nu(\ss_1 \pdots \ss_\nn) = \SPol_\nn(\ss_1 \wdots \ss_\nn)$ is a right $I$-structure on~$\MM$. 
\end{prop}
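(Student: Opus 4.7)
The plan is to verify the four conditions of Definition~\ref{D:IStr}: well-definedness of $\nu$ on the free abelian monoid~$\NNNN^{(\SS)}$, the base values $\nu(1)=1$ and $\nu(\ss)=\ss$, the $I$-condition~\eqref{E:IStr}, and bijectivity. The first three should follow almost immediately from the RC-calculus of Section~\ref{S:RCCalculus}. Well-definedness amounts to invariance of $\SPol_\nn(\ss_1 \wdots \ss_\nn)$ under permutation of its arguments, which is exactly Lemma~\ref{L:SPol}; its hypothesis $\SPol_2(\ss,\tt)=\SPol_2(\tt,\ss)$ is built into the defining relations of~$\MM$, and the base values are direct. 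For the $I$-condition, the recursion $\SPol_{\nn+1}(\ss_1 \wdots \ss_\nn, \ss) = \SPol_\nn(\ss_1 \wdots \ss_\nn) \cdot \Pol_{\nn+1}(\ss_1 \wdots \ss_\nn, \ss)$ from Definition~\ref{D:SPol} suggests setting $\double(\gga)(\ss) := \Pol_{\nn+1}(\ss_1 \wdots \ss_\nn, \ss)$ for $\gga = \ss_1 \pdots \ss_\nn$; this is independent of the chosen ordering of the~$\ss_\ii$ by Lemma~\ref{L:Pol}, and a bijection of~$\SS$ by Lemma~\ref{L:Distinct}\ITEM1, yielding~\eqref{E:IStr}.

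Surjectivity of $\nu$ will follow by induction on the length in~$\MM$, which is available from the length function of Proposition~\ref{P:Cancel}: an element $\gg \in \MM$ of length $\nn \ge 1$ factors as $\gg = \gg' \ss$, the induction hypothesis yields $\gga' \in \NNNN^{(\SS)}$ with $\nu(\gga') = \gg'$, and the surjectivity of $\double(\gga')$ on~$\SS$ furnishes $\ss' \in \SS$ with $\nu(\gga' \ss') = \gg' \ss = \gg$.

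For injectivity my plan rests on first establishing an auxiliary \emph{support lemma}: for every $\cc \in \NNNN^{(\SS)}$ and every $\rr \in \SS$, the element $\rr$ left-divides $\nu(\cc)$ in~$\MM$ if and only if $\rr$ belongs to the support of~$\cc$. The key tool is the factorization
\begin{equation}
\label{E:PlanFactor}
\nu(\ss\cc_0) = \ss \cdot \nu(\ss\op\cc_0),
\end{equation}
where $\ss\op\cc_0$ denotes the element of $\NNNN^{(\SS)}$ obtained by replacing each entry~$\tt$ of the multiset~$\cc_0$ with $\ss\op\tt$; this follows from the easy inductive identity $\Pol_\kk(\ss, \ss_2 \wdots \ss_\kk) = \Pol_{\kk-1}(\ss\op\ss_2 \wdots \ss\op\ss_\kk)$. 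The ``if'' direction of the support lemma is immediate from~\eqref{E:PlanFactor}. The ``only if'' direction proceeds by induction on the degree of~$\cc$: writing $\cc = \ss\cc_0$ with $\ss$ in the support and supposing $\rr \ne \ss$ also left-divides $\nu(\cc)$, the right-lcm $\ss(\ss\op\rr)$ left-divides $\nu(\cc) = \ss \cdot \nu(\ss\op\cc_0)$ by Proposition~\ref{P:Cancel}; left-cancelling $\ss$ places $\ss\op\rr$ as a left-divisor of $\nu(\ss\op\cc_0)$, the induction hypothesis then puts $\ss\op\rr$ into the support of $\ss\op\cc_0$, which is $\ss\op\,\mathrm{supp}(\cc_0)$, and the injectivity of the left-translation by~$\ss$---the RC-quasigroup axiom---yields $\rr \in \mathrm{supp}(\cc_0) \subseteq \mathrm{supp}(\cc)$.

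With the support lemma in hand, injectivity of $\nu$ will follow by a second induction on degree. If $\nu(\gga) = \nu(\bb) = \gg$, the support lemma identifies both $\mathrm{supp}(\gga)$ and $\mathrm{supp}(\bb)$ with the set of elements of~$\SS$ that left-divide~$\gg$, so they coincide; picking any common element~$\ss$, writing $\gga = \ss\gga_0$, $\bb = \ss\bb_0$, applying~\eqref{E:PlanFactor} and left-cancelling~$\ss$ reduces the equality to $\nu(\ss\op\gga_0) = \nu(\ss\op\bb_0)$ at degree~$\nn-1$; the induction hypothesis yields $\ss\op\gga_0 = \ss\op\bb_0$ in~$\NNNN^{(\SS)}$, and injectivity of $\ss\op\cdot$ then gives $\gga_0 = \bb_0$, hence $\gga = \bb$. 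The main obstacle in this plan is the support lemma, whose ``only if'' direction subtly combines the recursive structure of $\nu$ with the right-lcm structure of~$\MM$ and relies critically on the injectivity of left-translations in~$(\SS, \op)$.
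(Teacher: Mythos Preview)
Your argument is correct. Well-definedness, the base values, the $I$-condition, and surjectivity are handled essentially as in the paper (your surjectivity induction is a minor rephrasing of the paper's use of Lemma~\ref{L:Distinct}\ITEM2). The genuine difference is in injectivity. The paper argues combinatorially: if $\nu(\gga)=\nu(\gga')$, the $\SS$-words $\SPol_\nn(\rr_1,\dots,\rr_\nn)$ and $\SPol_\nn(\rr'_1,\dots,\rr'_\nn)$ are connected by a chain of defining relations of~$\MM$, and one checks directly that a single relation $\ss(\ss\op\tt)=\tt(\tt\op\ss)$ applied at position~$\ii$ forces the preimage sequences $(\rr_\kk)$ and $(\rr'_\kk)$ to differ by the transposition of two adjacent entries. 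This keeps the proof inside the RC-calculus of Section~\ref{S:RCCalculus} and independent of Section~\ref{S:StrMonoid}, in line with the paper's stated organisation. Your route is more structural: the support lemma characterises the atoms left-dividing~$\nu(\cc)$ via the factorisation $\nu(\ss\cc_0)=\ss\cdot\nu(\ss\op\cc_0)$, and then injectivity follows by peeling off a common support element and inducting. The price is that you import left-cancellativity and the existence of right-lcms from Proposition~\ref{P:Cancel}, so your proof of the $I$-structure is no longer independent of the Garside analysis; the gain is a cleaner, coordinate-free argument that avoids tracking word rewriting and makes the role of the RC-quasigroup axiom (injective left-translations) very transparent.
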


\begin{proof}
We first define a map~$\nu^*$ from the free monoid~$\SS^*$ based on~$\SS$ to~$\MM$ by $\nu^*(\ew) = 1$ and $\nu^*(\ss_1 \sep \pdots \sep \ss_\nn) = \SPol_\nn(\ss_1 \wdots \ss_\nn)$ for $\nn \ge 1$, where $\ss_1 \sep \pdots \sep \ss_\nn$ is the length~$\nn$ word with letters~$\ss_1 \wdots \ss_\nn$. By Lemma~\ref{L:SPol}, $\nu^*(\ss_1 \sep \pdots \sep \ss_\nn)$ does not depend on the order of the entries, so $\nu^*$ induces a well-defined map~$\nu$ from the free Abelian monoid~$\NNNN^{(\SS)}$ to~$\MM$. We claim that the latter provides the expected right-$I$-structure on~$\MM$. First, the equalities $\nu(1) = 1$ and $\nu(\ss) = \ss$ for~$\ss$ in~$\SS$ are obvious. Next, let $\gga$ belong to~$\NNNN^{(\SS)}$, say $\gga = \ss_1 \pdots \ss_\nn$. Then the definition of~$\nu$ gives $\nu(\gga\ss) = \SPol_{\nn+1}(\ss_1 \wdots \ss_\nn, \ss)$,  whence $\nu(\gga\ss) = \SPol_\nn(\ss_1 \wdots \ss_\nn) \cdot \Pol_{\nn+1}(\ss_1 \wdots \ss_\nn, \ss)$. By  Lemma~\ref{L:Distinct}\ITEM1, the map $\ss \mapsto \nu(\gga\ss)$ is a bijection of~$\SS$ into itself, hence \eqref{E:IStr} holds. 

It remains to show that $\nu$ is a bijection from~$\NNNN^{(\SS)}$ to~$\MM$. Let $\gg$ belong to~$\MM$, say $\gg = \ss_1 \pdots \ss_\nn$ with $\ss_1 \wdots \ss_\nn$ in~$\SS$. By Lemma~\ref{L:Distinct}\ITEM2, there exist $\rr_1 \wdots \rr_\nn$ in~$\SS$ satisfying $\Pol_\ii(\rr_1 \wdots \rr_\ii) = (\ss_1 \wdots \ss_\ii)$ for $1 \le \ii \le \nn$, whence $\SPol_\nn(\rr_1 \wdots \rr_\nn) = \ss_1 \pdots \ss_\nn = \gg$. By definition, this means that $\nu(\rr_1 \pdots \rr_\nn) = \gg$ holds, and $\nu$ is surjective.

Finally, assume that $\gga, \gga'$ belong to~$\NNNN^{(\SS)}$ and $\nu(\gga) = \nu(\gga')$ holds. As the elements of~$\MM$ have a well-defined length, the length of~$\gga$ and~$\gga'$ must be the same. Write $\gga = \rr_1 \pdots \rr_\nn$, $\gga' = \rr'_1 \pdots \rr'_\nn$ with $\rr_1 \wdots \rr'_\nn$ in~$\SS$. Define $\ss_\ii = \Pol_\ii(\rr_1 \wdots \rr_\ii)$ and $\ss'_\ii = \Pol_\ii(\rr'_1 \wdots \rr'_\ii)$. By definition, $\nu(\gga)$ is the class of $\ss_1 \sep \pdots \sep \ss_\nn$ in~$\MM$, and $\nu(\gga')$ is the class of $\ss'_1 \sep \pdots \sep \ss'_\nn$. The assumption $\nu(\gga) = \nu(\gga')$ means that these words are connected by a finite sequence of defining relations of~$\MM$. By Lemma~\ref{L:Distinct}, the map $(\xx_1 \wdots \xx_\nn) \mapsto (\Pol_1(\xx_1) \wdots \Pol_\nn(\xx_1 \wdots \xx_\nn) )$ of~$\Seq\SS\nn$ to itself is surjective, so we can assume without loss of generality that $\ss_1 \sep \pdots \sep \ss_\nn$ and $\ss'_1 \sep \pdots \sep \ss'_\nn$ are connected by one relation exactly, that is, there exist~$\ii$ satisfying $\ss_{\ii+1} = \ss_\ii\op\ss'_\ii$, $\ss'_{\ii+1} = \ss'_\ii\op\ss_\ii$ and $\ss'_\kk = \ss_\kk$  for $\kk \not= \ii, \ii+1$. The relations $\ss'_\kk = \ss_\kk$ inductively imply $\rr'_\kk = \rr_\kk$ for $\kk < \ii$. Next, writing $\vec\rr$ for $\rr_1 \wdots \rr_{\ii-1}$, we have $\ss_\ii = \Pol_\ii(\vec\rr, \rr_\ii)$ and $\ss'_\ii = \Pol_\ii(\vec\rr, \rr'_\ii)$. Then, we find 
$$\Pol_\ii(\vec\rr, \rr_\ii) \op \Pol_\ii(\vec\rr, \rr_{\ii+1}) = \Pol_{\ii+1}(\vec\rr, \rr_\ii, \rr_{\ii+1}) = \ss_{\ii+1} = \ss_\ii\op\ss'_\ii = \Pol_\ii(\vec\rr, \rr_\ii) \op \Pol_\ii(\vec\rr, \rr'_\ii).$$
As the left-translation by~$\Pol_\ii(\vec\rr, \rr_\ii)$ is injective, we deduce $\Pol_\ii(\vec\rr, \rr_{\ii+1}) =\nobreak \Pol_\ii(\vec\rr, \rr'_\ii)$, whence $\rr_{\ii+1} =\nobreak \rr'_\ii$ by Lemma~\ref{L:Distinct}\ITEM1. A symmetric argument gives $\rr'_{\ii+1} = \rr_\ii$. Finally, for $\kk > \ii+1$, the relations $\ss'_\kk = \ss_\kk$ inductively imply $\rr'_\kk = \rr_\kk$. Indeed, we have 
$$\Pol_\kk(\vec\rr, \rr_\ii, \rr'_\ii, \rr_{\ii+2} \wdots \rr_\kk) = \ss_\kk = \ss'_\kk = \Pol_\kk(\vec\rr, \rr'_\ii, \rr_\ii, \rr_{\ii+2} \wdots \rr'_\kk),$$
and, by Lemma~\ref{L:Pol}, switching the non-final entries~$\rr_\ii$ and~$\rr'_\ii$ in~$\Pol_\kk$ changes nothing, so $\rr'_\kk = \rr_\kk$ follows by Lemma~\ref{L:Distinct}. So the words $\rr_1 \sep \pdots \sep \rr_\nn$ and $\rr'_1 \sep \pdots \sep \rr'_\nn$ are obtained by switching two (adjacent) entries, hence they represent the same element in~$\NNNN^{(\SS)}$. Hence $\nu$ is injective, and it provides the expected right-$I$-structure on~$\MM$.
\end{proof}

Note that Proposition~\ref{P:RCItype} does not require the set~$\SS$ to be finite.

In the other direction, we prove that every finitely generated monoid of $I$-type is associated with an RC-quasigroup, an explicit version of the YBE results of~\cite{Gav, JeO}:

\begin{prop}
\label{P:IType}
If $\MM$ is a finitely generated monoid of right-$I$-type, there exists a unique finite RC-quasi\-group~$(\SS, \op)$ such that $\MM$ is the structure monoid of~$(\SS, \op)$: the set~$\SS$ is the atom set of~$\MM$ and $\op$ is determined by $\ss\op\tt = \ss\under\tt$ for $\ss\not=\tt$ and $\{\ss\op\ss\} = \SS \setminus \{\ss\under\tt \mid \tt\not=\nobreak\ss\}$. Moreover, the right-$I$-structure on~$\MM$ is unique: it is defined from the operation~$\op$ by $\nu(\ss_1 \pdots \ss_\nn) = \SPol_\nn(\ss_1 \wdots \ss_\nn)$. 
\end{prop}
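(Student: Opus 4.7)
The plan is to read an RC-quasigroup structure off the atoms of~$\MM$ using the $I$-structure~$\nu$, and then identify $\MM$ with the corresponding structure monoid by a cardinality argument. Iterating~\eqref{E:IStrBis} expresses every $\nu(\ss_1+\cdots+\ss_\nn)$ as a length-$\nn$ product of elements of~$\SS$, so $\gg \mapsto \ell(\nu\inv(\gg))$ is a length function on~$\MM$ whose atoms are exactly the elements of~$\SS$; since $\MM$ is finitely generated, $\SS$ is finite. For left-cancellation, fix $\ss \in \SS$ and let $M_\nn$ denote the length-$\nn$ part of~$\MM$. Putting $\ss$ first in an ordering yields the inclusion $\nu\bigl(\{\gga : \ss \in \mathrm{supp}(\gga),\ \ell(\gga) = \nn+1\}\bigr) \subseteq \ss\cdot M_\nn$; the left-hand set is in bijection with $\NNNN^\SS_\nn$ via $\gga \mapsto \gga - \ss$ and so has cardinality~$|M_\nn|$, while the right-hand set, being the image of the surjection $M_\nn \twoheadrightarrow \ss M_\nn$, has cardinality at most~$|M_\nn|$. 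Finiteness of~$\SS$ forces all these to be equal, so that surjection is a bijection and left-multiplication by~$\ss$ is injective; left-cancellation by arbitrary elements then follows by induction on length.

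Next, define $\ss \op \tt := \psi(\ss)(\tt)$; the left-translations of~$\op$ are bijections by construction, and the equality $\nu(\ss+\tt) = \nu(\tt+\ss)$ immediately gives the defining relation $\ss(\ss\op\tt) = \tt(\tt\op\ss)$ in~$\MM$. For the RC-law, I would compare the two factorizations of~$\nu(\xx+\ss+\tt)$ obtained by adding the atoms in the orders $\xx,\ss,\tt$ and $\xx,\tt,\ss$; left-cancelling~$\xx$ gives
\[
(\xx \op \ss) \cdot \psi(\xx+\ss)(\tt) \;=\; (\xx \op \tt) \cdot \psi(\xx+\tt)(\ss).
\]
When $\ss \ne \tt$ (so $\xx\op\ss \ne \xx\op\tt$), the $I$-structure forces any length-two element whose $\nu$-preimage has support~$\{\alpha,\beta\}$ with $\alpha \ne \beta$ to admit exactly the two first-atom factorizations $\alpha(\alpha\op\beta) = \beta(\beta\op\alpha)$, and this yields the key identity
\[
\psi(\xx+\ss)(\tt) \;=\; (\xx\op\ss) \op (\xx\op\tt) \qquad (\ss \ne \tt).
\]
The diagonal case $\ss = \tt$ is then recovered from the observation that both $\tt \mapsto \psi(\xx+\ss)(\tt)$ and $\tt \mapsto (\xx\op\ss)\op(\xx\op\tt)$ are permutations of~$\SS$ that already agree on $\SS \setminus \{\ss\}$, hence agree everywhere. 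Applying the equality $\psi(\xx+\ss) = \psi(\ss+\xx)$ in~$\NNNN^{(\SS)}$ to~$\tt$ then gives the RC-law $(\xx\op\ss)\op(\xx\op\tt) = (\ss\op\xx)\op(\ss\op\tt)$.

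With $(\SS, \op)$ now an RC-quasigroup, let $\MM'$ denote its structure monoid. The relations~\eqref{E:Str} hold in~$\MM$, so the identity on~$\SS$ extends to a surjective length-preserving monoid morphism $\varphi: \MM' \to \MM$; by Proposition~\ref{P:RCItype}, both monoids are in length-preserving bijection with~$\NNNN^\SS$, so on each finite length stratum $\varphi$ is a surjection between equal-size finite sets, hence a bijection, and $\varphi$ is an isomorphism. Applying Proposition~\ref{P:Cancel} to $\MM = \MM'$ recovers the formulas $\ss \op \tt = \ss \under \tt$ for $\ss \ne \tt$ and $\{\ss\op\ss\} = \SS \setminus \{\ss\under\tt : \tt \ne \ss\}$, so $(\SS, \op)$ is determined by~$\MM$ alone and is therefore unique; uniqueness of~$\nu$ follows because any right-$I$-structure on~$\MM$ produces the same $\op$ via $\ss\op\tt = \psi(\ss)(\tt)$, whereupon Proposition~\ref{P:RCItype} forces $\nu(\ss_1\cdots\ss_\nn) = \SPol_\nn(\ss_1\wdots\ss_\nn)$. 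The main obstacle is the RC-law derivation: the diagonal case $\ss = \tt$ is invisible to the three-letter cube that handles the generic case, and is overcome here by the permutation-extension argument described above.
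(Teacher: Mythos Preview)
Your argument is essentially the paper's: the derivation of the RC-law matches Lemma~\ref{L:NuRC} almost verbatim, including the permutation-extension trick for the diagonal case. You improve on the paper in two places: your counting argument for left-cancellativity is self-contained where the paper merely cites \cite[Lemma~8.1.2(6)]{JeOBook}, and your length-stratum cardinality argument for $\MM' \cong \MM$ avoids the paper's appeal to the presentation result \cite[Prop.~4.1]{Dfx}. Both substitutions are clean and exploit the finiteness of~$\SS$ nicely.

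One point needs tightening. Your key identity $\psi(\xx+\ss)(\tt) = (\xx\op\ss)\op(\xx\op\tt)$ is derived only for atoms~$\xx \in \SS$; that suffices for the RC-law, but not for the uniqueness of~$\nu$. Invoking Proposition~\ref{P:RCItype} at the end does not close the gap, since that proposition constructs one particular $I$-structure without asserting it is the only one. The fix is immediate: your three-letter comparison of $\nu(\xx+\ss+\tt)$ works verbatim with~$\xx$ replaced by an arbitrary $\gga \in \NNNN^{(\SS)}$ (left-cancel $\nu(\gga)$ instead of~$\xx$), yielding $\psi(\gga\ss)(\tt) = \psi(\gga)(\ss)\op\psi(\gga)(\tt)$ as in~\eqref{E:TheRel2}; an induction on~$\nn$ then gives $\psi(\ss_1\cdots\ss_{\nn-1})(\ss_\nn) = \Pol_\nn(\ss_1 \wdots \ss_\nn)$ and hence $\nu(\ss_1\cdots\ss_\nn) = \SPol_\nn(\ss_1 \wdots \ss_\nn)$ for \emph{every} right-$I$-structure~$\nu$, which is the content of~\eqref{E:Nu} in the paper.
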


Once again our proof relies on RC-calculus. One of the benefits is to obtain an argument that, although complete, in particular for establishing~\eqref{E:Product} below, is shorter than that of~\cite[Chapter~8]{JeOBook}. Another benefit is that Lemma~\ref{L:NuRC} extends to a nonfree Abelian monoid, which will be useful in Section~\ref{S:Coxeter}.

\begin{lemm}
\label{L:NuRC}
Assume that $\MM$ is a left-cancellative monoid and $\nu$ is a right-$I$-structure of shape~$(\MMb, \SS)$ on~$\MM$, where $\MMb$ is Abelian and  satisfies
\begin{equation}
\label{E:Condition}
\forall\ss, \tt, \ss', \tt' \in \SS\,  ((\ss \not= \tt \text{ and } \ss\tt' = \tt\ss') \Rightarrow (\ss' = \ss   \text{ and } \tt' = \tt)).
\end{equation}
Put $\ss \op \tt = \psi(\ss)(\tt)$ for~$\ss, \tt$ in~$\SS$. Then $(\SS, \op)$ is an RC-quasigroup.
\end{lemm}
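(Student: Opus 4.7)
The plan is to exploit the commutativity of $\MMb$ to derive the RC-law; bijectivity of the left-translations of $\op$ is immediate since each $\psi(\ss)$ is a permutation of $\SS$ by definition of the $I$-structure.

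Iterating \eqref{E:IStrBis} gives
\[
\nu(\xx\yy\zz) = \xx \cdot (\xx\op\yy) \cdot \psi(\xx\yy)(\zz)
\quad\text{and}\quad
\nu(\xx\zz\yy) = \xx \cdot (\xx\op\zz) \cdot \psi(\xx\zz)(\yy).
\]
Since $\MMb$ is Abelian, $\xx\yy\zz = \xx\zz\yy$, so the two products are equal in $\MM$, and left-cancellativity yields $(\xx\op\yy) \cdot \psi(\xx\yy)(\zz) = (\xx\op\zz) \cdot \psi(\xx\zz)(\yy)$ in $\MM$. I would then lift this equality to $\MMb$ via $\nu\inv$: applying \eqref{E:IStrBis} with $\gga = \ss \in \SS$ gives $\nu\inv(\ss \cdot \tau) = \ss \cdot \psi(\ss)\inv(\tau)$ for any $\tau \in \SS$, so both sides correspond, in $\MMb$, to length-two generator products of the form $\ss \cdot \tt'$ and $\tt \cdot \ss'$ with $\ss = \xx\op\yy$ and $\tt = \xx\op\zz$. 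When $\yy \not= \zz$, we have $\ss \not= \tt$ (since $\psi(\xx)$ is injective), so \eqref{E:Condition} forces $\ss' = \ss$ and $\tt' = \tt$; unwinding $\tt' = \tt$ gives
\[
\psi(\xx\yy)(\zz) = \psi(\xx\op\yy)(\xx\op\zz) = (\xx\op\yy) \op (\xx\op\zz) \quad\text{for } \zz \not= \yy.
\]

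Both $\psi(\xx\yy)$ and $\psi(\xx\op\yy) \circ \psi(\xx)$ are permutations of $\SS$ that agree on $\SS \setminus \{\yy\}$, so they must coincide on all of $\SS$; the formula above therefore holds for every $\zz \in \SS$. Since $\xx\yy = \yy\xx$ in $\MMb$ forces $\psi(\xx\yy) = \psi(\yy\xx)$, comparing this formula with its counterpart obtained by swapping $\xx$ and $\yy$ yields the RC-law $(\xx\op\yy) \op (\xx\op\zz) = (\yy\op\xx) \op (\yy\op\zz)$.

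The main technical subtlety is the passage back and forth between $\MM$ and $\MMb$ through $\nu$ and $\nu\inv$, needed to bring \eqref{E:Condition} to bear on an equality that naturally lives in $\MM$; once the formula for $\psi(\xx\yy)$ is established on $\SS \setminus \{\yy\}$, the extension to all of $\SS$ and the conclusion are immediate.
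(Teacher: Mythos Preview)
Your argument is correct and follows essentially the same route as the paper's proof: derive a length-two equality in~$\MM$ from the commutativity of~$\MMb$, pull it back to~$\MMb$ via~$\nu\inv$ to invoke condition~\eqref{E:Condition}, obtain $\psi(\xx\yy)(\zz) = (\xx\op\yy)\op(\xx\op\zz)$ for $\zz\neq\yy$, extend to all~$\zz$ by the two-permutations-agreeing-off-one-point trick, and conclude via $\psi(\xx\yy)=\psi(\yy\xx)$. The only organizational difference is that the paper first isolates a general claim (for every $\ss\neq\tt$ in~$\SS$, the only relation $\ss\tt'=\tt\ss'$ in~$\MM$ is $\ss(\ss\op\tt)=\tt(\tt\op\ss)$) and then applies it with an arbitrary prefix~$\gga\in\MMb$, whereas you specialize immediately to $\gga=\xx\in\SS$; since only that case is needed for the RC-law, your version is marginally more direct.
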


\begin{proof}
We claim that, for all~$\ss, \tt, \ss', \tt'$ in~$\SS$ with $\ss \not= \tt$, the only equality $\ss\tt' = \tt\ss'$ holding in~$\MM$ is $\ss(\ss\op\tt) = \tt(\tt\op\ss)$. Indeed, by assumption, $\MMb$ is Abelian and, in~$\MM$, using~\eqref{E:IStrBis}, we obtain $\ss(\ss \op \tt) = \ss \psi(\ss)(\tt) = \nu(\ss\tt)  = \nu(\tt\ss) = \tt \psi(\tt)(\ss) = \tt(\tt \op \ss)$. On the other hand, assume $\ss \tt' = \tt \ss'$ in~$\MM$. Let $\tt'\refer = \psi(\ss)\inv(\tt')$ and $\ss'\refer = \psi(\tt)\inv(\ss')$. Always using~\eqref{E:IStrBis}, we find $\nu(\ss\tt'\refer) = \ss\tt' = \tt\ss' = \nu(\tt\ss'\refer)$ in~$\MM$, whence $\ss \tt'\refer = \tt\ss'\refer$ in~$\MMb$ since $\nu$ is bijective. The assumption on~$\MMb$ implies $\ss'\refer = \ss$ and $\tt'\refer = \tt$, whence $\ss' = \psi(\tt)(\ss) = \tt \op \ss$ and $\tt' = \psi(\ss)(\tt) = \ss \op \tt$. This establishes the claim.

Now, let $\gga$ belong to~$\MMb$ and $\ss, \tt$ belong to~$\SS$. Using~\eqref{E:IStrBis}, we find 
$$\nu(\gga\ss\tt) = \nu(\gga\ss) \cdot \psi(\gga\ss)(\tt) = \nu(\gga) \cdot \psi(\gga)(\ss) \cdot \psi(\gga\ss)(\tt),$$
and, similarly, $\nu(\gga\tt\ss) = \nu(\gga) \cdot \psi(\gga)(\tt) \cdot \psi(\gga\tt)(\ss)$. By assumption, $\MMb$ is Abelian, so we have $\gga\ss\tt = \gga\tt\ss$, whence $\nu(\gga\ss\tt) = \nu(\gga\tt\ss)$ and, merging the above expressions and left-cancelling~$\nu(\gga)$, which is legal as $\MM$ is left-cancellative, we find
\begin{equation}
\label{E:TheRel1}
\psi(\gga)(\ss) \cdot \psi(\gga\ss)(\tt) = \psi(\gga)(\tt) \cdot \psi(\gga\tt)(\ss).
\end{equation}
Assume first $\ss \not= \tt$. The elements $\psi(\gga)(\ss)$, $\psi(\gga\ss)(\tt)$, $\psi(\gga)(\tt)$, and $\psi(\gga\tt)(\ss)$ lie in~$\SS$, so, by the claim above, \eqref{E:TheRel1} implies
\begin{equation}
\label{E:TheRel2}
\psi(\gga\ss)(\tt) = \psi(\gga)(\ss)\, \op\, \psi(\gga)(\tt) \mbox{\quad and \quad } \psi(\gga\tt)(\ss) = \psi(\gga)(\tt)\, \op\, \psi(\gga)(\ss).
\end{equation}
When $\tt$ ranges over $\SS \setminus \{\ss\}$, the element $\psi(\gga)(\tt)$ ranges over $\SS \setminus \{\psi(\gga)(\ss)\}$, and $\psi(\gga)(\ss)\, \op\, \psi(\gga)(\tt)$ ranges over $\SS \setminus \{\psi(\gga)(\ss)\, \op\, \psi(\gga)(\ss)\}$. As $\psi(\gga\ss)$ is a bijection of~$\SS$, the only possibility is therefore $\psi(\gga\ss)(\ss) = \psi(\gga)(\ss)\, \op\, \psi(\gga)(\ss)$. Hence \eqref{E:TheRel2} is valid in~$\SS$ for all~$\gga$ in~$\MMb$ and~$\ss, \tt$ in~$\SS$.

Now, let $\rr$ lie in~$\SS$. Making $\gga = \rr$ in~\eqref{E:TheRel2} and applying the definition of~$\op$ gives $\psi(\rr\ss)(\tt) = (\rr\op\ss)\op(\rr\op\tt)$ and, similarly, $\psi(\ss\rr)(\tt) = (\ss\op\rr)\op(\ss\op\tt)$. Now, in~$\MMb$, we have $\rr\ss = \ss\rr$, whence $\psi(\rr\ss)(\tt) = \psi(\ss\rr)(\tt)$, and this gives $(\rr\op\ss)\op(\rr\op\tt) = (\ss\op\rr)\op(\ss\op\tt)$, the RC-law. So $(\SS, \op)$ is an RC-system. Moreover, by definition, $\psi(\ss)$ belongs to~$\Sym_\SS$, so the left-translations of~$\op$ are one-to-one, and $(\SS, \op)$ is an RC-quasigroup.
\end{proof}

\begin{lemm}
\label{E:NuRCBis}
Assume that $\MM$ is a left-cancellative monoid and $\nu$ is a right-$I$-structure of shape~$(\MMb, \SS)$ on~$\MM$, where $\MMb$ is Abelian and  satisfies~\eqref{E:Condition}. Then, for every~$\pp \ge 1$ and for all~$\ss_1 \wdots \ss_\pp$ in~$\SS$, we have
\begin{equation}
\label{E:Nu}
\psi(\ss_1 \pdots \ss_{\pp-1})(\ss_\pp) = \Pol_\pp(\ss_1 \wdots \ss_\pp)
\mbox{\quad and\quad} 
\nu(\ss_1 \pdots \ss_\pp) = \SPol_\pp(\ss_1 \wdots \ss_\pp),
\end{equation}
where $\Pol_\pp$ and $\SPol_\pp$ are associated with~$\op$ as in Section~\ref{S:RCCalculus} and values are taken in~$\MM$. 
Moreover, for all $\gga, \hha$ in~$\NNNN^{(\SS)}$, we have
\begin{equation}
\label{E:Product}
\nu(\gga\hha) = \nu(\gga)\nu(\psi(\gga)[\hha]) \text{\quad and\quad } \psi(\gga\hha) = \psi(\psi(\gga)[\hha]) \comp \psi(\gga)
\end{equation}
where $\psi(\gga)[\hha]$ is the result of applying $\psi(\gga)$ to~$\hha$ componentwise.
\end{lemm}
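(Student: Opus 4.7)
The plan is to prove both displayed formulas by induction, with the linchpin being the identity
\[ \psi(\gga\ss)(\tt) = \psi(\gga)(\ss) \op \psi(\gga)(\tt) \qquad (\gga \in \MMb,\ \ss, \tt \in \SS) \]
that was derived inside the proof of Lemma~\ref{L:NuRC}. Using $\ss \op \tt = \psi(\ss)(\tt)$, this identity rewrites as the composition rule $\psi(\gga\ss) = \psi(\psi(\gga)(\ss)) \comp \psi(\gga)$, which is already the one-generator case ($\hha = \ss$) of the second equality in~\eqref{E:Product}. Note also that the $I$-structure convention $\nu(\ss) = \ss$ for $\ss \in \SS$ forces $\psi(1) = \ID_\SS$ via $\ss = \nu(1 \cdot \ss) = \nu(1)\,\psi(1)(\ss)$, which anchors both inductions.

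For~\eqref{E:Nu}, I would induct on~$\pp$. The case $\pp = 1$ is immediate. For the inductive step, apply the key identity with $\gga = \ss_1 \pdots \ss_{\pp-1}$ and $\ss = \ss_\pp$, and use the inductive hypothesis to rewrite both factors; this yields, for every~$\tt \in \SS$,
\[ \psi(\ss_1 \pdots \ss_\pp)(\tt) = \Pol_\pp(\ss_1 \wdots \ss_\pp) \op \Pol_\pp(\ss_1 \wdots \ss_{\pp-1}, \tt) = \Pol_{\pp+1}(\ss_1 \wdots \ss_\pp, \tt), \]
the last equality being the recursive definition of~$\Pol_{\pp+1}$. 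The corresponding formula for~$\nu$ then follows from $\nu(\gga\ss) = \nu(\gga)\,\psi(\gga)(\ss)$ and the recursion $\SPol_{\pp}(\ss_1 \wdots \ss_\pp) = \SPol_{\pp-1}(\ss_1 \wdots \ss_{\pp-1}) \cdot \Pol_{\pp}(\ss_1 \wdots \ss_{\pp})$ noted after Definition~\ref{D:SPol}.

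For~\eqref{E:Product}, I would induct on the length of~$\hha$ as an $\SS$-product. The base case $\hha = 1$ is trivial since $\psi(\gga)[1] = 1$, $\nu(1) = 1$, and $\psi(1) = \ID$. For the inductive step, write $\hha = \hha'\ss$ and compute, using the $I$-structure property and the inductive hypothesis,
\[ \nu(\gga\hha'\ss) = \nu(\gga\hha')\,\psi(\gga\hha')(\ss) = \nu(\gga)\,\nu(\psi(\gga)[\hha']) \cdot \psi(\psi(\gga)[\hha'])(\psi(\gga)(\ss)). \]
Reapplying the $I$-structure property (backwards) to the pair $\bigl(\psi(\gga)[\hha'],\, \psi(\gga)(\ss)\bigr)$ folds the last two factors into $\nu(\psi(\gga)[\hha'] \cdot \psi(\gga)(\ss)) = \nu(\psi(\gga)[\hha'\ss])$, which is the desired expression. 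The companion formula for~$\psi$ is obtained in parallel by applying the one-generator composition rule with $\alpha = \psi(\gga)[\hha']$ and $\ss' = \psi(\gga)(\ss)$, and then composing with the inductive hypothesis for $\psi(\gga\hha')$.

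The main obstacle here is less any individual calculation than the bookkeeping needed to track three simultaneously nested occurrences of~$\psi$ and to verify that the componentwise action $\hha \mapsto \psi(\gga)[\hha]$ is well defined on~$\NNNN^{(\SS)}$ (immediate, since the reference monoid is free Abelian and $\psi(\gga)$ is a permutation of~$\SS$) and distributes over concatenation so that $\psi(\gga)[\hha'\ss] = \psi(\gga)[\hha'] \cdot \psi(\gga)(\ss)$. Once these points are granted, both induction steps reduce to routine substitutions.
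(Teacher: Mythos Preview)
Your argument is correct. For~\eqref{E:Nu} you follow the paper almost verbatim: induction on~$\pp$ using the identity $\psi(\gga\ss)(\tt) = \psi(\gga)(\ss) \op \psi(\gga)(\tt)$ (which is~\eqref{E:TheRel2} in the proof of Lemma~\ref{L:NuRC}) together with the recursive definitions of~$\Pol_\pp$ and~$\SPol_\pp$.

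For~\eqref{E:Product} your route differs from the paper's. You run a simultaneous induction on the $\SS$-length of~$\hha$, feeding the one-generator composition rule $\psi(\gga\ss) = \psi(\psi(\gga)(\ss)) \comp \psi(\gga)$ back into itself; both formulas advance together and no further ingredient is needed beyond the $I$-structure relation $\nu(\gga\ss) = \nu(\gga)\,\psi(\gga)(\ss)$. The paper instead first records the formal polynomial identity
\[
\SPol_{\pp+\qq}(\vec\xx, \yy_1 \wdots \yy_\qq) = \SPol_\pp(\vec\xx) \cdot \SPol_\qq(\Pol_{\pp+1}(\vec\xx, \yy_1) \wdots \Pol_{\pp+1}(\vec\xx, \yy_\qq)),
\]
which, once~\eqref{E:Nu} is in hand, gives the $\nu$-formula in~\eqref{E:Product} in one stroke; the $\psi$-formula is then extracted by computing $\nu(\gga\hha\ss)$ in two ways and invoking left-cancellativity of~$\MM$. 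Your approach is more self-contained and avoids stating an auxiliary identity; the paper's approach makes transparent that~\eqref{E:Product} is the shadow of a purely formal identity among the polynomials~$\SPol$, which fits the ``RC-calculus'' theme of Section~\ref{S:RCCalculus}. Either way the bookkeeping you flag (well-definedness and multiplicativity of $\hha \mapsto \psi(\gga)[\hha]$ on~$\NNNN^{(\SS)}$) is indeed the only point requiring care, and it is immediate for the reasons you give.
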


\begin{proof}
We begin with the left equality in~\eqref{E:Nu}, which is proved using induction on~$\pp$. For $\pp =\nobreak 1$, we have $\psi(1)(\ss_1) = \ss_1 = \Pol_1(\ss_1)$ and, for $\pp = 2$, we have $\psi(\ss_1)(\ss_2) = \ss_1 \op \ss_2 =\nobreak \Pol_2(\ss_1, \ss_2)$. For $\pp \ge 3$, using~\eqref{E:TheRel2}, the induction hypothesis, and the inductive definition of the monomials~$\Pol_\pp$, we find
\begin{multline*}
\HS{5}\psi(\ss_1 \pdots \ss_{\pp-1})(\ss_\pp) 
= \psi(\ss_1 \pdots \ss_{\pp-2})(\ss_{\pp-1}) \, \op \, \psi(\ss_1 \pdots \ss_{\pp-2})(\ss_\pp)\\ 
= \Pol_{\pp-1}(\ss_1 \wdots \ss_{\pp-1}) \, \op \, \Pol_{\pp-1}(\ss_1 \wdots \ss_{\pp-2}, \ss_\pp) 
= \Pol_\pp(\ss_1 \wdots \ss_\pp).\HS{5}
\end{multline*}
The right equality in~\eqref{E:Nu} then follows using a similar induction on~$\pp$. For $\pp =  1$, we have $\nu(\ss_1) = \ss_1 = \SPol(\ss_1)$. For $\pp \ge 2$, using~\eqref{E:IStrBis}, the left equality in~\eqref{E:Nu}, and the inductive definition of~$\SPol_\pp$, we find
\begin{multline*}
\HS{5}\nu(\ss_1 \pdots \ss_\pp) = \nu(\ss_1 \pdots \ss_{\pp-1}) \cdot \psi(\ss_1 \pdots \ss_{\pp-1})(\ss_\pp)\\ 
= \SPol_{\pp-1}(\ss_1 \wdots \ss_{\pp-1}) \cdot \Pol_\pp(\ss_1 \wdots \ss_\pp) = \SPol_\pp(\ss_1 \wdots \ss_\pp).\HS{5}
\end{multline*}

The definition of~$\Pol_\nn$ implies, for~Ê$\pp, \qq \ge 1$, the formal equality
\begin{equation*}
\label{E:PolId1}
\Pol_{\pp+\qq}(\vec\xx, \yy_1 \wdots \yy_\qq) = \Pol_\qq(\Pol_{\pp+1}(\vec\xx, \yy_1) \wdots \Pol_{\pp+1}(\vec\xx, \yy_\qq)),
\end{equation*}
where $\vec\xx$ stands for $\xx_1 \wdots \xx_\pp$; this is a formal identity, not using the RC-law or any specific relation; for instance, it says that $\Pol_3(\xx, \yy_1, \yy_2)$, that is, $(\xx\op\yy_1)\op(\xx\op\yy_2)$, is also $\Pol_2(\Pol_2(\xx, \yy_1), \Pol_2(\xx, \yy_2)$). With the same convention, one deduces 
\begin{equation}
\label{E:PolId2}
\SPol_{\pp+\qq}(\vec\xx, \yy_1 \wdots \yy_\qq) = \SPol_\pp(\vec\xx) \cdot \SPol_\qq(\Pol_{\pp+1}(\vec\xx, \yy_1 ) \wdots \Pol_{\pp+1}(\vec\xx, \yy_\qq)).
\end{equation}
Now, assume that $\gga, \hha$ lie in~$\NNNN^{(\SS)}$. Write $\gga = \ss_1 \pdots \ss_\pp$ and $\hha = \tt_1 \pdots\tt_\qq$ with $\ss_1 \wdots \tt_\qq$ in~$\SS$. The right equality in~\eqref{E:Nu} gives $\nu(\gga\hha) = \SPol_{\pp+\qq}(\ss_1 \wdots \ss_\pp, \tt_1 \wdots \tt_\qq)$. On the other hand, we have $\nu(\gga) = \SPol_\pp(\ss_1 \wdots \ss_\pp)$ and the left equality in~\eqref{E:Nu} implies $\psi(\gga)(\tt) = \Pol_{\pp+1}(\ss_1 \wdots \ss_\pp, \tt)$ for every~$\tt$, whence in particular 
$$\nu(\psi(\gga)(\tt)) = \SPol_\qq(\Pol_{\pp+1}(\ss_1 \wdots \ss_\pp, \tt_1) \wdots \Pol_{\pp+1}(\ss_1 \wdots \ss_\pp, \tt_\qq)).$$
Merging with~\eqref{E:PolId2}, we obtain the left formula in~\eqref{E:Product}.

Finally, assume $\ss \in \SS$. On the one hand, \eqref{E:IStrBis} gives $\nu(\gga\hha\ss) = \nu(\gga\hha) \psi(\gga\hha)(\ss)$. On the other hand, the left formula in~\eqref{E:Product} gives
\begin{multline*}
\nu(\gga\hha\ss) 
= \nu(\gga) \cdot \nu(\psi(\gga)[\hha\ss]) 
= \nu(\gga) \cdot \nu(\psi(\gga)[\hha] \cdot \psi(\gga)(\ss))\\
= \nu(\gga) \cdot \nu(\psi(\gga)[\hha]) \cdot \psi(\psi(\gga)[\hha])(\psi(\gga)(\ss)) 
= \nu(\gga\hha) \cdot \psi(\psi(\gga)[\hha])(\psi(\gga)(\ss)).
\end{multline*}
Merging the two expressions and using the assumption that $\MM$ is left-cancellative, we deduce $\psi(\gga\hha)(\ss) = \psi(\psi(\gga)[\hha])(\psi(\gga)(\ss))$, which is the right equality in~\eqref{E:Product}.
\end{proof}

We now connect left-divisibility with the $I$-structure, in the case of a right-$I$-structure of shape~$(\NNNN^{(\SS)}, \SS)$, that is, a genuine right-$I$-structure.

\begin{lemm}
\label{L:IStrDiv}
Assume that $\nu$ is a right-$I$-structure based on~$\SS$ in a monoid~$\MM$. 

\ITEM1 There exists a length function on~$\MM$ and $\SS$ is the atom set in~$\MM$.

\ITEM2 The map~$\nu$ is compatible with left-division in the sense that, for all~$\gga, \hha$ in~$\NNNN^{(\SS)}$, we have $\gga \dive \hha$ in~$\NNNN^{(\SS)}$ if and only if $\nu(\gga) \dive \nu(\hha)$ holds in~$\MM$. 

\ITEM3 The monoid~$\MM$ admits right-lcms.
\end{lemm}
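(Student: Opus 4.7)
The plan is to iterate the defining identity $\nu(\gga\ss) = \nu(\gga) \cdot \psi(\gga)(\ss)$ of~\eqref{E:IStrBis}, exploiting the bijectivity of each permutation~$\psi(\gga)$ of~$\SS$ together with the bijectivity of $\nu : \NNNN^{(\SS)} \to \MM$. These two ingredients will transfer right-multiplication by atoms between~$\NNNN^{(\SS)}$ and~$\MM$ in either direction, which turns out to suffice for all three assertions.

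For~\ITEM1, I would put $\lambda(\gg) = |\nu\inv(\gg)|$, where $|\cdot|$ denotes the sum of coordinates in~$\NNNN^{(\SS)}$. An induction on~$\nn$ using~\eqref{E:IStrBis} shows that every $\MM$-factorization $\gg = \ss_1 \pdots \ss_\nn$ with $\ss_\ii \in \SS$ satisfies $|\nu\inv(\gg)| = \nn$: if $\nu(\gga) = \ss_1 \pdots \ss_{\nn-1}$ with $|\gga| = \nn-1$, then, since $\psi(\gga)$ is a bijection of~$\SS$, there is a unique $\ss'_\nn \in \SS$ with $\psi(\gga)(\ss'_\nn) = \ss_\nn$, whence $\gg = \nu(\gga\ss'_\nn)$ and $|\gga\ss'_\nn| = \nn$. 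Hence $\lambda$ coincides with the atomic length on~$\MM$, so $\lambda$ is additive, $1$ is the only length-zero element, and the elements of~$\SS$ (which are non-trivial since $\nu(\ss)=\ss\ne 1=\nu(1)$) are exactly the length-one elements, hence the atoms of~$\MM$.

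For~\ITEM2, the forward direction is direct: if $\hha = \gga\cdot\ss_1\pdots\ss_\qq$ in~$\NNNN^{(\SS)}$, iterating~\eqref{E:IStrBis} gives $\nu(\hha) = \nu(\gga) \cdot \tt_1 \pdots \tt_\qq$ for suitable atoms~$\tt_\ii$, so $\nu(\gga) \dive \nu(\hha)$. Conversely, given $\nu(\hha) = \nu(\gga)\cdot\tt_1 \pdots \tt_\rr$ in~$\MM$, my plan is to inductively build $\gga^{(0)} = \gga$ and, using the bijectivity of each~$\psi(\gga^{(\ii-1)})$, pick $\ss_\ii \in \SS$ with $\psi(\gga^{(\ii-1)})(\ss_\ii) = \tt_\ii$, then set $\gga^{(\ii)} = \gga^{(\ii-1)} \ss_\ii$ in~$\NNNN^{(\SS)}$. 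A step-by-step application of~\eqref{E:IStrBis} gives $\nu(\gga^{(\rr)}) = \nu(\hha)$; injectivity of~$\nu$ then forces $\gga^{(\rr)} = \hha$, hence $\gga \dive \hha$ in~$\NNNN^{(\SS)}$.

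For~\ITEM3, note that $\NNNN^{(\SS)}$ is a lattice under~$\dive$, with componentwise maximum~$\vee$ as right-lcm. For $\gg, \hh \in \MM$, setting $\cc = \nu\inv(\gg) \vee \nu\inv(\hh)$, part~\ITEM2 shows that $\nu(\cc)$ is a common right-multiple of~$\gg$ and~$\hh$, and for any other common right-multiple~$\kk$, part~\ITEM2 again yields $\nu\inv(\gg), \nu\inv(\hh) \dive \nu\inv(\kk)$, whence $\cc \dive \nu\inv(\kk)$ and therefore $\nu(\cc) \dive \kk$. Overall the argument is routine once the role of~\eqref{E:IStrBis} is recognized; the only somewhat delicate point I foresee is the backward direction of~\ITEM2, where one must carefully invoke the injectivity of~$\nu$ to confirm that the iteratively reconstructed element in~$\NNNN^{(\SS)}$ is indeed~$\hha$ and not some other preimage.
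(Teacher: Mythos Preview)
Your proof is correct and follows essentially the same approach as the paper: define the length via~$\nu\inv$, push divisibility back and forth one atom at a time using~\eqref{E:IStrBis} and the bijectivity of~$\psi(\gga)$ and~$\nu$, and then transport right-lcms from~$\NNNN^{(\SS)}$. The only cosmetic difference is that in~\ITEM2 the paper reduces by induction to the single-step case $\nu(\hha) = \nu(\gga)\ss$, whereas you unroll the full chain $\gga^{(0)}, \dots, \gga^{(\rr)}$ explicitly; the content is identical.
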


\begin{proof}
\ITEM1 Defining $\lambda(\gg)$ to be the length of~$\nu\inv(\gg)$ provides a function from~$\MM$ to~$\NNNN$ that satisfies $\lambda(1) = 0$, $\lambda(\gg\hh) = \lambda(\gg) + \lambda(\hh)$, and $\lambda(\ss) = 1$ for every~$\ss$ in~$\SS$. It follows that $\MM$ contains no nontrivial invertible element, and that $\SS$ is the atom set of~$\MM$. 

\ITEM2 Assume $\gga \dive \hha$ in~$\NNNN^{(\SS)}$. For an induction on length, we may assume $\hha = \gga\ss$ with~$\ss$ in~$\SS$. Now, by~\eqref{E:IStrBis}, we have $\nu(\hha) = \nu(\gga) \psi(\gga)(\ss)$, whence $\nu(\gga) \dive \nu(\hha)$ in~$\MM$. Conversely, assume $\nu(\gga) \dive \nu(\hha)$. Again, it is enough to consider the case $\nu(\hha) = \nu(\gga)\ss$ with~$\ss$ in~$\SS$. Now, as $\psi(\gga)$ is bijective, there exists a unique~$\rr$ in~$\SS$ satisfying $\psi(\gga)(\rr) = \ss$, and, by~\eqref{E:IStrBis}, we have then $\nu(\gga\rr) = \nu(\gga)\psi(\gga)(\rr) = \nu(\gga)\ss = \nu(\hha)$, whence $\hha = \gga\rr$ since $\nu$ is injective, and $\gga \dive \hha$ in~$\NNNN^{(\SS)}$.

\ITEM3 The monoid~$\NNNN^{(\SS)}$ admits right-lcms, and \ITEM2 enables us to transfer the result to~$\MM$. So, let $\gg, \hh$ belong to~$\MM$. Put $\gga = \nu\inv(\gg)$ and $\hha = \nu\inv(\hh)$. Let $\gga\hha'$ be the right-lcm of~$\gga$ and~$\hha$ in~$\NNNN^{(\SS)}$. By~\ITEM2, $\nu(\gga\hha')$ is a common right-multiple of~$\gg$ and~$\hh$ in~$\MM$. Now, assume that $\ff$ is a common right-multiple of~$\gg$ and~$\hh$ in~$\MM$. By~\ITEM2 again, we have $\gga \dive \nu\inv(\ff)$ and~$\hha \dive \nu\inv(\ff)$ in~$\NNNN^{(\SS)}$, whence $\gga\hha' \dive \nu\inv(\ff)$. By~\ITEM2 once more, this implies $\nu(\gga\hha') \dive \ff$ in~$\MM$. So $\nu(\gga\hha')$ is a right-lcm of~$\gg$ and~$\hh$ in~$\MM$, and $\MM$ admits right-lcms.
\end{proof}

We can now easily complete the proof of Proposition~\ref{P:IType}.

\begin{proof}[Proof of Proposition~\ref{P:IType}]
Assume that $\nu$ is a right-$I$-structure on~$\MM$, bas\-ed on a set~$\SS$. By Lemma~\ref{L:IStrDiv}\ITEM1, $\SS$ must be the atom set of~$\MM$, and the assumption that $\MM$ is finitely generated implies that $\SS$ is finite. 

Next, the free Abelian monoid~$\NNNN^{(\SS)}$ satisfies the assumptions of Lemma~\ref{L:NuRC} and, therefore, the latter applies. Hence, if we define $\ss \op \tt = \psi(\ss)(\tt)$, then $(\SS, \op)$ is an RC-quasigroup. Moreover, as $\SS$ is finite, $(\SS, \op)$ is bijective by~\cite[Theorem~2]{RumYB}.

Assume $\ss \not= \tt \in \SS$. By~\eqref{E:IStrBis}, we have $\ss(\ss \op \tt) = \nu(\ss\tt) = \nu(\tt\ss) = \tt(\tt \op \ss)$, whereas, by Lemma~\ref{L:IStrDiv}\ITEM1 and \ITEM3, the monoid~$\MM$ admits unique right-lcms. Moreover, as $\SS$ is finite, the argument of~\cite[Lemma~8.1.2(6)]{JeOBook} shows that $\MM$ must be left-cancellative (for that point we have no alternative method and it is useless to repeat the original argument, which easily adapts). It follows that  $\ss(\ss \op \tt)$ is necessarily the right-lcm of~$\ss$ and~$\tt$, and we must have $\ss\op\tt = \ss\under\tt$. 

Finally, as $\MM$ is left-cancellative, admits a length function, and admits right-lcms, and as $\SS$ is the atom set of~$\MM$, it follows from~\cite[Prop.~4.1]{Dfx} that the list of all relations $\ss(\ss\under\tt) = \tt(\tt\under\ss)$ with $\ss \not= \tt \in \SS$ make a presentation of~$\MM$. By definition of~$\op$, this means that $\MM$ is the structure monoid of~$(\SS, \op)$. 

Finally, the connection between~$\nu$ and the polynomials~$\SPol$ is given by the right formula in~\eqref{E:Nu}. The uniqueness of~$\nu$ follows, as $\SS$ is the atom set of~$\MM$, and $\op$ is the only possible extension of the right-complement operation outside the diagonal that admits bijective left-translations, so they only depend on~$\MM$.
\end{proof}

As observed in~\cite{JeO}, \eqref{E:IStrBis} is reminiscent of a semi-direct product. For further reference, we now describe the connection formally, here in the slightly extended context of Lemma~\ref{L:NuRC}; the point is to have the formulas of~\eqref{E:Product} at hand.

\begin{prop}
\label{P:Wreath}
Assume that $\MM$ is a left-cancellative monoid, $\NN$ is either~$\NNNN$ or $\Zd$ for some~$\dd$, and $\nu$ maps~$\NN^{(\SS)}$ to~$\MM$. Then the following are equivalent:

\ITEM1 The map~$\nu$ is a right-$I$-structure of shape~$(\NN^{(\SS)}, \SS)$ on~$\MM$;

\ITEM2 There exists a map $\psi : \NN^{(\SS)} \to \Sym_\SS$ such that $\gg \mapsto (\nu\inv(\gg), \psi(\nu\inv(\gg))\inv)$ defines an injective homomorphism of~$\MM$ to the wreath product $\NN \wr \Sym_\SS$ whose first component is a bijection.
\end{prop}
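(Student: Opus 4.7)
The plan is to unpack both implications by comparing the wreath-product multiplication $(\aa,\sigma)(\bb,\tau) = (\aa + \sigma[\bb], \sigma\tau)$ with the $I$-structure relations, using the identities of~\eqref{E:Product} as the algebraic engine.

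For \ITEM1$\Rightarrow$\ITEM2, I would start from the right-$I$-structure $\nu$ with associated permutations~$\psi$ defined by~\eqref{E:IStrBis}, and put $\phi(\gg) := (\nu\inv(\gg), \psi(\nu\inv(\gg))\inv)$. The first component $\gg \mapsto \nu\inv(\gg)$ is the inverse of the bijection~$\nu$, which yields both its bijectivity and the injectivity of~$\phi$ for free. The homomorphism property $\phi(\gg\hh) = \phi(\gg)\phi(\hh)$ unpacks coordinate-wise to
\begin{gather*}
\nu\inv(\gg\hh) = \nu\inv(\gg) + \psi(\nu\inv(\gg))\inv[\nu\inv(\hh)],\\
\psi(\nu\inv(\gg\hh)) = \psi(\nu\inv(\hh)) \comp \psi(\nu\inv(\gg));
\end{gather*}
setting $\gga = \nu\inv(\gg)$, $\hha = \nu\inv(\hh)$, and $\bb = \psi(\gga)\inv[\hha]$ (so that $\psi(\gga)[\bb] = \hha$), these two equations become exactly the two equations of~\eqref{E:Product} with $\bb$ in place of~$\hha$. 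Lemma~\ref{E:NuRCBis} therefore supplies the homomorphism property in one stroke.

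For \ITEM2$\Rightarrow$\ITEM1, I would read $\nu$ off as the inverse of the first component of~$\phi$ (bijective by hypothesis), and $\psi$ off the second. Evaluating $\phi(1) = (1, \ID)$ in the wreath product yields $\nu(1) = 1$ and $\psi(1) = \ID$. For~\eqref{E:IStr}, I would apply the homomorphism property to $\phi(\nu(\gga) \cdot \nu(\ss))$ for $\ss \in \SS$; reading the first coordinate gives $\nu\inv(\nu(\gga) \cdot \nu(\ss)) = \gga + \psi(\gga)\inv[\ss]$, whence $\nu(\gga) \cdot \nu(\ss) = \nu(\gga \cdot \psi(\gga)\inv[\ss])$. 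As $\ss$ runs through~$\SS$ so does $\psi(\gga)\inv[\ss]$, so the set $\{\nu(\gga) \cdot \nu(\ss) \mid \ss \in \SS\}$ equals $\{\nu(\gga \cdot \tt) \mid \tt \in \SS\}$, which is~\eqref{E:IStr} after identifying~$\SS \subseteq \MM$ with $\nu(\SS)$. The normalisation $\nu(\ss) = \ss$ for $\ss \in \SS$, if not part of how~$\SS$ sits inside~$\MM$, can be absorbed as in the remark following Definition~\ref{D:IStr} by precomposing $\nu$ with the automorphism of~$\NN^{(\SS)}$ induced by a suitable permutation of~$\SS$.

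The only real content is packed into~\eqref{E:Product}, which encodes exactly how $\nu$ and $\psi$ interact with the product of~$\MM$; once these identities are available, both implications collapse into a direct dictionary with the wreath-product multiplication. The main bookkeeping subtlety—not a genuine obstacle—is the appearance of $\psi(\nu\inv(\gg))\inv$ rather than $\psi(\nu\inv(\gg))$ in the definition of~$\phi$: this is forced by the direction in which second coordinates compose in the wreath product ($\sigma\tau$) versus the ``twisted'' order $\psi(\hha) \comp \psi(\gga)$ coming out of~\eqref{E:Product}.
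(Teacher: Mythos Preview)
Your proposal is correct and follows the same route as the paper: both directions reduce to the product identities~\eqref{E:Product} of Lemma~\ref{E:NuRCBis}, and your substitution $\bb = \psi(\gga)\inv[\hha]$ matches the paper's computation with~$\sigma\inv[\hha]$. Your remark on the normalisation $\nu(\ss)=\ss$ in \ITEM2$\Rightarrow$\ITEM1 is in fact more careful than the paper's own argument, which passes from $\nu(\gga\hha) = \nu(\gga)\,\nu(\psi(\gga)[\hha])$ directly to $\nu(\gga\ss) = \nu(\gga)\,\psi(\gga)(\ss)$ without addressing it.
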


\begin{proof}
Assume that $\nu$ is a right $I$-structure of shape~$\NN^{(\SS)}$ on~$\MM$. Let $\psi$ be the map from~$\NN^{(\SS)}$ to~$\Sym_\SS$ of~\eqref{E:IStrBis}. Define $\iota: \MM \to \NN \wr \Sym_\SS$ by $\iota(\gg) = (\nu\inv(\gg), \psi(\nu\inv(\gg))\inv)$, where $\NN \wr \Sym_\SS$ is  $\NN^{(\SS)} \rtimes \Sym_\SS$ with $\Sym_\SS$ acting on~$\NN^{(\SS)}$ by permuting positions.

First, $\iota$ is injective and its first component is bijective, since $\nu\inv$ is bijective. 

In order to prove that \ITEM1 implies~\ITEM2, the point is to check that $\iota$ is a homomorphism. Let $\gg, \hh$ belong to~$\MM$. Putting $\gga = \nu\inv(\gg)$, $\hha = \nu\inv(\hh)$, $\sigma = \psi(\gga)$, and $\tau = \psi(\hha)$, we find $\iota(\gg) = (\gga, \sigma\inv)$ and $\iota(\hh) = (\hha, \tau\inv)$, whence, in $\NN \wr \Sym_\SS$,
\begin{equation}
\label{E:YBSemi}
\iota(\gg)\iota(\hh) = (\gga \sigma\inv[\hha], \sigma\inv \comp \tau\inv).
\end{equation}
On the other hand, the monoid~$\NN^{(\SS)}$ satisfies~\eqref{E:Condition}, so Lemma~\ref{E:NuRCBis} applies and, using the left formula in~\eqref{E:Product}, we find 
$$\nu(\gga \sigma\inv[\hha]) = \nu(\gga) \nu(\psi(\gga)(\sigma\inv[\hha])) = \gg \nu(\sigma[\sigma\inv[\hha]]) = \gg \nu(\hha) = \gg\hh,$$
whence $\nu\inv(\gg\hh) = \gga \sigma\inv[\hha]$. Using the right formula in~\eqref{E:Product} similarly, we find
$$\psi(\gga \sigma\inv[\hha]) = \psi(\psi(\gga)[\sigma\inv[\hha]]) \comp \psi(\gga) = \psi(\sigma[\sigma\inv[\hha]]) \comp \sigma = \psi[\hha] \comp \sigma = \tau \comp \sigma.$$
We deduce $\iota(\gg\hh) = (\gga \sigma\inv[\hha], \sigma\inv \comp \tau\inv)$. So, by~\eqref{E:YBSemi}, $\iota$ is a homomorphism.

Conversely, assume~\ITEM2 with $\nu$ and~$\psi$ ensuring that $\iota$ is an embedding. Then the above computation gives, for all $\gga, \hha$ in~$\NN^{(\SS)}$, the equality $\nu(\gga\hha) = \nu(\gga)\nu(\psi(\gga)[\hha])$, whence in particular $\nu(\gga\ss) = \nu(\gga)\psi(\gga)(\ss)$ for~$\gga$ in~$\NN^{(\SS)}$ and~$\ss$ in~$\SS$. So $\nu$ is a right-$I$-structure of shape~$(\NN^{(\SS)}, \SS)$ on~$\MM$, and \ITEM2 implies~\ITEM1
\end{proof}

\section{Coxeter-like groups}
\label{S:Coxeter}

We now use the RC-calculus of Section~\ref{S:RCCalculus} and the $I$-structure of Section~\ref{S:IStructure} and to solve what can be called the quest of a Coxeter group, namely constructing for every group associated with a finite RC-quasigroup a finite quotient that plays the role played by Coxeter groups in the case of spherical Artin--Tits groups. This finite quotient is not the group~$G_X^0$ of~\cite{Eti} in general.

In the case of Artin's braid group~$\BB_\nn$, the seminal example of a Garside group, the Garside structure $(\BP\nn, \Delta_\nn)$ is connected with the symmetric group~$\Sym_\nn$. Precisely, the group~$\BB_\nn$ and the monoid~$\BP\nn$ admit the (Artin) presentation
\begin{equation}
\label{E:BraidPres}
\bigg\langle \sig1, ..., \sig{n-1} \ \bigg\vert\ 
\begin{matrix}
\sig\ii \sig\jj = \sig\jj \sig\ii 
&\text{for} &\vert\ii-\jj\vert \ge 2\\
\sig\ii \sig \jj \sig\ii = \sig \jj \sig\ii \sig \jj 
&\text{for} &\vert\ii-\jj\vert = 1
\end{matrix}
\ \bigg\rangle,
\end{equation}
and $\Sym_\nn$ is the quotient of~$\BB_\nn$ obtained by adding the relations~$\sigg\ii2=1$ to~\eqref{E:BraidPres}. Then there exists a set-theoretic section~$\sigma : \Sym_\nn \to \BB_\nn$ whose image is the family of all divisors of~$\Delta_\nn$ in~$\BP\nn$, and a presentation both of the group~$\BB_\nn$ and the monoid~$\BP\nn$ in terms of the image of~$\sigma$ consists of all relations $\sigma(\ff)\sigma(\gg) = \sigma(\hh)$ with $\ff, \gg, \hh$ in~$\Sym_\nn$ satisfying $\LGG{}\ff + \LGG{}\gg = \LGG{}\hh$, where $\LGG{}\ff$ is the length of~$\ff$ (minimal number of adjacent transpositions in a decomposition of~$\ff$). Thus $\BB_\nn$ appears as a sort of unfolded version of~$\Sym_\nn$ where permutation length is used to get rid of torsion.

This is the situation we shall obtain in our current context. To make things precise, we introduce a notion of a Garside germ~\cite{Dif}. If a set~$\SS$ positively generates a group~$\GG$ (that is, every element of~$\GG$ can be expressed as a product of elements of~$\SS$), we denote by~$\LGG\SS\gg$ the length of a shortest $\SS$-decomposition of~$\gg$.

\begin{defi}
\label{D:Germ}
If $(\MM, \Delta)$ is a Garside monoid and $\GG$ is its group of fractions, a surjective homomorphism $\pi : \GG \to \GGb$ is said to \emph{provide a Garside germ for~$(\GG, \MM, \Delta)$} if there exists a map $\sigma: \GGb \to \MM$ such that $\pi \comp \sigma$ is the identity, the image of~$\sigma$ is the family of all divisors of~$\Delta$ in~$\MM$, and $\MM$ admits the presentation
\begin{equation}
\label{GermPres}
\langle\ \sigma(\GGb) \mid \{ \sigma(\ff)\sigma(\gg) = \sigma(\ff\gg) \mid \ff, \gg \in \GGb \text{ and } \LGG{\SSb}\ff + \LGG{\SSb}\gg\ = \LGG{\SSb}{\ff\gg}\}\ \rangle,
\end{equation}
where $\SSb$ is the image under~$\pi$ of the atom set of~$\MM$.
\end{defi}

In the context of Definition~\ref{D:Germ}, every element of~$\GG$ can be written as $\Delta^\pp \gg$ for some $\pp$ in~$\ZZZZ$ and some~$\gg$ in~$\MM$, implying that $\SSb$ positively generates~$\GGb$ and making $\LGG\SSb\gg$ meaningful. The term \emph{germ} stems from~\cite{DiM, Dif} where the germ derived from~$(\GGb, \SSb)$ is defined to be~$(\GGb, \OP)$ where $\OP$ is the partial binary operation so that $\ff \OP \gg = \hh$ holds for $\ff \gg = \hh$ with $\LGG{\SSb}\ff + \LGG{\SSb}\gg = \LGG{\SSb}\hh$. The monoid and the group defined by~\eqref{GermPres} are then said to be generated by the germ~$(\GGb, \OP)$. The situation described in Definition~\ref{D:Germ} corresponds to $(\GGb, \OP)$ being a germ generating~$\GG$. When it is so, the maps~$\pi$ and~$\sigma$ induce mutually inverse isomorphisms between the finite lattice made by the divisors of~$\Delta$ in~$\MM$ and $(\GGb, \leSb)$ where $\ff \leSb \gg$ means $\LGG{\SSb}\ff + \LGG{\SSb}{\ff\inv \gg} = \LGG{\SSb}\gg$. The Hasse diagram of these partial orders then coincides with the Cayley graph of the germ~$(\GGb, \OP)$ with respect to the generating set~$\SSb$. 

The above mentioned results for the braid group~$\BB_\nn$ and the symmetric group~$\Sym_\nn$ can be summarized into the statement that collapsing~$\sigg\ii2$ to~$1$ for every~$\ii$ provides a Garside germ for~$(\BB_\nn, \BP\nn, \Delta_\nn)$, with associated quotient~$\Sym_\nn$. It is known---see \cite{Bou} or \cite[Chapter~IX]{Garside}---that similar results hold for every Artin--Tits group of spherical type: if $(\WW, \SS)$ is a spherical Coxeter system (that is, $\WW$ and $\SS$ are finite), and $\GG$ and~$\MM$ are the associated Artin--Tits group and monoid, and $\Delta$ is the right-lcm of atoms in~$\MM$, then collapsing~$\ss^2$ to~$1$ for every~$\ss$ in~$\SS$ provides a Garside germ for~$(\GG, \MM, \Delta)$, with associated quotient~$\WW$. 

It is then natural to ask whether similar results hold for every Garside group, namely whether some finite quotient provides a Garside germ, that is, in other words, whether there exists an associated Coxeter-like group enjoying all the nice properties known in the Artin--Tits case. No general answer is known, but we shall establish a complete positive answer for structure groups of finite RC-quasigroups. Indeed, we define for every finite RC-quasigroup a notion of  \emph{class} and prove:

\begin{prop}
\label{P:Cox}
Assume that $(\SS, \op)$ is an RC-quasigroup of cardinal~$\nn$ and class~$\dd$. Let $\GG, \MM$ be the associated group and monoid, and $\Delta$ be the right-lcm of~$\SS$ in~$\MM$. Then collapsing $\ss^{[\dd]}$ to~$1$ in~$\GG$ for every~$\ss$ in~$\SS$, where $\ss^{[\dd]}$ stands for~$\SPol_\dd(\ss \wdots \ss)$, gives a finite group~$\GGb$ that provides a Garside germ for~$(\GG, \MM, \Delta^{\dd-1})$. The group~$\GGb$ has $\dd^\nn$ elements and the kernel of the projection is (isomorphic to)~$\ZZZZ^\nn$.
\end{prop}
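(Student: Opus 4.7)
The plan is to combine the $I$-structure $\nu\colon \NNNN^{(\SS)} \to \MM$ (Proposition~\ref{P:RCItype}) with its wreath-product incarnation (Proposition~\ref{P:Wreath}), realizing~$\GGb$ as the ``modular $I$-type'' counterpart in which $\NNNN$ is replaced by~$\Zd$. The key property of the class~$\dd$ that I would use is that the permutation map $\psi\colon \NNNN^{(\SS)} \to \Sym_\SS$ attached with $\nu$ via~\eqref{E:IStrBis} satisfies $\psi(\dd\,\ss) = \ID$ for every $\ss \in \SS$ and, more generally, $\psi(\gga + \dd\,\ss) = \psi(\gga)$ for all $\gga, \ss$, so that $\psi$ descends to a map $\Zd^\SS \to \Sym_\SS$. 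First I would show that the subgroup $\NN \le \GG$ generated by $\{\ss^{[\dd]} \mid \ss \in \SS\} = \{\nu(\dd\,\ss) \mid \ss \in \SS\}$ is a normal free abelian subgroup of rank~$\nn$: the product formula $\nu(\gga)\nu(\hha) = \nu(\gga + \psi(\gga)\inv[\hha])$ rewriting the left identity of~\eqref{E:Product}, combined with $\psi(\dd\,\ss) = \ID$, shows that the generators of~$\NN$ pairwise commute and gives the conjugation identity $\gg \cdot \ss^{[\dd]} \cdot \gg\inv = (\psi(\nu\inv(\gg))\inv(\ss))^{[\dd]}$, so $\NN$ is normal in $\GG$; and injectivity of $\nu$ on $\NNNN^{(\SS)}$, extended to $\dd\,\ZZZZ^{(\SS)}$ by torsion-freeness of $\GG$ (Corollary~\ref{C:Ore}), yields $\NN \cong \ZZZZ^\nn$.

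Then I would set $\GGb = \GG/\NN$ and compute $|\GGb| = \dd^\nn$ as follows. The wreath embedding of Proposition~\ref{P:Wreath} extends to an injection $\iota \colon \GG \hookrightarrow \ZZZZ \wr \Sym_\SS$ with first component~$\nu\inv$, and sends~$\NN$ onto $\dd\,\ZZZZ^\SS \times \{\ID\}$. By the defining property of~$\dd$, it descends to an injection $\bar\iota\colon \GGb \hookrightarrow \Zd \wr \Sym_\SS$ whose first component is a bijection onto $\Zd^\SS$; this realizes $\GGb$ as a modular $I$-type group of shape $(\Zd^\SS, \SS)$ and gives $|\GGb| = \dd^\nn$.

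For the Garside germ property, I would identify $\Delta = \nu(\underline\SS)$ where $\underline\SS = \sum_{\ss \in \SS} \ss$ using Lemma~\ref{L:Lcm}, then iterate the product formula to obtain $\Delta^\pp = \nu(\pp\,\underline\SS)$ (using that $\underline\SS$ is fixed by every permutation of~$\SS$). By Lemma~\ref{L:IStrDiv}\ITEM2, the divisors of~$\Delta^{\dd-1}$ in $\MM$ are exactly $\{\nu(\gga) \mid \gga \in [0,\dd-1]^\SS\}$, a set of size $\dd^\nn$ whose elements are in bijection with the classes of $\Zd^\SS \cong \GGb$; the section $\sigma\colon \GGb \to \MM$ then sends each class to~$\nu$ of its unique representative in the cube $[0,\dd-1]^\SS$, so that $\sigma(\GGb)$ is exactly the set of divisors of~$\Delta^{\dd-1}$. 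The presentation~\eqref{GermPres} then comes by matching the length $\LGG\SSb{\cdot}$ on $\GGb$ (read off as the $\ell^1$-norm of the cube representative) with length in $\MM$, and applying the general ``presentation by simples'' of a Garside monoid to $(\MM, \Delta^{\dd-1})$, whose divisors form a Garside family since they include~$\SS$ and are closed under right-lcm and right-complement.

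The main obstacle is this last step. One must verify that the divisors of the Garside power $\Delta^{\dd-1}$ indeed yield the presentation~\eqref{GermPres} in its precise form, which requires checking that $\LGG\SSb\ff + \LGG\SSb\gg = \LGG\SSb{\ff\gg}$ is equivalent via~$\sigma$ to ``the $I$-structure product $\nu\inv(\sigma(\ff)) + \psi(\nu\inv(\sigma(\ff)))\inv[\nu\inv(\sigma(\gg))]$ stays inside the cube $[0,\dd-1]^\SS$'', so that $\sigma(\ff)\sigma(\gg) = \sigma(\ff\gg)$ holds in~$\MM$ in those and only those cases. Otherwise the proof is a clean assembly of the $I$-structure technology (Section~\ref{S:IStructure}) with the Garside machinery (Section~\ref{S:StrMonoid}).
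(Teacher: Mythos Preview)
Your proposal follows essentially the same route as the paper: transport everything through the $I$-structure $\nu$ to reduce to the trivial statement for $\NNNN^\SS$ modulo $\dd$. The paper packages the passage to the quotient via a congruence $\equiv_0$ on $\ZZZZ^\SS$ (coordinates mod~$\dd$) and its $\nu$-image $\equiv$ on $\GG$, rather than via the wreath embedding, but this is the same content; your conjugation formula for normality of~$\NN$ is exactly what underlies the paper's ``compatibility with left-multiplication'' check. The section $\sigma$ via the cube $[0,\dd-1]^\SS$ and the identification of $\mathrm{Div}(\Delta^{\dd-1})$ with $\nu([0,\dd-1]^\SS)$ are identical in both.

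Your honestly flagged ``main obstacle'' is precisely the step the paper carries out, and your formulation is in fact sharper than the paper's: the condition for $\sigma(\ff)\sigma(\gg)=\sigma(\ff\gg)$ is indeed that $a + \psi(a)^{-1}[b]$ stays in the cube (with the twist), and this is equivalent to the additivity of $\ell^1$-lengths because permuting coordinates preserves the $\ell^1$-norm and mod-$\dd$ reduction strictly decreases it whenever any coordinate overflows. The paper then closes by observing that the original defining relations of~$\MM$ already appear among the germ relations (so the germ presentation suffices), which is what your appeal to the ``presentation by simples'' amounts to. One small point: you should justify that the $\SSb$-length on~$\GGb$ really equals the $\ell^1$-norm of the cube representative; this follows because any positive $\SS$-word for an element of~$\GGb$ lifts to~$\MM$, hence to an element of~$\NNNN^\SS$ congruent mod~$\dd$ to the cube representative, and the cube representative minimizes $\ell^1$-norm in its class.
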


In the framework of Proposition~\ref{P:Cox}, the finite group~$\GGb$ will be called the \emph{Coxeter-like} group associated with~$(\SS, \op)$ and~$\dd$.

The proof, which is not difficult, consists in using the $I$-structure to carry the results from the (trivial) case of~$\ZZZZ^\nn$ to the case of an arbitrary group of $I$-type. It will be decomposed into several easy steps. First we define the class. 

\begin{defi}
\label{D:Class}
An RC-quasigroup $(\SS, \op)$ is said to be \emph{of class~$\dd$} it it satisfies
\begin{equation}
\forall\ss, \tt \in \SS\ (\ \Pol_{\dd+1}(\ss \wdots \ss, \tt) = \tt\ ).
\end{equation}
\end{defi}

So an RC-quasigroup is of class~$1$ if $\ss \op \tt = \tt$ holds for all~$\ss, \tt$, and it is of class~$2$ if $(\ss \op \ss) \op (\ss \op \tt) = \tt$ holds for all~$\ss, \tt$. 

\begin{lemm}
\label{L:Class}
Every RC-quasigroup of cardinal~$\nn$ is of class~$\dd$ for some~$\dd < (\nn^2)!$.
\end{lemm}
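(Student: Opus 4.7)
The plan is to encode the iterated computation of $\Pol_{\kk+1}(\ss\wdots\ss,\tt)$ as the iteration of a single self-bijection of the finite set~$\SS\times\SS$ and then exploit the finiteness of the symmetric group. Writing $\gamma:\SS\to\SS$ for the squaring map $\gamma(\ss) = \ss\op\ss$, I would define $T : \SS\times\SS \to \SS\times\SS$ by $T(\ss,\tt) = (\gamma(\ss),\,\ss\op\tt)$.

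The first step is to check that $T$ is a bijection. Since $\SS$ is finite, injectivity of the left-translations $\tt\mapsto\ss\op\tt$ promotes them to bijections, so it suffices to show $\gamma$ is injective. By \cite[Theorem~2]{RumYB}, a finite RC-quasigroup is automatically bijective; hence the map $\Double:(\uu,\vv)\mapsto(\uu\op\vv,\vv\op\uu)$ is a bijection of $\SS\times\SS$. Since $\Double(\ss,\ss)=(\gamma(\ss),\gamma(\ss))$, injectivity of $\Double$ along the diagonal forces injectivity of~$\gamma$, hence bijectivity by finiteness. Combined with the bijectivity of each left-translation, this makes $T$ a bijection of~$\SS\times\SS$.

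Next, an easy induction on~$\kk$ based on Definition~\ref{D:Pol} yields $\Pol_\kk(\ss\wdots\ss) = \gamma^{\kk-1}(\ss)$ and, consequently,
\begin{equation*}
T^\kk(\ss,\tt) = \bigl(\gamma^\kk(\ss),\,\Pol_{\kk+1}(\ss\wdots\ss,\tt)\bigr).
\end{equation*}
Indeed, the recursive definition of $\Pol_{\kk+1}$ gives $\Pol_{\kk+1}(\ss\wdots\ss,\tt) = \gamma^{\kk-1}(\ss)\op\Pol_\kk(\ss\wdots\ss,\tt)$, which is exactly one further application of~$T$ with first coordinate~$\gamma^{\kk-1}(\ss)$.

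Finally, $T$ lies in the symmetric group $\Sym_{\SS\times\SS}$ of order $(\nn^2)!$, so its order~$\dd$ divides $(\nn^2)!$, and in particular $\dd < (\nn^2)!$ (no permutation of a set with at least two elements reaches the order $(\nn^2)!$). Reading $T^\dd = \ID$ in the second coordinate yields $\Pol_{\dd+1}(\ss\wdots\ss,\tt) = \tt$ for all $\ss, \tt\in\SS$, which is precisely the class-$\dd$ condition of Definition~\ref{D:Class}. The only delicate point is the appeal to Rump's theorem to secure that $\gamma$ is a bijection; everything else is a routine finite-group argument, and the bound $(\nn^2)!$ is certainly far from sharp.
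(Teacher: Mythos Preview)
Your argument is correct and is essentially the paper's own proof: your map~$T$ coincides with the map $\Phi:(\ss,\tt)\mapsto(\ss\op\ss,\ss\op\tt)$ used there, bijectivity is obtained in both cases via Rump's theorem applied to the diagonal, and the class bound then follows from the finiteness of~$\Sym_{\SS\times\SS}$. Your iterate formula $T^\kk(\ss,\tt)=(\gamma^\kk(\ss),\Pol_{\kk+1}(\ss,\ldots,\ss,\tt))$ is in fact the clean version of the induction the paper sketches.
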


\begin{proof}
Let $(\SS, \op)$ be a finite RC-quasigroup with cardinal~$\nn$. By Rump's theorem, $(\SS, \op)$ must be bijective, that is, the map $\Psi : (\ss, \tt) \mapsto (\ss \op \tt, \tt \op \ss)$ is bijective on~$\SS \times \SS$. Consider $\Phi : (\ss, \tt) \mapsto (\ss \op \ss, \ss \op \tt)$ on~$\SS^2$. Assume $(\ss, \tt) \not= (\ss', \tt')$. For $\ss \not = \ss'$, we have $\Psi(\ss, \ss) \not= \Psi(\ss', \ss')$, hence $\ss \op \ss \not= \ss' \op \ss'$, and $\Phi(\ss, \tt) \not= \Phi(\ss', \tt')$. For $\ss = \ss'$, we must have $\tt \not= \tt'$, whence $\ss \op \tt \not= \ss \op \tt'$ and, again, $\Phi(\ss, \tt) \not= \Phi(\ss', \tt')$ since left-translations of~$\op$ are injective. So $\Phi$ is injective, hence bijective on the finite set~$\SS \times \SS$. As $\SS \times \SS$ has cardinal~$\nn^2$, the order of~$\Phi$ in~$\Sym_{\SS \times \SS}$ is at most~$(\nn^2)!$. So there exists $\dd < (\nn^2)!$ such that $\Phi^{\dd+1}$ is the identity. Now, an easy induction gives $\Phi^\mm(\ss, \tt) = (\Pol_\mm(\ss \wdots \ss, \ss), \Pol_\mm(\ss \wdots \ss, \tt))$ for every~$\mm$. So $\Phi^{\dd+1} = \mathrm{id}$ implies $\Pol_{\dd+1}(\ss \wdots \ss, \tt) = \tt$ for all~$\ss, \tt$ in~$\SS$.
\end{proof}

There exist finite RC-quasigroups with an arbitrarily high minimal class. Indeed, let $\SS = \{\tta_1 \wdots \tta_\nn\}$ and $\ss \op \tt = \ff(\tt)$ where $\ff$ maps $\tta_\ii$ to~$\tta_{\ii+1 (\mathrm{mod}\,\nn)}$ for every~$\ii$. Then, for all~$\pp, \ii$ and $\ss_1 \wdots \ss_\pp$ in~$\SS$, we have $\Pol_{\pp+1}(\ss_1 \wdots \ss_\pp, \tta_\ii) = \tta_{\ii+\pp\, (\mathrm{mod}\,\nn)}$. Hence $(\SS, \op)$ is of class~$\dd$ if and only if $\dd$ is a multiple of~$\nn$, and the minimal class is~$\nn$.

We shall establish Proposition~\ref{P:Cox} using the $I$-structure on the monoid~$\MM$, which exists by Proposition~\ref{P:RCItype}. As in Section~\ref{S:IStructure}, the $I$-structure will be denoted by~$\nu$, and the associated map from~$\NNNN^\SS$ to~$\Sym_\SS$ as defined in~\eqref{E:IStrBis} will be denoted by~$\psi$. As mentioned at the end of Section~\ref{S:IStructure}, $\nu$ and $\psi$ respectively extend into a bijection from~$\ZZZZ^\SS$ to~$\GG$ and a map from~$\ZZZZ^\SS$ to~$\Sym_\SS$ that still satisfy~\eqref{E:IStr} and~\eqref{E:IStrBis}.

\begin{lemm}
\label{L:Frozen}
Assume that $(\SS, \op)$ is an RC-quasigroup of class~$\dd$ and $\MM$ is the associated monoid. For~$\ss$ in~$\SS$ and $\qq \ge 0$, let $\ss^{[\qq]} = \SPol_\qq(\ss \wdots \ss)$. Then $\nu(\ss^\dd \gga) = \ss^{[\dd]} \nu(\gga)$ holds for all $\ss$ in~$\SS$ and $\gga$ in~$\NNNN^{(\SS)}$. The permutation~$\psi(\ss^\dd)$ is the identity and, for all~$\ss, \tt$ in~$\SS$, the elements $\ss^{[\dd]}$ and $\tt^{[\dd]}$ commute in~$\MM$.
\end{lemm}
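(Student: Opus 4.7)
The plan is to prove the three assertions in sequence, with the identity $\psi(\ss^\dd) = \mathrm{id}_\SS$ as the key preliminary step from which everything else follows by direct application of the formulas already established in Lemma~\ref{E:NuRCBis}.

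First I would establish that $\psi(\ss^\dd)$ is the identity of~$\SS$. By the left formula in~\eqref{E:Nu}, for any~$\tt$ in~$\SS$ we have
\[
\psi(\ss^\dd)(\tt) = \psi(\underbrace{\ss \pdots \ss}_{\dd})(\tt) = \Pol_{\dd+1}(\ss \wdots \ss, \tt),
\]
and by Definition~\ref{D:Class}, the class-$\dd$ hypothesis gives $\Pol_{\dd+1}(\ss \wdots \ss, \tt) = \tt$. Since $\tt$ was arbitrary, $\psi(\ss^\dd) = \mathrm{id}_\SS$.

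Next, the formula $\nu(\ss^\dd \gga) = \ss^{[\dd]} \nu(\gga)$ is immediate from the left equality in~\eqref{E:Product} (applied with $\gga$ replaced by $\ss^\dd$ and $\hha$ by $\gga$):
\[
\nu(\ss^\dd \gga) = \nu(\ss^\dd) \cdot \nu(\psi(\ss^\dd)[\gga]) = \nu(\ss^\dd) \cdot \nu(\gga),
\]
since $\psi(\ss^\dd)$ acts trivially on~$\gga$. By the right equality in~\eqref{E:Nu}, $\nu(\ss^\dd) = \SPol_\dd(\ss \wdots \ss) = \ss^{[\dd]}$, which yields the claim.

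Finally, for the commutativity of $\ss^{[\dd]}$ and $\tt^{[\dd]}$ in~$\MM$, I would apply the preceding formula twice: taking $\gga = \tt^\dd$ gives $\nu(\ss^\dd \tt^\dd) = \ss^{[\dd]} \nu(\tt^\dd) = \ss^{[\dd]} \tt^{[\dd]}$, and exchanging the roles of $\ss$ and~$\tt$ yields $\nu(\tt^\dd \ss^\dd) = \tt^{[\dd]} \ss^{[\dd]}$. Since $\NNNN^{(\SS)}$ is commutative, $\ss^\dd \tt^\dd = \tt^\dd \ss^\dd$ there, so the two images under~$\nu$ are equal, giving $\ss^{[\dd]} \tt^{[\dd]} = \tt^{[\dd]} \ss^{[\dd]}$ in~$\MM$.

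There is no real obstacle here: the lemma is a direct consequence of the class-$\dd$ condition combined with the multiplicative identities of the $I$-structure from Lemma~\ref{E:NuRCBis}. The whole point is that once $\psi(\ss^\dd)$ is trivial, the map~$\nu$ behaves transparently on left-multiplication by~$\ss^\dd$, and the Abelian nature of the domain~$\NNNN^{(\SS)}$ transfers to commutativity of the ``twisted powers'' $\ss^{[\dd]}$ in the image~$\MM$.
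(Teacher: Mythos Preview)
Your proof is correct and uses essentially the same ingredients as the paper. The only difference is organizational: the paper first establishes $\nu(\ss^\dd\gga)=\ss^{[\dd]}\nu(\gga)$ by expanding $\SPol_{\dd+\qq}(\ss,\ldots,\ss,\tt_1,\ldots,\tt_\qq)$ directly via~\eqref{E:PolId2} and the class-$\dd$ identity, and then reads off $\psi(\ss^\dd)=\mathrm{id}$ as the special case $\gga=\tt$, whereas you reverse the order---proving $\psi(\ss^\dd)=\mathrm{id}$ first from~\eqref{E:Nu} and then invoking the packaged formula~\eqref{E:Product}. Both routes amount to the same computation; your version is slightly more streamlined since it cites~\eqref{E:Product} rather than reproducing its derivation.
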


\begin{proof}
Assume $\aa = \tt_1 \pdots \tt_\qq$ with $\tt_1 \wdots \tt_\qq$ in~$\SS$. Proposition~\ref{P:RCItype} implies 
\begin{align*}
\nu(\ss^\dd \gga) 
&= \SPol_{\dd+q}(\ss \wdots \ss, \tt_1 \wdots \tt_\qq)\\
&= \SPol_\dd(\ss \wdots \ss) \SPol_\qq(\Pol_{\dd+1}(\ss \wdots \ss, \tt_1) \wdots \Pol_{\dd+1}(\ss, \wdots \ss, \tt_\qq))\\
&= \SPol_\dd(\ss \wdots \ss) \SPol_\qq(\tt_1 \wdots \tt_\qq) = \nu(\ss^\dd) \nu(\tt_1 \pdots \tt_\qq) = \ss^{[\dd]} \nu(\gga),
\end{align*}
in which the second equality comes from expanding the terms and the third one from the assumption that $\MM$ is of class~$\dd$. Applying with $\gga = \tt$ in~$\SS$ and merging with $\nu(\ss^\dd \tt) = \nu(\ss^\dd) \, \psi(\ss^\dd)(\tt)$, we deduce that $\psi(\ss^\dd)$ is the identity. On the other hand, applying with $\gga = \tt^{[\dd]}$, we find $\ss^{[\dd]}\tt^{[\dd]} = \nu(\ss^\dd \tt^\dd) = \nu(\tt^\dd \ss^\dd) = \tt^{[\dd]} \ss^{[\dd]}$.
\end{proof}

\begin{lemm}
\label{L:Delta}
\ITEM1 Assume that $(\SS, \op)$ is a finite RC-quasigroup, $\MM$ is the associated monoid, and $\dd \ge 2$ holds. Let $\Deltaz = \prod_{\ss \in \SS}\ss$ in~$\NNNN^\SS$ and $\Delta_\dd = \nu(\Deltaz^{\dd-1})$. Then we have $\Delta_\dd = \Delta^{\dd-1}$ where $\Delta$ is the right-lcm of~$\SS$, and $\Delta_\dd$ is a Garside element in~$\MM$.

\ITEM2 If, moreover, $(\SS, \op)$ is of class~$\dd$, then $\Delta^\dd$ and $(\Delta_\dd)^\dd$ lie in the centre of~$\MM$.
\end{lemm}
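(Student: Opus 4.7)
The plan is to lift both parts of the lemma from the free Abelian monoid $\NNNN^\SS$ to $\MM$ via the $I$-structure~$\nu$ of Proposition~\ref{P:RCItype}, the crucial observation being that $\Deltaz$ (and more generally $\Deltaz^k$) is invariant under any componentwise permutation because all of its coefficients are equal.

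For~\ITEM1, I would first check $\nu(\Deltaz) = \Delta$: enumerating $\SS = \{\ss_1, \ldots, \ss_\nn\}$, Proposition~\ref{P:RCItype} gives $\nu(\Deltaz) = \SPol_\nn(\ss_1, \ldots, \ss_\nn)$, and Lemma~\ref{L:Lcm} identifies this with the right-lcm of the (pairwise distinct) atoms, namely $\Delta$. Then I would prove $\nu(\Deltaz^k) = \Delta^k$ for every~$k \ge 1$ by induction using the left equality of~\eqref{E:Product}, since
$$\nu(\Deltaz^k) = \nu(\Deltaz^{k-1})\,\nu(\psi(\Deltaz^{k-1})[\Deltaz]) = \Delta^{k-1}\cdot\nu(\Deltaz) = \Delta^k,$$
the middle step using that $\psi(\Deltaz^{k-1})[\Deltaz] = \Deltaz$ since a permutation merely reorders the summands of a constant-coefficient vector. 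Specialising $k = \dd-1$ gives $\Delta_\dd = \Delta^{\dd-1}$. By Proposition~\ref{P:GarMon}, $\Delta$ is a Garside element of~$\MM$, and a standard fact in Garside theory yields that every positive power of a Garside element is Garside (left- and right-divisors of $\Delta^{\dd-1}$ coincide, generate~$\MM$, and are finite in number, being iteratively built out of the simples).

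For~\ITEM2 I would first show $\psi(\Deltaz^\dd) = \mathrm{id}$. Lemma~\ref{L:Frozen} gives $\psi(\ss^\dd) = \mathrm{id}$ for each~$\ss \in \SS$, and iterating the right equality of~\eqref{E:Product}, namely $\psi(\gga\hha) = \psi(\psi(\gga)[\hha]) \comp \psi(\gga)$, with $\gga = \ss_1^\dd \pdots \ss_{k-1}^\dd$ (assumed trivial by induction) and $\hha = \ss_k^\dd$ gives $\psi(\gga\hha) = \psi(\ss_k^\dd) \comp \mathrm{id} = \mathrm{id}$, whence $\psi(\Deltaz^\dd) = \mathrm{id}$. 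Now for arbitrary $\hha \in \NNNN^\SS$, the left equality of~\eqref{E:Product} applied twice, combined with the commutativity of $\NNNN^\SS$ and the invariance $\psi(\hha)[\Deltaz^\dd] = \Deltaz^\dd$ (again because all coefficients of $\Deltaz^\dd$ are equal to~$\dd$), yields
$$\Delta^\dd\, \nu(\hha) = \nu(\Deltaz^\dd \hha) = \nu(\hha\, \Deltaz^\dd) = \nu(\hha)\, \Delta^\dd.$$
Since $\nu$ is onto~$\MM$, this shows $\Delta^\dd$ is central. Because $(\Delta_\dd)^\dd = \Delta^{\dd(\dd-1)}$ is a power of $\Delta^\dd$, it is central too.

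The only delicate point is keeping clear track of how the invariance of constant-coefficient vectors such as $\Deltaz$ and $\Deltaz^\dd$ under componentwise permutations interacts with the twisted product formulas of~\eqref{E:Product}; this is exactly the mechanism by which the purely diagonal class-$\dd$ identity $\psi(\ss^\dd)= \mathrm{id}$ propagates across $\Deltaz^\dd$ and produces a central element, and no deeper ingredient is needed.
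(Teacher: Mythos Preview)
Your proof is correct and follows essentially the same approach as the paper: both parts rest on the permutation-invariance of~$\Deltaz$ combined with the product formula~\eqref{E:Product}, and for~\ITEM2 your explicit computation of $\psi(\Deltaz^\dd)=\mathrm{id}$ is just a repackaging of the paper's direct use of $\nu(\ss^\dd\gga)=\ss^{[\dd]}\nu(\gga)$ from Lemma~\ref{L:Frozen}. The only cosmetic difference is that the paper checks centrality of~$\Delta^\dd$ against single atoms~$\tt\in\SS$ whereas you check it against arbitrary $\nu(\hha)$, but since $\SS$ generates~$\MM$ and $\nu$ is surjective these are equivalent.
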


\begin{proof}
\ITEM1 By Lemma~\ref{L:Lcm}, we have $\Delta = \SPol_\nn(\ss_1 \wdots \ss_\nn) = \nu(\Deltaz)$, where $(\ss_1 \wdots \ss_\nn)$ is any enumeration of~$\SS$. In other words, we have $\Delta = \Delta_2$. Now, we observe that $\ff[\Deltaz] = \Deltaz$ holds in~$\NNNN^\SS$ for every~$\ff$ in~$\Sym_\SS$ since every element of~$\SS$ occurs once in the definition of~$\Deltaz$. By~\eqref{E:Product}, we deduce 
\begin{equation}
\label{E:Delta}
\nu(\gga\Deltaz) = \nu(\gga) \nu(\psi(\gga)[\Deltaz]) = \nu(\gga) \nu(\Deltaz),
\end{equation}
whence $\nu(\Deltaz^\kk) = \nu(\Deltaz)^\kk$ for every~$\kk$ and, in particular, $\Delta_\dd = \nu(\Deltaz)^{\dd-1} = \Delta^{\dd-1}$. By Proposition~\ref{P:GarMon}, $\Delta$ is a Garside element in~$\MM$. It is standard that this implies that every power of~$\Delta$ is also a Garside element, hence, in particular, so is~$\Delta_\dd$.

\ITEM2 Assume now that $(\SS, \op)$ is of class~$\dd$. Let $\tt$ belong to~$\SS$. Then, by~\eqref{E:Delta}, we obtain $\nu(\tt \Deltaz^\dd) = \nu(\tt) \nu(\Deltaz^\dd) = \tt \Delta^\dd$. On the other hand, \eqref{E:Delta} and Lemma~\eqref{L:Frozen} give
\begin{equation}
\label{E:Delta1}
\Delta^\dd = \nu(\Deltaz^\dd) = \prod_{\ss \in \SS}\ss^{[\dd]} \text{\ and \ }\nu(\Deltaz^\dd \tt) = \prod_{\ss \in \SS}\ss^{[\dd]} \tt = \Delta^\dd \tt.
\end{equation}
Merging the values of~$\nu(\tt\Deltaz^\dd)$ and $\nu(\Deltaz^\dd \tt)$, we obtain $\tt \Delta^\dd = \Delta^\dd \tt$, so that $\Delta^\dd$, hence its power~$(\Delta_\dd)^\dd$ as well, lies in the centre of~$\MM$. 
\end{proof}

We can now introduce the equivalence relation on~$\ZZZZ^\SS$ that, when carried to~$\GG$, induces the expected quotient of~$\GG$ (and~$\MM)$. For~$\gga$ in~$\ZZZZ^\SS$, we denote by~$\Nb\ss\gga$ the (well-defined) algebraic number of letters~$\ss$ in any $\SS$-decomposition of~$\gga$.

\begin{lemm}
\label{L:Congruence}
Assume that $(\SS, \op)$ is an RC-quasigroup of class~$\dd$ and $\MM$ and $\GG$ are the associated monoid and group.  For~$\gga, \gga'$ in~$\ZZZZ^\SS$, write $\gga \equivz \gga'$ if $\Nb\ss\gga = \Nb\ss{\gga'} \pmod\dd$ holds for every~$\ss$ in~$\SS$.

\ITEM1 For~$\gg, \gg'$ in~$\MM$, declare $\gg \equiv \gg'$ for $\nu\inv(\gg) \equivz \nu\inv(\gg')$. Then $\equiv$ is an equivalence relation on~$\MM$ that is compatible with left- and right-multiplication. The class of~$1$ is the Abelian submonoid~$\MM_1$ of~$\MM$ generated by the elements~$\ss^{[\dd]}$ with~$\ss$ in~$\SS$.

\ITEM2 For~$\gg, \gg'$ in~$\GG$, declare that $\gg \equiv \gg'$ holds if there exist~$\hh, \hh'$ in~$\MM$ and $\rr, \rr'$ in~$\ZZZZ$ satisfying $\gg = \Delta^{\dd\rr} \hh$, $\gg' = \Delta^{\dd\rr'} \hh'$, and $\hh \equiv \hh'$. Then $\equiv$ is a congruence on~$\GG$, and the kernel of the projection of~$\GG$ to~$\GG{/}{\equiv}$ is the group of fractions of~$\MM_1$.
\end{lemm}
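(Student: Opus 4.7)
The plan is to carry the evident additive congruence $\equivz$ on $\NNNN^\SS$ (and $\ZZZZ^\SS$) across the bijection $\nu$ of Proposition~\ref{P:RCItype}, using the wreath-product description of Proposition~\ref{P:Wreath} to handle multiplication. Two technical inputs drive the argument: first, the map $\psi : \NNNN^\SS \to \Sym_\SS$ is constant on $\equivz$-classes; second, Lemmas~\ref{L:Frozen} and~\ref{L:Delta} identify multiplication by elements of $\dd\NNNN^\SS$ with multiplication by the pairwise commuting factors $\ss^{[\dd]}$, whose product $\Delta^\dd$ lies in the centre of $\MM$.

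For part~\ITEM1, $\equiv$ is by construction the pullback of $\equivz$ under the bijection $\nu$, so it is immediately an equivalence relation. The $\psi$-invariance I would prove by feeding $\hha = \ss^\dd$ into the right formula in~\eqref{E:Product}: since $\psi(\gga)[\ss^\dd] = \psi(\gga)(\ss)^\dd$ is again in $\dd\NNNN^\SS$ and $\psi(\ss^\dd) = \ID$ by Lemma~\ref{L:Frozen}, one obtains $\psi(\gga\ss^\dd) = \psi(\gga)$; iterating gives invariance under multiplication by any element of $\dd\NNNN^\SS$, hence on $\equivz$-classes (two $\equivz$-congruent elements admit a common $\NNNN^\SS$-multiple in $\dd\NNNN^\SS$). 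Compatibility of $\equiv$ with left- and right-multiplication follows at once from the wreath-product formula $\nu(\gga)\nu(\hha) = \nu(\gga\,\psi(\gga)\inv[\hha])$: permuting coordinates preserves letter counts, and $\psi(\gga) = \psi(\gga')$ as soon as $\gga \equivz \gga'$. To identify the class of $1$, observe that $\gg \equiv 1$ amounts to $\nu\inv(\gg) \in \dd\NNNN^\SS$; iterating $\nu(\ss^\dd \hha) = \ss^{[\dd]}\nu(\hha)$ from Lemma~\ref{L:Frozen} expresses each such $\gg$ as a product of the pairwise commuting elements $\ss^{[\dd]}$, which is precisely $\MM_1$.

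For part~\ITEM2, I would first observe that, since $\Delta$ is a Garside element in $\MM$, every $\gg \in \GG$ divides some power of $\Delta$; absorbing a further multiple of $\dd$ shows that every $\gg$ admits a representation $\gg = \Delta^{\dd r}\hh$ with $r \in \ZZZZ$ and $\hh \in \MM$. If two such representations $\Delta^{\dd r}\hh = \Delta^{\dd r_1}\hh_1$ of the same element coincide (wlog $r \le r_1$), left-cancellation gives $\hh = \Delta^{\dd(r_1-r)}\hh_1$; as $\Delta^\dd = \prod_\ss \ss^{[\dd]}$ lies in $\MM_1$ by~\eqref{E:Delta1}, part~\ITEM1 and compatibility give $\Delta^{\dd(r_1-r)} \equiv 1$, whence $\hh \equiv \hh_1$ in $\MM$. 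This shows that $\equiv$ is well-defined on $\GG$; symmetry, reflexivity and transitivity then descend from part~\ITEM1, and compatibility with the product in $\GG$ follows by using the centrality of $\Delta^\dd$ (Lemma~\ref{L:Delta}, extended from $\MM$ to $\GG$ because $\GG$ is the group of fractions of $\MM$) to push all powers of $\Delta^\dd$ to one side of any product and applying part~\ITEM1. This well-definedness step is the main---though mild---obstacle in the proof.

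For the kernel, let $\GG_1$ denote the group of fractions of $\MM_1$ inside $\GG$ (it exists because $\MM_1$ is Abelian). If $\gg \equiv 1$ in $\GG$, then $\gg = \Delta^{\dd r}\hh$ with $\hh \in \MM_1$ and $\Delta^\dd \in \MM_1$, so $\gg \in \GG_1$. Conversely, given $\aa, \bb \in \MM_1$, write $\bb = \prod_\ss \ss^{[\dd] c_\ss}$ and choose $s \ge \max_\ss c_\ss$; the identity $\bb \cdot \prod_\ss \ss^{[\dd](s - c_\ss)} = \Delta^{\dd s}$ (from~\eqref{E:Delta1} and the commutativity of the $\ss^{[\dd]}$) together with left-cancellativity in $\MM$ yields $\bb\inv \Delta^{\dd s} = \prod_\ss \ss^{[\dd](s - c_\ss)} \in \MM_1$. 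Consequently $\aa\bb\inv = \Delta^{-\dd s}\bigl(\aa \cdot \prod_\ss \ss^{[\dd](s - c_\ss)}\bigr)$ has the form $\Delta^{\dd r}\hh$ with $\hh \in \MM_1$, so $\aa\bb\inv \equiv 1$, and $\GG_1$ is exactly the kernel.
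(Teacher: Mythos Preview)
Your proof is correct and follows essentially the same route as the paper: pull back $\equivz$ along $\nu$, use Lemma~\ref{L:Frozen} to see that $\psi$ is constant on $\equivz$-classes, deduce compatibility from~\eqref{E:Product}, and handle the group case via the centrality of $\Delta^\dd$ from Lemma~\ref{L:Delta}. The only cosmetic differences are that the paper reduces $\psi$-invariance to the single step $\gga' = \ss^\dd\gga$ (since $\equivz$ is generated by such moves) rather than passing through a common multiple, and it checks compatibility only against generators of~$\SS$, whereas you invoke the full wreath-product formula to treat an arbitrary~$\hh$ at once; your kernel computation is also more explicit than the paper's one-line remark.
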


\begin{proof}
\ITEM1 As $\nu$ is bijective, carrying the equivalence relation~$\equivz$ of~$\NNNN^\SS$ to~$\MM$ yields an equivalence relation on~$\MM$. Assume $\gg \equiv \gg'$. Let $\gga = \nu\inv(\gg)$ and $\gga' =\nobreak \nu\inv(\gg')$. Without loss of generality, we may assume $\gga' = \gga \ss^\dd = \ss^\dd \gga$ for some $\ss$ in~$\SS$. Applying~\eqref{E:Product} and Lemma~\ref{L:Frozen}, we obtain $\psi(\gga') = \psi(\psi(\ss^\dd)[\gga] \comp \psi(\ss^\dd) = \psi(\gga)$. Let~$\tt$ belong to~$\SS$. Using~\eqref{E:Product} again, we deduce
\begin{multline*}
\HS{5}\gg \cdot \psi(\gga)(\tt) = \nu(\gga) \cdot \psi(\gga)(\tt) = \nu(\gga\tt) \\ \equiv \nu(\gga'\tt) = \nu(\gga') \cdot \psi(\gga')(\tt) = \nu(\gga') \cdot\psi(\gga)(\tt) = \gg' \cdot \psi(\gga)(\tt).\HS{5}
\end{multline*}
As $\psi(\gga)(\tt)$ takes every value in~$\SS$ when $\tt$ ranges over~$\SS$, we deduce that $\equiv$ is compatible with right-multiplication. 
On the other hand, $\gga \equivz \gga'$ implies $\ff[\gga] \equivz\nobreak \ff[\gga']$ for every permutation~$\ff$ of~$\SS$. Let~$\tt$ belong to~$\SS$. Always by~\eqref{E:Product}, we obtain 
$$\tt \cdot \gg = \tt \cdot \nu(\gga) = \nu(\tt \cdot \psi(\tt)^{-1}[\gga]) \equiv \nu(\tt \cdot \psi(\tt)^{-1}[\gga']) = \tt \cdot \nu(\gga') = \tt \cdot \gg',$$
and $\equiv$ is compatible with left-multiplication by~$\SS$. 

The $\equivz$-class of~$1$ in~$\NNNN^\SS$ is the free Abelian submonoid generated by the elements~$\ss^\dd$ with~$\ss$ in~$\SS$. The $\equiv$-class of~$1$ in~$\MM$ consists of the $\nu$-image of the products of such elements~$\ss^\dd$. By Lemma~\ref{L:Frozen}, the latter are the products of elements~$\ss^{[\dd]}$.

\ITEM2 First, $\equiv$ is well-defined. As $\Delta^\dd$ is a Garside element in~$\MM$, every element of~$G$ admits an expression $\Delta^{\dd\rr} \hh$ with $\rr$ in~$\ZZZZ$ and $\hh$ in~$\MM$. This expression is not unique, but, if we have $\gg = \Delta^{\dd\rr} \hh = \Delta^{\dd\rr_1} \hh_1$ with, say, $\rr_1 < \rr$, then, as $\MM$ is left-cancellative, we must have $\hh_1 = \Delta^{\dd(\rr-\rr_1)} \hh$, whence $\hh_1 \equiv \hh$ by~\eqref{E:Delta1}. So, for every~$\hh'$ in~$\MM$, the relations $\hh \equiv \hh'$ and $\hh_1 \equiv \hh'$ are equivalent.

That $\equiv$ is a equivalence relation is easy. Its compatibility with multiplication on~$\GG$ follows from the compatibility on~$\MM$ and the fact that $\Delta^\dd$ is central in~$\GG$. 

Finally, the $\equiv$-class of~$1$ in~$\GG$ consists of all elements~$\Delta^{\dd\rr} \hh$ with~$\hh$ in~$\MM_1$. As $\Delta^\dd$ belongs to~$\MM_1$, this is the group of fractions of~$\MM_1$ in~$\GG$, hence the free Abelian subgroup of~$\GG$ generated by the elements~$\ss^{[\dd]}$ with $\ss$ in~$\SS$.
\end{proof}

\begin{proof}[Proof of Proposition~\ref{P:Cox}]
Let $\GGb$ be the quotient-group~$\GG{/}{\equiv}$. By Lemma~\ref{L:Congruence}, the kernel of the projection of~$\GG$ onto~$\GGb$ is a free Abelian group of rank~$\nn$, hence it is isomorphic to~$\ZZZZ^\SS$. The cardinality of~$\GGb$ is the number of $\equiv$-classes in~$\GG$. As every element of~$\GG$ is $\equiv$-equivalent to an element of~$\MM$, this number is also the number of $\equiv$-classes in~$\MM$, hence the number~$\dd^\nn$ of $\equivz$-classes in~$\NNNN^\SS$, and we have $\GGb = \MM{/}{\equiv}$.

By definition, $\ss^{[\dd]} \equiv 1$ holds for~$\ss$ in~$\SS$. Conversely, the congruence~$\equivz$ on~$\ZZZZ^\SS$ is generated by the pairs $(\ss^\dd, 1)$ with $\ss$ in~$\SS$, hence the congruence~$\equiv$ on~$\GG$ is generated by the pairs $(\ss^{[\dd]}, 1)$ with~$\ss$ in~$\SS$. Hence a presentation of~$\GGb$ is obtained by adding to the presentation~\eqref{E:Str} of~$\GG$ and of~$\MM$ the $\nn$~relations $\ss^{[\dd]} = 1$ with~$\ss$ in~$\SS$. 

By construction, the bijection~$\nu$ is compatible with the congruences~$\equivz$ on~$\ZZZZ^\SS$ and~$\equiv$ on~$\GG$, so it induces a bijection~$\nub$ of~$\NNNN^\SS{/}{\equivz}$, which is $(\Zd)^\SS$, onto~$\MM{/}{\equiv}$, which is~$\GGb$, providing a commutative diagram
\vspace{-8mm}
\begin{equation}
\label{E:Projection}
\VR(6,8)\begin{picture}(22,18)(0,5)
\rput[bc](0,0){\rnode{00}{$(\Zd)^\SS$}}
\rput[bc](25,0){\rnode{10}{$\GGb$\rlap{\ .}}}
\rput[c](0,12){\rnode{01}{$\NNNN^\SS$}}
\rput[c](25,12){\rnode{11}{$\phantom{(}\MM\phantom{)}$}}
\ncline[nodesep=2pt,linewidth=0.5pt]{->}{01}{11}\naput{$\nu$}
\ncline[nodesep=2pt,linewidth=0.5pt]{->}{01}{00}\tlput{$\piz$}
\ncline[nodesep=2pt,linewidth=0.5pt]{->}{11}{10}\trput{$\pi$}
\ncline[nodesep=2pt,linewidth=0.5pt]{->}{00}{10}\nbput{$\nub$}
\end{picture}
\end{equation}
Now, let $\sigmaz$ be the section of~$\piz$ from $(\Zd)^\SS$ to~$\NNNN^\SS$ that maps every $\equivz$-class to the unique $\nn$-tuple of $\{0 \wdots \dd-1\}^\SS$ that lies in that class, and let $\sigma: \GGb \to \MM$ be defined by $\sigma(\gg) = \nu(\sigmaz(\nub\inv(\gg))$. Then, for every~$\gg$ in~$\GGb$, we obtain
$$\pi(\sigma(\gg)) = \pi(\nu(\sigmaz(\nub\inv(\gg)) = \nub(\piz(\sigmaz(\nub\inv(\gg)) = \gg$$
since $\sigmaz$ is a section of~$\piz$. Hence $\sigma$ is a section of~$\pi$. Next, by construction, the image of~$\GGb$ under~$\sigma$ is the image under~$\nu$ of $\{0 \wdots \dd-1\}^\SS$, hence the image under~$\nu$ of the family of all left-divisors of~$\Deltaz^{\dd-1}$ in~$\NNNN^\SS$, hence the family of all left-divisors of~$\Delta^{\dd-1}$, that is, of~$\Delta_\dd$, in~$\MM$.

Finally, the relation $\sigma(\ff)\sigma(\gg) = \sigma(\ff\gg)$ holds in~$\MM$ if and only if the relation $\sigmaz(\nub(\ff)) \sigmaz(\nub(\gg)) = \sigmaz(\nub(\ff\gg))$ holds in~$\NNNN^\SS$, hence if and only if, for every~$\ii$, the sum of the $\ii$th coordinates of~$\nub(\ff)$ and~$\nub(\gg)$ does not exceed~$\dd-1$. This happens if and only if $\LGG\SS{\nub(\ff)} + \LGG\SS{\nub(\gg)} = \LGG\SS{\nub(\ff\gg)}$ holds in~$(\Zd)^\SS$, hence if and only if $\LGG\SSb{\ff} + \LGG\SSb{\gg} = \LGG\SSb{\ff\gg}$ holds in~$\GGb$. By construction, the family~$\SS$ is included in the image of~$\sigma$, and all length two relations of~\eqref{E:Str} belong to the previous list of relations, hence the latter make a presentation of~$\MM$. This completes the proof.
\end{proof}

\begin{exam}
\label{X:Cox}
For an RC-quasigroup of class~1, that is, satisfying $\ss \op \tt = \tt$ for all~$\ss, \tt$, the group~$\GG$ is a free Abelian group, $\GGb$ is trivial, and Proposition~\ref{P:Cox} here reduces to the isomorphism $\ZZZZ^\SS \cong \GG$.

For class~2, that is, when $(\ss \op \ss) \op (\ss \op \tt) = \tt$ holds for all~$\ss, \tt$, the element~$\Delta_\dd$ is the right-lcm of~$\SS$, it has $2^\nn$~divisors which are the right-lcms of subsets of~$\SS$, and the group~$\GGb$ is the order~$2^\nn$ quotient of~$\GG$ obtained by adding the relations $\ss(\ss \op \ss) = 1$. For instance, in the case of $\{\tta, \ttb\}$ with $\ss \op \tt = \ff(\tt)$, $\ff : \tta \mapsto \ttb \mapsto \tta$, the group~$\GG$ has the presentation $\PRES{\tta, \ttb}{\tta^2 = \ttb^2}$, the relations $\tta^{[2]} = \ttb^{[2]} = 1$ both amount to $\tta\ttb = 1$, and the associated Coxeter-like group~$\GGb$ is a cyclic group of order~$4$.

For class~3, let us consider as in Example~\ref{X:Lattice} the RC-quasigroup $\{\tta, \ttb, \ttc\}$ with $\ss \op \tt =\nobreak \ff(\tt)$ and $\ff : \tta {\mapsto} \ttb {\mapsto} \ttc {\mapsto} \tta$. The presentation of the associated group~$\GG$ is $\PRES{\tta, \ttb, \ttc}{\tta\ttc = \ttb^2, \ttb\tta = \ttc^2, \ttc\ttb = \tta^2}$. With the same notation as above, the smallest Garside element~$\Delta$ is~$\tta^3$. As the class of~$(\SS, \op)$ is~$3$, we consider here $\Delta_3 = \Delta^2 = \nobreak \tta^6$. The lattice $\Div(\Delta_3)$ has 27~elements, its Hasse diagram is the cube shown in Figure~\ref{F:Cube}. The latter is also the Cayley graph of the germ derived from~$(\GGb, \{\tta, \ttb, \ttc\})$, that is, the restriction of the Cayley graph of~$\GGb$ to the partial product of the germ. Adding to the above presentation the relations $\ss^{[3]} = 1$, that is, $\ss (\ss \op \ss )((\ss \op \ss )\op(\ss \op \ss )) = 1$, namely $\tta\ttb\ttc = \ttb\ttc\tta = \ttc\tta\ttb = 1$, here reducing to $\tta\ttb\ttc = 1$, yields for~$\GGb$ the presentation $\PRES{\tta, \ttb, \ttc}{\tta\ttc =\nobreak \ttb^2, \ttb\tta = \ttc^2, \ttc\ttb = \tta^2, \tta\ttb\ttc = 1}$. One can check that other presentations of~$\GGb$ are $\PRES{\tta, \ttb}{\tta = \ttb^2\tta\ttb, \ttb = \tta\ttb\tta^2}$ and $\PRES{\tta, \ttb}{\tta = \ttb^2\tta\ttb, \tta^3 = \ttb^3}$.
\end{exam}

\begin{figure}[htb]
$$\begin{picture}(65,53)(0,3)
\psset{unit=1.2mm}
\psset{linewidth=0.8pt}
\psset{arrowsize=3pt}
\psset{dotsep=1.5pt}
\psset{dash=2pt 1pt}
\psset{doublesep=0.5pt}
\psset{nodesep=5pt}
\newpsstyle{a}{linecolor=red}
\newpsstyle{b}{linecolor=gray}
\newpsstyle{c}{linecolor=black}
\rput(16,0){\rnode[c]{1}{$1$}}
\rput(8,8){\rnode[c]{a}{$\tta$}}
\rput(16,8){\rnode[c]{b}{$\ttb$}}
\rput(32,8){\rnode[c]{c}{$\ttc$}}
\rput(0,16){\rnode[c]{ab}{$\tta\ttb$}}
\rput(8,16){\rnode[c]{bb}{$\ttb^2$}}
\rput(16,16){\rnode[c]{bc}{$\ttb\ttc$}}
\rput(24,16){\rnode[c]{aa}{$\tta^2$}}
\rput(32,16){\rnode[c]{cc}{$\ttc^2$}}
\rput(48,16){\rnode[c]{ca}{$\ttc\tta$}}
\rput(0,24){\rnode[c]{acc}{$\tta\ttc^2$}}
\rput(8,24){\rnode[c]{bba}{$\ttb^2\tta$}}
\rput(16,24){\rnode[c]{abb}{$\tta\ttb^2$}}
\rput(24,24){\rnode[c]{aaa}{$\tta^3$}}
\rput(32,24){\rnode[c]{baa}{$\ttb\tta^2$}}
\rput(40,24){\rnode[c]{aab}{$\tta^2\ttb$}}
\rput(48,24){\rnode[c]{caa}{$\ttc\tta^2$}}
\rput(0,32){\rnode[c]{bbaa}{$\ttb^2\tta^2$}}
\rput(16,32){\rnode[c]{aaaa}{$\tta^4$}}
\rput(24,32){\rnode[c]{bbbb}{$\ttb^4$}}
\rput(32,32){\rnode[c]{aacc}{$\tta^2\ttc^2$}}
\rput(40,32){\rnode[c]{cccc}{$\ttc^4$}}
\rput(48,32){\rnode[c]{ccbb}{$\ttc^2\ttb^2$}}
\rput(16,40){\rnode[c]{bbbbb}{$\ttb^5$}}
\rput(32,40){\rnode[c]{aaaaa}{$\tta^5$}}
\rput(40,40){\rnode[c]{ccccc}{$\ttc^5$}}
\rput(32,48){\rnode[c]{Delta}{$\Delta$}}
\psset{nodesep=0.4mm}
\CArrow(1,c)
\AArrow(c,ca)
\AArrow(a,aa)
\BArrow(aa,aab)
\BArrow(ab,abb)
\CArrow(abb,aacc)
\AArrow(1,a)
\BArrow(a,ab)
\BArrow(c,aa)
\CArrow(aa,abb)
\CArrow(ca,aab)
\AArrow(aab,aacc)

\AArrow(ab,acc)
\BArrow(acc,bbaa)
\BArrow(abb,aaaa)
\CArrow(aaaa,bbbbb)
\CArrow(aacc,aaaaa)
\AArrow(aaaaa,Delta)
\CArrow(acc,aaaa)
\AArrow(aaaa,aaaaa)
\AArrow(bbaa,bbbbb)
\BArrow(bbbbb,Delta)

\AArrow(ca,caa)
\BArrow(caa,ccbb)
\BArrow(aab,cccc)
\CArrow(cccc,ccccc)
\AArrow(caa,cccc)
\BArrow(cccc,aaaaa)
\BArrow(ccbb,ccccc)
\CArrow(ccccc,Delta)

\AArrow(b,cc)
\BArrow(cc,caa)
\BArrow(bb,aaa)
\CArrow(aaa,cccc)
\BArrow(b,bb)
\CArrow(bb,acc)
\CArrow(cc,aaa)
\AArrow(aaa,aaaa)

\CArrow(bba,bbbb)
\AArrow(bbbb,ccccc)
\AArrow(bb,bba)
\AArrow(aa,aaa)
\BArrow(aaa,bbbb)

\BArrow(1,b)
\CArrow(a,bb)
\CArrow(b,bc)
\CArrow(c,cc)
\CArrow(bc,bba)
\AArrow(cc,baa)
\AArrow(bba,bbaa)
\AArrow(baa,bbbb)
\BArrow(bc,baa)
\CArrow(baa,ccbb)
\BArrow(bbbb,bbbbb)
\end{picture}$$
\caption[]{\sf\smaller The Coxeter-like group associated with the RC-quasigroup of Example~\ref{X:Lattice}; the $27$-vertex cube shown above is the lattice of divisors of~$\tta^6$ in the associated monoid~$\MM$, the Hasse diagram of the weak order on the finite group~$\GGb$ with respect to the generators~$\tta, \ttb, \ttc$, and the Cayley graph of the germ derived from~$\GGb$ with respect to the previous generators. The complete Cayley graph of~$\GGb$ would be obtained by adding arrows that correspond to cases when the $\SS$-lengths do not add, for instance $\tta\ttb \cdot \ttc = 1$, as when one transforms a cube into a 3-torus by gluing opposite faces.}
\label{F:Cube}
\end{figure}
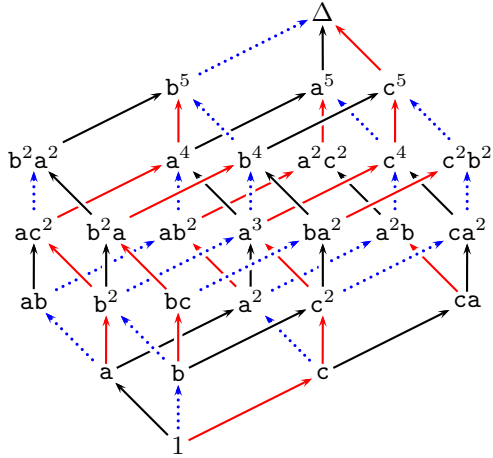

\begin{rema}
\ITEM1 General results by Gromov imply that every finitely generated group~$\GG$ whose Cayley graph is (quasi)-isometric to that of~$\ZZZZ^\nn$ must be virtually~$\ZZZZ^\nn$, that is, there exists an exact sequence $1 \to \ZZZZ^\nn \to \GG \to \GGb \to 1$ with $\GGb$ finite, see~\cite{BrGe}. By definition, an $I$-structure is an isometry as above, and the existence of a finite quotient~$\GGb$ as in Proposition~\ref{P:Cox} can be seen as a concrete instance of the above (abstract) result.

\ITEM2\label{R:IYB}
In~\cite{Eti}, one considers the quotient $G_{\!\SS}^0 = \GG{/}\Gamma$, where $\Gamma$ consists of the elements whose action on~$\SS$ is trivial. By Lemma~\ref{L:Frozen}, $\psi(\ss^\dd)$ is the identity permutation for every~$\ss$ in~$\SS$, meaning that every element~$\ss^{[\dd]}$ belongs to~$\Gamma$. Hence $G_{\!\SS}^0$ is a quotient of our current group~$\GGb$, a proper one in general: for the groups of Example~\ref{X:Cox}, in the class~$2$ example, the orders of~$\GG_\SS^0$ and~$\GGb$ are~$2$ vs.\ $4$ ($\tta\ttb$ lies in~$\Gamma$ but it is not trivial in~$\GGb$), in the class~3 example, the orders are $3$ vs.\ $27$.
\end{rema}

The question naturally arises of characterizing Coxeter-like groups associated with finite RC-quasigroups (hence, equivalently, with solutions of~YBE) as described in Proposition~\ref{P:Cox}. We show now that, exactly as structure groups of solutions of~YBE are those groups that admit an $I$-structure, their Coxeter-like quotients are those finite groups that admit the counterpart of an $I$-structure where some cyclic group~$\Zd$ replaces~$\ZZZZ$, that is, what was called an $I$-structure of shape~$(\Zd)^\SS$ in Definition~\ref{D:IStr}. 

\begin{prop}
\label{P:CharCox}
For every finite group~$\WW$, the following are equivalent:

\ITEM1 There exists a finite RC-quasigroup~$(\SS, \op)$ of class~Ê$\dd$ such that $\WW$ is the Coxeter-like group associated with~$(\SS, \op)$ and~$\dd$.

\ITEM2 The group~$\WW$ admits a right-$I$-structure of shape~$(\Zd)^\SS$.
\end{prop}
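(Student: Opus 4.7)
For~$(1) \Rightarrow (2)$, the construction in Proposition~\ref{P:Cox} essentially already provides the desired $I$-structure: the bijection~$\nub$ appearing in the commutative diagram~\eqref{E:Projection} will serve as a right-$I$-structure of shape~$(\Zd)^\SS$ on~$\GGb = \WW$. The conditions $\nub(1) = 1$ and $\nub(\ss) = \ss$ for~$\ss \in \SS$ follow from the commutativity of~\eqref{E:Projection} combined with the corresponding properties of~$\nu$. For the $I$-structure relation $\{\nub(\gga\ss) \mid \ss \in \SS\} = \{\nub(\gga)\ss \mid \ss \in \SS\}$ at~$\gga \in (\Zd)^\SS$, one lifts~$\gga$ to a preimage $\gga_0 \in \NNNN^\SS$ under~$\piz$ and projects the corresponding relation for~$\nu$ at~$\gga_0$ through the diagram.

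For~$(2) \Rightarrow (1)$, starting from $\nub : (\Zd)^\SS \to \WW$, I define $\psi : (\Zd)^\SS \to \Sym_\SS$ by $\nub(\gga\ss) = \nub(\gga)\,\psi(\gga)(\ss)$ and put $\ss \op \tt = \psi(\ss)(\tt)$ on~$\SS$, in direct analogy with Lemma~\ref{L:NuRC}. The plan is then to establish successively: (a)~$(\SS, \op)$ is an RC-quasigroup; (b)~$(\SS, \op)$ has class~$\dd$; (c)~$\WW$ is isomorphic to the Coxeter-like group associated with~$(\SS, \op)$ and~$\dd$.

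The hard part is step~(a), because condition~\eqref{E:Condition}, on which the proof of Lemma~\ref{L:NuRC} rests, fails for $\MMb = (\Zd)^\SS$ when~$\dd = 2$: for $\ss \ne \tt$ in~$\SS$, the relation $\ss + \tt' = \tt + \ss'$ then admits the ``bad'' extra solution $\ss' = \tt$, $\tt' = \ss$. This alternative can however be ruled out contextually. In the key step of the proof of Lemma~\ref{L:NuRC}, one cancels $\nub(\gga)$ in $\nub(\gga + \ss + \tt) = \nub(\gga + \tt + \ss)$ to obtain $\aa\bb = \cc\dd$ in~$\WW$ with $\aa = \psi(\gga)(\ss)$, etc., and $\aa \ne \cc$. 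The bad alternative would force $\bb = \aa \op \aa = \psi(\aa)(\aa)$, hence $\aa\bb = \nub(2\aa) = \nub(0) = 1$; this would give $\nub(\gga) = \nub(\gga + \ss + \tt)$, hence $\ss + \tt = 0$ in~$(\Zd)^\SS$, and finally $\ss = \tt$---a contradiction. So only the good alternative occurs, relation~\eqref{E:TheRel2} holds, and the remainder of the proof of Lemma~\ref{L:NuRC} (including its extension to the case $\ss = \tt$) proceeds verbatim, yielding the RC-law.

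Once~(a) is in place, Lemma~\ref{E:NuRCBis} applies---its proof rests on the already-verified~\eqref{E:TheRel2}---and gives $\psi(\ss^\dd)(\tt) = \Pol_{\dd+1}(\ss, \wdots, \ss, \tt)$; since $\ss^\dd = 0$ in~$(\Zd)^\SS$, the left side is $\psi(0)(\tt) = \tt$, establishing~(b). For~(c), let $\GG$, $\MM$, $\GGb$ denote the structure group, structure monoid, and Coxeter-like group of~$(\SS, \op)$ at level~$\dd$. Each defining relation in the presentation of~$\GGb$ given by Proposition~\ref{P:Cox}---namely $\ss(\ss \op \tt) = \tt(\tt \op \ss)$ for $\ss \ne \tt$ in~$\SS$, and $\ss^{[\dd]} = 1$ for~$\ss \in \SS$---holds in~$\WW$ under the identification via~$\nub$: the former because $\nub(\ss + \tt) = \nub(\tt + \ss)$ in~$\WW$, the latter because, by Lemma~\ref{E:NuRCBis} applied in~$\WW$, $\ss^{[\dd]} = \SPol_\dd(\ss, \wdots, \ss) = \nub(\dd\ss) = \nub(0) = 1$. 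This produces a surjective group homomorphism $\GGb \to \WW$, which must be an isomorphism because both groups have the same cardinality~$\dd^{|\SS|}$.
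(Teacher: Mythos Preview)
Your argument follows the same overall strategy as the paper's: for $(1)\Rightarrow(2)$ you use the induced bijection~$\nub$ from the commutative square~\eqref{E:Projection}, and for $(2)\Rightarrow(1)$ you extract an RC-quasigroup from the $I$-structure via Lemma~\ref{L:NuRC}, check it has class~$\dd$ using~\eqref{E:Nu}, and then compare cardinalities to get the isomorphism $\GGb\cong\WW$. All of this matches the paper.

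Where you diverge is in your treatment of the case~$\dd=2$, and here you are actually more careful than the paper. The paper simply asserts that $(\Zd)^\SS$ satisfies condition~\eqref{E:Condition} and invokes Lemmas~\ref{L:NuRC} and~\ref{E:NuRCBis} directly. You correctly observe that~\eqref{E:Condition} fails for $(\ZZZZ/2\ZZZZ)^\SS$: with $\ss\ne\tt$ the pair $\ss'=\tt$, $\tt'=\ss$ gives $\ss\tt'=\tt\ss'=0$. Your workaround---showing that in the specific context where the claim of Lemma~\ref{L:NuRC} is applied, the ``bad'' alternative $b=a\op a$, $d=c\op c$ would force $ab=\nub(2a)=\nub(0)=1$, hence $\nub(\gga+\ss+\tt)=\nub(\gga)$, hence $\ss+\tt=0$, contradicting $\ss\ne\tt$---is correct and fills this gap. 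Once~\eqref{E:TheRel2} is secured in this way, the rest of Lemma~\ref{L:NuRC} and all of Lemma~\ref{E:NuRCBis} go through as you say, since their proofs depend only on~\eqref{E:TheRel2} and not on~\eqref{E:Condition} directly. So your proof is complete, and in fact repairs a small oversight in the published argument.
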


\begin{proof}
Assume that $(\SS, \op)$ is a finite RC-quasigroup and $\GG$ and $\MM$ are the associated group and monoid. Then $\MM$ admits an $I$-structure~$\nu$. The congruences of Lemma~\ref{L:Congruence}, $\equivz$ on~$\ZZZZ^\SS$ and~$\equiv$ on~$\GG$, are compatible and $\nu$ induces a well-defined map~$\nub$ from~$(\Zd)^\SS$ to~$\GG{/}{\equiv}$ that makes~\eqref{E:Projection} commutative. Then $\nub$ is bijective by construction, and projecting \eqref{E:IStrBis} for~$\nu$ gives its counterpart for~$\nub$. So $\nub$ is a right-$I$-structure of shape~$(\Zd)^\SS$ for~$\GG{/}{\equiv}$, and \ITEM1 implies~Ê\ITEM2.

Conversely, assume that $\WW$ admits a right-$I$-structure of shape~$(\Zd)^\SS$. As $(\Zd)^\SS$ satisfies~\eqref{E:Condition}, Lemmas~\ref{L:NuRC} and~\ref{E:NuRCBis} apply. Thus putting $\ss \op \tt = \psi(\ss)(\tt)$ yields an RC-quasigroup and \eqref{E:Nu} is valid. Let $\ss$ and~$\tt$ belong to~$\SS$. In~$(\Zd)^\SS$, we have $\ss^\dd = 1$, whence $\psi(\ss^\dd) = \psi(1) = \ID_\SS$. Applying~\eqref{E:Nu} (left), we deduce $\Pol_{\dd+1}(\ss \wdots \ss, \tt) = \psi(\ss^\dd)(\tt) = \tt$. Hence the RC-quasigroup $(\SS, \op)$ is of class~$\dd$. 

Now, let $\GG$ be the group associated with~$(\SS, \op)$, and let $\GGb$ be the associated finite quotient as provided by Proposition~\ref{P:Cox}. The group~$\GG$ is generated by~$\SS$ and it admits a presentation consisting of all relations $\ss(\ss \op \tt) = \tt(\tt \op \ss)$ with $\ss, \tt$ in~$\SS$. By assumption, the group~$\WW$ is generated by~$\SS$, and we observed that the relations $\ss(\ss \op \tt) = \tt(\tt \op \ss)$ with $\ss, \tt$ in~$\SS$ are satisfied in~$\WW$ since, by definition, they are equivalent to $\nu(\ss\tt) = \nu(\tt\ss)$ and $(\Zd)^\SS$ is Abelian. Hence there exists a surjective homomorphism~$\theta$ from~$\GG$ to~$\WW$ that is the identity on~$\SS$. 

$$\begin{picture}(50,20)(-3,-7)
\rput[bc](0,0){\rnode{00}{$(\Zd)^\SS$}}
\rput[bc](25,0){\rnode{10}{$\phantom{(}\GGb$\phantom{)}}}
\rput[c](0,12){\rnode{01}{$\ZZZZ^\SS$}}
\rput[c](25,12){\rnode{11}{$\phantom{(}\GG\phantom{)}$}}
\rput[c](50,0){\rnode{20}{$\phantom{(}\WW\phantom{)}$}}
\ncline[nodesep=2pt,linewidth=0.5pt]{->}{01}{11}\naput{$\widehat\nu$}
\ncline[nodesep=2pt,linewidth=0.5pt]{->}{01}{00}\tlput{$\piz$}
\ncline[nodesep=2pt,linewidth=0.5pt]{->}{11}{10}\trput{$\pi$}
\ncline[nodesep=2pt,linewidth=0.5pt]{->}{00}{10}
\ncline[nodesep=2pt,linewidth=0.5pt,style=exist]{->}{10}{20}\naput{$\overline\theta$}
\ncline[nodesep=2pt,linewidth=0.5pt,style=exist]{->}{11}{20}\naput{$\theta$}
\pscurve[linewidth=0.5pt]{->}(5,-2)(25,-5)(47,-1.5)
\put(24,-7.5){$\nu$}
\end{picture}$$

Next, $\GGb$ is the quotient of~$\GG$ obtained by adding the relations $\ss^{[\dd]} = 1$, that is, $\SPol_\dd(\ss \wdots \ss) = 1$ for~$\ss$ in~$\SS$. Now, in~$\WW$, we have $\SPol_\dd(\ss \wdots \ss) = \nu(\ss^\dd)$ by~\eqref{E:Nu}; but $\ss^\dd = 1$ holds in~$(\Zd)^\SS$, so, in~$\WW$, we have $\SPol_\dd(\ss \wdots \ss) = 1$ for every~$\ss$ in~$\SS$. Thus the surjective homomorphism~$\theta$ factorizes through~$\GGb$, yielding a surjective homomorphism~$\overline\theta$ from~$\GGb$ to~$\WW$. As both $\GGb$ and~$\WW$ have cardinality~$\dd^\nn$, the homomorphism~$\overline\theta$ must be an isomorphism. Hence $\WW$ is the Coxeter-like quotient of the group associated with~$(\SS, \op)$ and~$\dd$. So \ITEM2 implies~\ITEM1.
\end{proof}

Using Proposition~\ref{P:Wreath}, we deduce

\begin{coro}
\label{C:CoxWreath}
Every finite group that is the Coxeter-like group associated with an RC-quasigroup of size~$\nn$ and class~$\dd$ embeds into the wreath product $(\Zd) \wr \Sym_\nn$ so that the first component is a bijection.
\end{coro}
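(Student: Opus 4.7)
The proof is a direct splicing together of the two preceding results, so the plan is largely bookkeeping rather than new ideas.

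First I would invoke Proposition~\ref{P:CharCox}: if $\WW$ is the Coxeter-like group associated with a finite RC-quasigroup $(\SS, \op)$ of size~$\nn$ and class~$\dd$, then the equivalence \ITEM1$\Leftrightarrow$\ITEM2 in that proposition supplies a right-$I$-structure $\nub : (\Zd)^\SS \to \WW$ of shape~$((\Zd)^\SS, \SS)$ on~$\WW$. (Explicitly, $\nub$ is the map induced by the $I$-structure $\nu$ on the structure monoid after passing to the quotient congruences $\equivz$ and $\equiv$; its bijectivity and the permutation condition \eqref{E:IStrBis} are exactly what Proposition~\ref{P:CharCox} records.)

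Next I would apply Proposition~\ref{P:Wreath} with $\MM = \WW$ and $\NN = \Zd$. This is legitimate because $\WW$, being a group, is automatically left-cancellative, and the right-$I$-structure of shape $(\Zd)^\SS$ just produced fulfils hypothesis~\ITEM1 of that proposition. The equivalence with~\ITEM2 then yields the map $\psi : (\Zd)^\SS \to \Sym_\SS$ (namely $\psi(\gga)(\ss) = \Pol_\pp(\ldots, \ss)$ as computed in Lemma~\ref{E:NuRCBis}) such that
$$\iota : \WW \to \Zd \wr \Sym_\SS, \qquad \gg \mapsto \bigl(\nub\inv(\gg),\ \psi(\nub\inv(\gg))\inv\bigr),$$
is an injective group homomorphism whose first component is a bijection. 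Finally, choosing any enumeration $\SS = \{\tts_1, \ldots, \tts_\nn\}$ identifies $\Sym_\SS$ with $\Sym_\nn$ and carries $\iota$ into an embedding $\WW \hookrightarrow (\Zd) \wr \Sym_\nn$ with the required properties.

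The main obstacle, if any, is really just checking the hypotheses line up — that Proposition~\ref{P:Wreath} was stated generally enough to accept a finite group in place of a monoid (it was, since left-cancellativity is all that is used) and that the $I$-structure from Proposition~\ref{P:CharCox} has precisely the shape $(\Zd)^\SS$ called for. Both ingredients are already in place, so no further computation is needed; in particular one does not need to revisit the RC-calculus of Section~\ref{S:RCCalculus}.
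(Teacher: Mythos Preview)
Your proposal is correct and follows essentially the same approach as the paper: the paper simply says ``Using Proposition~\ref{P:Wreath}, we deduce'' the corollary, implicitly relying on the right-$I$-structure of shape~$(\Zd)^\SS$ supplied by Proposition~\ref{P:CharCox}, which is exactly what you spell out.
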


\begin{exam}
For the last group~$\GGb$ of Example~\ref{X:Cox}, owing to the fact that the permutations of~$\{1, 2, 3\}$ associated with $\tta, \ttb, \ttc$ all are the cycle $\ff : 1 \mapsto 2 \mapsto 3 \mapsto 1$, one obtains a description as the family of the 27 tuples $(\pp, \qq, \rr; \ff^{\pp+\qq+\rr})$ with $\pp, \qq, \rr$ in~$\ZZZZ{/}3\ZZZZ$, the product of triples     being twisted by the action of~$\ff^{\pp + \qq + \rr}$ on positions.
\end{exam}

Corollary~\ref{C:CoxWreath} implies that the Coxeter-like groups~$\GGb$ associated with finite RC-quasigroups are $IG$-monoids in the sense of~\cite{GoJe}. It follows that they inherit all properties of such monoids established there, in particular in terms of the derived algebras~$K[\GGb]$ and their prime ideals.

We conclude with linear representations of the groups~$\GG$ and~$\GGb$ associated with a finite RC-quasigroup. Here again, we use the $I$-structure to carry the results from the trivial case of a free Abelian group to the group of an arbitrary RC-quasigroup.

\begin{prop}
\label{P:LinRep}
Assume that $(\SS, \op)$ is an RC-quasi\-group of cardinal~$\nn$ and class~$\dd$. Let $\GG$ be the associated group. For~$\ss$ the $\ii$th element of~$\SS$, define
\begin{equation}
\label{E:LinRep}
\linrep(\ss) = \linrepz(\ss) \PP_{\psi(\ss)},
\end{equation}
where $\linrepz(\ss)$ is the diagonal $\nn \times \nn$-matrix with diagonal entries $(1 \wdots 1, \qq, 1 \wdots 1)$, $\qq$ at position~$\ii$ and $\PP_{\psi(\ss)}$ is the permutation matrix associated with~$\psi(\ss): \tt \mapsto \ss\op\tt$. Then $\linrep$ provides a faithful representation of~$\GG$ into $\GL(\nn, \QQQQ[\qq, \qq\inv])$; specializing at $\qq = \exp(2i\pi/\dd)$ gives a faithful representation of the associated group~$\GGb$.
\end{prop}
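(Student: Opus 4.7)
The plan is to realize $\linrep$ as the composition of the wreath-product embedding from Proposition~\ref{P:Wreath} with a natural matrix representation of the wreath product $\ZZZZ \wr \Sym_\SS$ into $\GL(\nn, \QQQQ[\qq, \qq\inv])$. First, I extend the right-$I$-structure $\nu$ of Proposition~\ref{P:RCItype} from $\NNNN^\SS$ to a bijection $\ZZZZ^\SS \to \GG$, together with the associated map $\psi: \ZZZZ^\SS \to \Sym_\SS$ (as noted in the discussion preceding Proposition~\ref{P:CharCox}); running the proof of Proposition~\ref{P:Wreath} with $\ZZZZ$ in place of $\NNNN$ then yields an injective group homomorphism $\iota: \GG \to \ZZZZ \wr \Sym_\SS$ sending $\gg$ to $(\nu\inv(\gg), \psi(\nu\inv(\gg))\inv)$.

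Next, I define $\rho: \ZZZZ \wr \Sym_\SS \to \GL(\nn, \QQQQ[\qq, \qq\inv])$ by mapping $(\aa, \sigma)$ to the product of the diagonal matrix $D_\aa$ with entries $\qq^{\aa_1}, \ldots, \qq^{\aa_\nn}$ and the permutation matrix associated with $\sigma$ (in the convention matching the paper's $\PP$). That $\rho$ is a homomorphism reduces to the standard identity that conjugating $D_\bb$ by a permutation matrix permutes its diagonal entries. A direct computation on a generator $\ss$ at position $\ii$ gives $\iota(\ss) = (\ee_\ii, \psi(\ss)\inv)$, whence $\rho(\iota(\ss))$ is exactly $\linrepz(\ss) \PP_{\psi(\ss)}$, matching~\eqref{E:LinRep}. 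Hence $\linrep = \rho \comp \iota$ defines a homomorphism $\GG \to \GL(\nn, \QQQQ[\qq, \qq\inv])$, and its faithfulness follows from injectivity of both $\iota$ (Proposition~\ref{P:Wreath}) and $\rho$: the matrix $\rho(\aa, \sigma)$ has exactly one non-zero entry per column, whose position encodes $\sigma$ and whose value is a Laurent monomial in $\qq$ encoding the corresponding coordinate of~$\aa$, and distinct monomials are linearly independent in $\QQQQ[\qq, \qq\inv]$.

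For the specialization at $\qq = \zeta := \exp(2\ii\pi/\dd)$: by Lemma~\ref{L:Frozen}, $\nu\inv(\ss^{[\dd]}) = \dd\,\ee_\ii$ and $\psi(\ss^\dd) = \ID_\SS$, so $\iota(\ss^{[\dd]}) = (\dd\,\ee_\ii, \ID_\SS)$ is sent by the specialized $\rho$ to the identity matrix because $\zeta^\dd = 1$. By Proposition~\ref{P:Cox} the elements $\ss^{[\dd]}$ generate the kernel of $\GG \twoheadrightarrow \GGb$, so the specialization factors through $\GGb$. Faithfulness on $\GGb$ follows from the same column analysis with $\zeta$ in place of $\qq$: if $\linrep(\gg)|_{\qq=\zeta} = \ID$ and $\iota(\gg) = (\aa, \sigma)$, then $\sigma = \ID_\SS$ and $\zeta^{\aa_\ii} = 1$ for every $\ii$, so $\aa \in \dd\ZZZZ^\SS$, which by the commutative diagram~\eqref{E:Projection} (extended from $\NNNN^\SS$ to $\ZZZZ^\SS$) means $\gg$ lies in the kernel of $\GG \to \GGb$, i.e., projects to the identity in $\GGb$.

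The main obstacle is administrative rather than conceptual: pinning down the permutation-matrix and wreath-product conventions so that the composition $\rho \comp \iota$ produces $\PP_{\psi(\ss)}$ rather than $\PP_{\psi(\ss)\inv}$ in the second factor, and verifying that the extension of the proof of Proposition~\ref{P:Wreath} from $\NNNN$ to $\ZZZZ$ goes through without change. Once the conventions are aligned, every step is a routine calculation, with the $I$-structure of Section~\ref{S:IStructure} and the Coxeter-like quotient of Section~\ref{S:Coxeter} doing all the structural work.
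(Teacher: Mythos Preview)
Your proposal is correct and follows essentially the same route as the paper: both establish the formula $\linrep(\nu(\gga)) = \linrepz(\gga)\,\PP_{\psi(\gga)}$ for all~$\gga$ in~$\ZZZZ^\SS$, with the homomorphism property reducing to the conjugation identity between diagonal and permutation matrices together with~\eqref{E:Product}. The only organizational difference is that you package the argument as the composition of the wreath-product embedding~$\iota$ of Proposition~\ref{P:Wreath} (extended from~$\NNNN$ to~$\ZZZZ$) with an explicit matrix realization~$\rho$ of $\ZZZZ \wr \Sym_\SS$, whereas the paper carries out the equivalent inductive computation on~$\nu(\gga\ss)$ directly without naming~$\iota$ and~$\rho$ separately; your factored presentation is arguably cleaner, and the convention issue you flag about $\PP_{\psi(\ss)}$ versus $\PP_{\psi(\ss)\inv}$ is indeed purely notational.
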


\begin{proof}
First, $\linrepz$ defines a faithful representation of~$\ZZZZ^\SS$ into $\GL(\nn, \QQQQ[\qq, \qq\inv])$ since $\linrepz(\prod\tts_\ii^{\ee_\ii})$ is the diagonal matrix with diagonal $(\qq^{\ee_1} \wdots \qq^{\ee_\nn})$, and specializing at $\qq = \exp(2i\pi/\dd)$ gives a faithful representation of~$(\Zd)^\SS$.

In order to carry the results to~$\GG$ and~$\GGb$, we show that \eqref{E:LinRep} extends into $\linrep(\nu(\aa)) = \linrepz(\aa) \PP_{\psi(\aa)}$ for every~$\aa$ in~$\ZZZZ^{\SS}$. As we are working with invertible matrices, it is enough to consider multiplication by one element of~$\SS$ (division automatically follows) and, therefore, the point for an induction is to go from~$\aa$ to~$\aa\ss$. We find
\begin{align*}
\linrep(\nu(\gga \ss)) 
&= \linrep(\nu(\gga) \psi(\gga)(\ss)) 
&\text{by \eqref{E:IStrBis}}\\
&= \linrep(\nu(\gga)) \, \linrep(\psi(\gga)(\ss))
&\text{by definition}\\
&= \linrepz(\gga) \, \PP_{\psi(\gga)} \, \linrepz(\psi(\gga)(\ss)) \, \PP_{\psi(\psi(\gga)(\ss))}
&\text{by induction hypothesis}\\
&= \linrepz(\gga) \, \linrepz(\ss) \, \PP_{\psi(\gga)} \, \PP_{\psi(\psi(\gga)(\ss))}
&\text{by conjugating by $\PP_{\psi(\gga)}$}\\
&= \linrepz(\gga \ss) \, \PP_{\psi(\psi(\gga)(\ss)) \comp \psi(\gga)}
&\text{by definition}\\
&= \linrepz(\gga \ss) \, \PP_{\psi(\gga\ss)}.
&\hspace{30mm}\text{by~\eqref{E:Product}}
\end{align*}
It is then clear that $\linrep$ is well-defined on the monoid associated with~$(\SS, \op)$, hence on its group of fractions, which is~$\GG$. For faithfulness, $\linrepz(\nu\inv(\gg))$ is the unique diagonal matrix obtained from~$\linrep(\gg)$ by right-multiplication by a permutation matrix, so $\linrep(\gg)$ determines~$\nu\inv(\gg)$, hence~$\gg$. 

Finally, specializing at a $\dd$th root of unity induces a well-defined faithful representation of the finite group~$\GGb$ since, by definition, $\gg$ and~$\gg'$ represent the same element of~$\GGb$ if and only if $\nu\inv(\gg)$ and $\nu\inv(\gg')$ are $\equivz$-equivalent, hence if and only if $\linrepz(\nu\inv(\gg))_{\qq = \exp(2i\pi/\dd)}$ and $\linrepz(\nu\inv(\gg'))_{\qq = \exp(2i\pi/\dd)}$ are equal.
\end{proof}

\begin{exam}
Coming back to the last group in Example~\ref{X:Cox}, the permutations $\psi(\tta)$, $\psi(\ttb)$, $\psi(\ttc)$ all are the $3$-cycle $(1, 2, 3)$, and we find the explicit representation
$$\linrep(\tta) = \begin{pmatrix}0&\qq&0\\0&0&1\\1&0&0\end{pmatrix}, \quad \linrep(\ttb) = \begin{pmatrix}0&1&0\\0&0&\qq\\1&0&0\end{pmatrix}, \quad \linrep(\ttc) = \begin{pmatrix}0&1&0\\0&0&1\\\qq&0&0\end{pmatrix}.$$
Specializing at $\qq = \exp(2i\pi/3)$ gives a faithful unitary representation of the associated $27$-element group~$\GGb$. Using the latter, it is easy to check for instance that $\GGb$ has exponent~$9$: $\tta, \ttb, \ttc$ have order~$9$, and all elements have order~$1$, $3$, or~$9$.
\end{exam}

\begin{coro}
\label{C:Isometries}
Every finite group that is the Coxeter-like group associated with an RC-quasigroup of size~$\nn$ can be realized as a group of isometries in an $\nn$-dimensional Hermitian space.
\end{coro}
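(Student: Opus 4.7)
The plan is to read off the corollary directly from Proposition~\ref{P:LinRep} by observing that the specialization at a $\dd$th root of unity produces unitary matrices.

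More precisely, let $(\SS, \op)$ be a finite RC-quasigroup of cardinal~$\nn$ and class~$\dd$, and let $\GGb$ be the associated Coxeter-like group. Proposition~\ref{P:LinRep} provides a faithful representation $\linrep$ of $\GGb$ in $\GL(\nn, \CCCC)$, specialized at $\qq = \exp(2i\pi/\dd)$, with $\linrep(\ss) = \linrepz(\ss) \PP_{\psi(\ss)}$ for $\ss$ in~$\SS$. The key observation is that, with this specialization, the matrix $\linrepz(\ss)$ is diagonal with all diagonal entries of modulus~$1$ (either~$1$ or $\exp(2i\pi/\dd)$), hence unitary with respect to the standard Hermitian form on~$\CCCC^\nn$. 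The matrix $\PP_{\psi(\ss)}$ is a permutation matrix, hence also unitary. Therefore the product $\linrep(\ss)$ is unitary for every $\ss$ in~$\SS$.

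Since the generators~$\SS$ act by isometries of the standard Hermitian inner product and the unitary group~$U(\nn)$ is a subgroup of $\GL(\nn, \CCCC)$, the image $\linrep(\GGb)$ is contained in~$U(\nn)$. Combined with the faithfulness statement from Proposition~\ref{P:LinRep}, this realizes $\GGb$ as a group of isometries of the $\nn$-dimensional Hermitian space~$\CCCC^\nn$.

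There is essentially no obstacle: all the work has already been packaged in Proposition~\ref{P:LinRep}. The only point to verify is the elementary remark that a product of a diagonal matrix with unit-modulus entries and a permutation matrix is unitary, which follows immediately from the fact that the unitary group is closed under multiplication.
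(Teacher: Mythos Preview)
Your proof is correct and follows essentially the same approach as the paper: both arguments invoke the faithful representation of Proposition~\ref{P:LinRep} and observe that each generator~$\linrep(\ss)$ is a product of two unitary matrices, hence unitary. The only difference is terminological---the paper phrases this by saying that $\linrepz(\ss)$ specialized at $\qq = \exp(2i\pi/\dd)$ is an order~$\dd$ complex reflection and that permutation matrices are products of hyperplane symmetries, whereas you argue directly that diagonal matrices with unit-modulus entries and permutation matrices are unitary.
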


Indeed, the matrices $\linrepz(\ss)_{\qq = \exp(2i\pi/\dd)}$ correspond to order~$\dd$ complex reflections, whereas permutation matrices are finite products of hyperplane symmetries. 

We shall not go farther in the description of the Coxeter-like groups~$\GGb$. The analogy with Coxeter groups suggests to further investigate their geometric properties: according to Proposition~\ref{P:CharCox} and Figure~\ref{F:Cube}, the Cayley graph of the Coxeter-like group associated with an RC-quasigroup of cardinal~$\nn$ and class~$\dd$ is drawn on an $\nn$-torus and corresponds to a tiling of the torus (hence to a periodic tiling of~$\RRRR^\nn$) by $\dd^\nn$~copies of a single cubical pattern. On the other hand, as $\GGb$ characterizes the RC-quasigroup it comes from, classifying all finite groups (or all periodic tilings of~$\RRRR^\nn$) that occur in this way is \textit{a priori} not easier than classifying all solutions of~YBE, hence presumably (very) difficult.

We conclude with a speculative idea. So far, Propositions~\ref{P:Charac} and \ref{P:RCItype} provide the only known characterization of a relatively large family of Garside groups: it identifies Garside groups that admit a presentation of a certain form with those that admit an $I$-structure, hence resemble a free Abelian group. One might replace free Abelian groups with other groups~$\Gamma$ and consider as in~\cite{GoJe} those groups~$\GG$ that admit a ``$I$-structure of shape~$\Gamma$'' in the sense that their Cayley graph is that of~$\Gamma$ up to relabeling the edges. Should this approach make sense here, a natural problem would be to characterize those Garside groups that admit an $I$-structure of shape~$\Gamma$ for various reference (Garside) groups~$\Gamma$, maybe a first step toward a global classification of Garside groups which, so far, remains out of reach.

\end{document}